\documentclass[twoside, 10pt]{article}
\usepackage{amssymb, amsmath, mathrsfs, amsthm}
\usepackage{graphicx}
\usepackage{color}
\usepackage{float}


\newcommand{\bd}{\begin{description}}
\newcommand{\ed}{\end{description}}
\newcommand{\bi}{\begin{itemize}}
\newcommand{\ei}{\end{itemize}}
\newcommand{\be}{\begin{enumerate}}
\newcommand{\ee}{\end{enumerate}}
\newcommand{\beq}{\begin{equation}}
\newcommand{\eeq}{\end{equation}}
\newcommand{\beqs}{\begin{eqnarray*}}
\newcommand{\eeqs}{\end{eqnarray*}}

\definecolor{DarkGreen}{rgb}{0.2, 0.6, 0.3}

\newcommand{\labelz}[1]{\label{#1}}


\newtheorem{theorem}{Theorem}
\newtheorem{conjecture}{Conjecture}

\newtheorem{lemma}{Lemma}

\newtheorem{case}{Case}
\newtheorem{subcase}{Subcase}[case]
\newtheorem{subsubcase}{Subcase}[subcase]
\newtheorem{claim}{Claim}
\newtheorem{fact}{Fact}

\setcounter{case}{0} \setcounter{claim}{0}

\begin{document}
\title{\bf Gallai-Ramsey number for $K_{5}$}

\author{Colton Magnant\footnote{Department of Mathematics, Clayton State University, Morrow, GA 30260 USA. {\tt dr.colton.magnant@gmail.com}} \footnote{Center for Mathematics and Interdisciplinary Sciences of Qinghai Province, Xining, Qinghai 810008, China.}, Ingo Schiermeyer\footnote{Technische Universit{\"a}t Bergakademie Freiberg, Institut f{\"u}r Diskrete Mathematik und Algebra, 09596 Freiberg, Germany. {\tt Ingo.Schiermeyer@tu-freiberg.de}}}

\maketitle

\begin{abstract}
Given a graph $H$, the $k$-colored Gallai Ramsey number $gr_{k}(K_{3} : H)$ is defined to be the minimum integer $n$ such that every $k$-coloring of the edges of the complete graph on $n$ vertices contains either a rainbow triangle or a monochromatic copy of $H$. Fox et al.~[J.~Fox, A.~Grinshpun, and J.~Pach. The Erd{\H o}s-Hajnal conjecture for rainbow triangles. J.~Combin.~Theory Ser.~B, 111:75-125, 2015.] conjectured the value of the Gallai Ramsey numbers for complete graphs. Recently, this conjecture has been verified for the first open case, when $H = K_{4}$.

In this paper we attack the next case, when $H = K_5$. Surprisingly it turns out, that the validity of the conjecture depends upon the (yet unknown) value of the Ramsey number $R(5,5)$. It is known that $43 \leq R(5,5) \leq 48$ and conjectured that $R(5,5)=43$~[B.D.~McKay and S.P.~Radziszowski. Subgraph counting identities and Ramsey numbers. J.~Combin.~Theory Ser.~B, 69:193-209, 1997]. If $44 \leq R(5,5) \leq 48$, then Fox et al.'s conjecture is true and we present a complete proof. If, however, $R(5,5)=43$, then Fox et al.'s conjecture is false, meaning that at least one of these two conjectures must be false. For the case when $R(5, 5) = 43$, we show lower and upper bounds for the Gallai Ramsey number $gr_{k}(K_{3} : K_5)$.
\end{abstract}




\section{Introduction}

Given a graph $G$ and a positive integer $k$, the $k$-color Ramsey number $r_{k}(G)$ is the minimum number of vertices $n$ such that every $k$-coloring of the edges of $K_{N}$ for $N \geq n$ must contain a monochromatic copy of $G$. We refer to \cite{MR1670625} for a dynamic survey of known Ramsey numbers. As a restricted version of the Ramsey number, the $k$-color Gallai-Ramsey number $gr_{k}(K_{3} : G)$ is defined to be the minimum integer $n$ such that every $k$-coloring of the edges of $K_{N}$ for $N \geq n$ must contain a either a rainbow triangle or a monochromatic copy of $G$. We refer to \cite{FMO14} for a dynamic survey of known Gallai-Ramsey numbers. In particular, the following was recently conjectured for complete graphs.

\begin{conjecture}[\cite{FGP15}]\labelz{Conj:Fox}
For $k\ge 1$ and $p \geq 3$,
$$
gr_{k}(K_{3} : K_{p}) = \begin{cases}
(r(p) - 1)^{k/2} + 1 & \text{ if $k$ is even,}\\
(p - 1)(r(p) - 1)^{(k - 1)/2} + 1 & \text{ if $k$ is odd.}
\end{cases}
$$
\end{conjecture}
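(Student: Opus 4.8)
The plan is to prove the conjecture in the case this paper targets, namely $p=5$, by establishing matching lower and upper bounds for $gr_{k}(K_{3}:K_{5})$; throughout I treat the even and odd parities of $k$ in parallel, with base cases $gr_{1}(K_{3}:K_{5})=5$ (a single color forces a monochromatic $K_{5}$ only on $5$ vertices, matching $(p-1)+1$) and $gr_{2}(K_{3}:K_{5})=R(5,5)$ (with two colors a rainbow triangle is impossible, so the Gallai--Ramsey number collapses to the ordinary two-color Ramsey number, matching $(r(5)-1)+1$). The central tool is Gallai's structure theorem: every coloring of a complete graph with no rainbow triangle admits a nontrivial partition of the vertices into parts $H_{1},\dots,H_{m}$ with $m\ge 2$ such that between any two parts exactly one color appears and the reduced coloring on the $m$ parts uses at most two colors.

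For the lower bound I would give an explicit iterated blow-up construction. Starting from a monochromatic $K_{4}$ when $k$ is odd, or from a two-colored $K_{R(5,5)-1}$ with no monochromatic $K_{5}$ when $k$ is even, I iterate: take an extremal two-coloring of $K_{R(5,5)-1}$ avoiding a monochromatic $K_{5}$ in a fresh pair of colors, and substitute a copy of the current construction for each of its vertices. No rainbow triangle is created, since any triangle meeting at least two copies sees only the two fresh colors on its cross-copy edges, while triangles inside a single copy are rainbow-free by induction. No monochromatic $K_{5}$ is created either, because a clique in a fresh color must project to a monochromatic clique among the parts of the extremal $K_{R(5,5)-1}$. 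Counting the vertices after $(k-1)/2$ (odd) or $(k-2)/2$ (even) rounds reproduces exactly the two branches of the conjectured formula.

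For the upper bound I would argue by induction on $k$. Given a Gallai coloring of $K_{n}$ with $n$ equal to the claimed value and with no monochromatic $K_{5}$, take the Gallai partition with the fewest parts, so its reduced coloring uses exactly two colors, say $1$ and $2$. Since five parts that are pairwise joined by a single color yield a monochromatic $K_{5}$, the reduced two-coloring contains no monochromatic $K_{5}$, whence $m\le R(5,5)-1$. The heart of the proof is then a case analysis on $m$ and on how the two reduced colors interact with the colors used inside the parts: one groups and recurses on the parts so that the bound picks up a factor of at most $R(5,5)-1$ while two colors are absorbed, matching the recursion $f(k)\le (R(5,5)-1)\,f(k-2)$ that the formula satisfies, with the parity-dependent base cases above.

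The main obstacle, and the step I expect to be delicate, is controlling monochromatic $K_{5}$'s that are assembled from both cross-part edges and intra-part edges of a common color; ruling these out requires sharp bounds on how many parts can share a color and on the internal structure of those parts, and this is exactly where the value of $R(5,5)$ becomes decisive. The sharp multiplicative factor is $R(5,5)-1$, so the extremal two-colorings of $K_{R(5,5)-1}$ govern whether the blow-up construction can be pushed further. I would expect that if $R(5,5)=43$ the extremal coloring of $K_{42}$ carries enough extra symmetry that a modified substitution beats the conjectured count, so the upper-bound argument must invoke $44\le R(5,5)\le 48$ precisely to close this gap.
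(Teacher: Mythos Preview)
The paper does not prove this conjecture unconditionally for $p=5$; one of its main points is that the conjecture \emph{fails} if $R(5,5)=43$, since Lemma~\ref{Lem:C-Example} exhibits a $3$-colored $K_{169}$ with no rainbow triangle and no monochromatic $K_5$, while the conjecture would give $gr_3(K_3:K_5)=4\cdot 42+1=169$. Your lower-bound construction is the standard one and matches the paper, and you correctly note at the end that the argument must assume $R(5,5)\ge 44$; but the whole proposal should be framed as a proof of Theorem~\ref{Thm:Main} rather than of the conjecture itself.

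The more serious gap is in the upper bound. The induction you describe---remove two colors and pick up a factor of $R(5,5)-1$---does not close as stated. When you take a Gallai partition with reduced colors red and blue, a part $H_i$ that has a red neighbor in the reduced graph is forbidden not from containing a red $K_5$ but from containing a red $K_4$ (or even a red $K_3$ or a red edge, depending on the red clique it sits in); so inside $H_i$ the constraint on red has \emph{changed type}, and the inductive hypothesis ``no monochromatic $K_5$ in $k-2$ colors'' is not the statement you need to apply. The paper handles this by proving a strictly stronger mixed result, Theorem~\ref{Thm:grK5}: it determines $gr_k(K_3: rK_5, sK_4, tK_3)$ for all $r+s+t=k$, where $r$ colors forbid $K_5$, $s$ colors forbid $K_4$, and $t$ colors forbid $K_3$. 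The induction is on $3r+2s+t$, and a single step may trade a $K_5$-constraint for a $K_4$- or $K_3$-constraint rather than eliminate a color outright. This produces eleven parity cases $(c_1),\dots,(c_{11})$ and a battery of ratio inequalities (Tables~\ref{Table:1}--\ref{Table:4}) that feed a six-way case split on where red and blue fall among the $r$, $s$, $t$ colors. Without this strengthened inductive statement the ``case analysis on $m$'' you allude to cannot be made to balance; this refinement is the missing idea in your sketch.
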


The case where $p = 3$ was actually verified in 1983 by Chung and Graham \cite{MR729784}. A simplified proof was given by Gy\'arf\'as et al.~\cite{GSSS10}.

\begin{theorem}[\cite{MR729784}]\labelz{Thm:grK3}
For $k \geq 1$,
$$
gr_{k}(K_{3} : K_{3}) = \begin{cases}
5^{k/2} + 1 & \text{if $k$ is even,}\\
2\cdot 5^{(k-1)/2} + 1 & \text{if $k$ is odd.}
\end{cases}
$$
\end{theorem}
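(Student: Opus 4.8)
The plan is to prove matching lower and upper bounds on the extremal quantity $f(k)$, defined as the largest order of a complete graph admitting a $k$-edge-coloring with neither a rainbow triangle nor a monochromatic triangle. Then $gr_{k}(K_{3}:K_{3}) = f(k)+1$, and it suffices to show $f(k) = 5^{k/2}$ for even $k$ and $f(k) = 2\cdot 5^{(k-1)/2}$ for odd $k$. The main structural tool throughout is Gallai's theorem: every edge-coloring of a complete graph with no rainbow triangle admits a nontrivial vertex partition $V_{1},\dots,V_{t}$ with $t\ge 2$ in which each pair of parts is joined by edges of a single color and the induced reduced coloring of $K_{t}$ uses at most two colors.

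For the lower bound I would give a recursive substitution construction. The base colorings are a single vertex for $k=0$, a single edge for $k=1$, and for $k=2$ the $2$-coloring of $K_{5}$ whose two color classes are a $5$-cycle $C_{5}$ and its complement $\overline{C_{5}}$ (again a $C_{5}$); none contains a monochromatic triangle, and with at most two colors no rainbow triangle can occur. To pass from $k-2$ to $k$ colors I would take this $C_{5}/\overline{C_{5}}$ coloring on $5$ super-vertices in two fresh colors and blow up each super-vertex into a disjoint copy of an extremal $(k-2)$-coloring on $f(k-2)$ vertices, coloring edges inside a copy by its own colors and edges between two copies by the reduced color of the corresponding super-edge. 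A triangle either lies in one copy (no monochromatic triangle by induction), meets exactly two copies (its two cross edges share one new color and its third edge has an old color, so it is neither monochromatic nor rainbow), or meets three copies (it lifts to a triangle of $C_{5}/\overline{C_{5}}$, which is neither). This gives $f(k)\ge 5\,f(k-2)$, hence the claimed lower bounds.

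For the upper bound I would induct on $k$. Fix a Gallai partition $V_{1},\dots,V_{t}$ as above. If the reduced coloring uses one color then $t\le 2$, since otherwise one vertex from each of three parts forms a monochromatic triangle; if it uses two colors then, as a monochromatic triangle in the reduced $K_{t}$ lifts to one in the whole graph, the reduced coloring is triangle-free in each color, and $R(3,3)=6$ forces $t\le 5$. The key local principle is: if the reduced vertex $i$ is incident to a reduced edge of color $c$, then $V_{i}$ has no internal edge of color $c$, for such an edge together with a vertex of the adjacent part would create a monochromatic triangle. The decisive case is $t=5$: here the reduced coloring of $K_{5}$ is triangle-free in each color, and since the only such $2$-coloring is $C_{5}/\overline{C_{5}}$ (a $6$--$4$ split would force the complement of $K_{2,3}$, which contains a triangle), every reduced vertex meets both colors, so each $V_{i}$ avoids both and is an extremal $(k-2)$-coloring, giving $\sum|V_{i}|\le 5\,f(k-2)$.

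The main work, and the place I expect the real difficulty, is verifying that none of the remaining configurations beats $5\,f(k-2)$. For $t=2$ each part avoids the single cross color and uses at most $k-1$ colors, so $f(k)\le 2f(k-1)$; for $t=4$ the reduced $K_{4}$ is $C_{4}$ in one color and a perfect matching in the other, so again every part meets both colors and $f(k)\le 4f(k-2)$; for $t=3$ the reduced $K_{3}$ is a $2$--$1$ split in which the degree-$2$ vertex meets only one color, giving a mixed bound $f(k)\le f(k-1)+2f(k-2)$. Using the inductive formula one then checks the elementary inequalities $2f(k-1)\le 5f(k-2)$ and $f(k-1)+2f(k-2)\le 5f(k-2)$ (equivalently $f(k-1)\le 3f(k-2)$), so in every case $f(k)\le 5\,f(k-2)$, and the induction closes with the base values $f(0)=1$ and $f(1)=2$ (consistent with $f(2)=5=R(3,3)-1$). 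Matching this with the construction yields $gr_{k}(K_{3}:K_{3})=f(k)+1$ as claimed.
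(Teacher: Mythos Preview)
The paper does not give its own proof of this statement; it is quoted as a known result of Chung and Graham, with a simplified proof attributed to Gy\'arf\'as, Simonyi, S\'ark\"ozy, and Seb\H{o}. Your argument is correct and is essentially that standard proof: a blow-up construction for the lower bound, and for the upper bound a Gallai partition together with $R(3,3)=6$ and a case analysis on the number of parts.

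One small imprecision worth fixing: for $t=4$ you assert that the reduced $K_{4}$ must be a $C_{4}$ in one color and a perfect matching in the other, but there is a second triangle-free $2$-coloring of $K_{4}$, namely $P_{4}$ versus its complement (also a $P_{4}$). This does not damage your argument, because in \emph{any} $2$-coloring of $K_{4}$ with no monochromatic triangle every vertex has positive degree in both colors (a vertex with all three incident edges in one color would force the opposite triangle to be monochromatic in the other color), so the bound $4f(k-2)$ still holds. You could streamline by proving this degree fact directly for all $t\ge 4$ instead of classifying the colorings.

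For comparison, the paper's general framework (the proof of its main theorem, specialized to $r=s=0$) would handle the upper bound slightly differently: it chooses the Gallai partition with the \emph{fewest} parts, reduces $q=3$ to $q=2$ by merging two parts with a common cross color, and then for $q\ge 4$ invokes minimality to conclude immediately that every reduced vertex has an incident edge of each color, so every block misses both colors and $|G|\le 5f(k-2)$. This bypasses the structural classification of $K_{4}$ and $K_{5}$ colorings entirely, at the cost of the extra minimality argument; your hands-on case analysis is equally valid and arguably more transparent for this small instance.
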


The next case, where $p = 4$, was proven in \cite{LMSSS17}.

\begin{theorem}\labelz{Thm:grK4}
For $k\ge 1$,
$$
gr_{k}(K_{3} : K_{4}) = \begin{cases} 17^{k/2} + 1 & \text{ if } k \text{ is even,}\\
3\cdot 17^{(k - 1)/2} + 1 & \text{ if } k \text{ is odd.}
\end{cases}
$$
\end{theorem}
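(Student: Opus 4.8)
The plan is to prove the matching lower and upper bounds separately, both driven by the single numerical input $R(4,4)=18$, equivalently $r(4)-1=17$, which is the base of the exponential in the claimed formula. The central tool throughout the upper bound is Gallai's structure theorem: any edge-coloring of a complete graph with no rainbow triangle admits a \emph{Gallai partition}, i.e.\ a partition of the vertices into at least two parts such that at most two colors appear on edges between parts and every pair of parts is joined monochromatically. The reduced coloring on the parts is then a $2$-coloring, which is precisely what lets $R(4,4)$ enter.

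For the lower bound I would exhibit, for each $k$, a Gallai $k$-coloring of $K_{f(k)}$ with no monochromatic $K_4$, where $f(k)=17^{k/2}$ for even $k$ and $f(k)=3\cdot 17^{(k-1)/2}$ for odd $k$. The construction is the standard iterated substitution. Start from a base block: a monochromatic $K_3$ (odd parity) or a $2$-coloring of $K_{17}$ with no monochromatic $K_4$, which exists exactly because $R(4,4)=18$. To pass from $k$ to $k+2$ colors, substitute into every vertex of the current colored complete graph a scaled copy of the $2$-colored $K_{17}$ using two fresh colors internally, keeping the old colors on all edges between distinct copies. One checks that no rainbow triangle is created, since any triangle lies inside one copy (two fresh colors), or has two vertices in one copy and one outside (one internal edge plus two edges of a common old color), or has its vertices in three distinct copies (three edges from the reduced $2$-coloring); in every case at most two colors appear. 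One also checks that no monochromatic $K_4$ arises: a $K_4$ in an old color would project to a monochromatic $K_4$ of the reduced coloring, while one in a fresh color would sit inside a single block, both impossible by the inductive hypothesis. This yields $gr_k(K_3:K_4)\ge f(k)+1$.

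For the upper bound I would induct on $k$, with base cases $k=1$ (a $K_4$ is forced on $4$ vertices) and $k=2$ (rainbow triangles are automatically absent, so the statement is exactly $R(4,4)=18$). For the inductive step, take a Gallai $k$-coloring of $K_N$ with $N=f(k)+1$ and apply Gallai's theorem to get a Gallai partition into parts $V_1,\dots,V_m$, chosen with $m$ minimum, whose reduced $2$-coloring uses colors $a$ and $b$. If $m\ge 18$, then the reduced $K_m$ contains a monochromatic $K_4$ by $R(4,4)=18$, and one vertex from each of the four corresponding parts lifts it to a monochromatic $K_4$ of $K_N$; so assume $m\le 17$ and hence $n_1:=|V_1|\ge N/17$. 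The decisive local observation is that if a part $V_i$ sends color $a$ to some other part and $V_i$ itself contains a monochromatic triangle in color $a$, then together with any single vertex of that other part we obtain a monochromatic $K_4$ in color $a$ (symmetrically for $b$, and for internal monochromatic $K_4$'s). Thus in a counterexample every part incident to an outgoing $a$-edge is internally free of $a$-triangles, and likewise for $b$, which sharply restricts how $a$ and $b$ can be reused inside the large parts; feeding this back into the induction hypothesis applied to $V_1$ (with the surviving colors) against the size bound $n_1\ge N/17$ and the parity-dependent ratios $f(k)/f(k-1)\in\{3,\,17/3\}$ should force the monochromatic $K_4$.

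The hard part is exactly this inductive step when $m\le 17$: matching the exact base $17$ and the parity factor $3$ demands a careful case analysis of the reduced $2$-coloring and of how the two boundary colors interact with the internal colorings of the parts. The tightest cases push the reduced graph toward the essentially unique (Paley-type) extremal $2$-coloring of $K_{17}$ avoiding a monochromatic $K_4$, and one must exploit its rigidity together with a pigeonhole on the part sizes to close the bound. This is also where the analysis for $K_5$ is expected to become delicate, since there the analogous input is $R(5,5)$, which is unknown --- precisely the phenomenon this paper sets out to exploit.
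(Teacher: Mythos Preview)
The paper does not prove this theorem; it is quoted from \cite{LMSSS17} as a known result, so there is no proof in the paper to compare against directly. That said, your proposal can be assessed against the method of \cite{LMSSS17}, which is exactly the template this paper follows for $K_5$ via its Theorem~\ref{Thm:grK5}.

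Your lower bound construction is correct and is the standard one. The upper bound sketch, however, has a genuine gap at the point you yourself flag as ``the hard part.'' The induction on $k$ alone does not close. Once you observe that a part $V_i$ with an outgoing $a$-edge can contain no $a$-triangle, the constraint on color $a$ \emph{inside} $V_i$ has dropped from ``no $K_4$'' to ``no $K_3$''; the induction hypothesis $gr_{k'}(K_3:K_4)$ therefore does not apply to $V_i$. What is actually needed (and what \cite{LMSSS17} does, and what this paper mirrors for $K_5$) is to strengthen the statement to a mixed form $gr_k(K_3 : sK_4, tK_3)$ with an explicit formula in every parity case, and to induct on $2s+t$ rather than on $k$. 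The ratios you mention, $f(k)/f(k-1)\in\{3,17/3\}$, are not the relevant ones: the controlling ratios are things like $g(s-1,t+1)/g(s,t)$, $g(s-1,t)/g(s,t)$, $g(s-2,t+2)/g(s,t)$, etc., and the case analysis on the reduced graph (how many parts carry red edges, how many carry blue edges, Claims analogous to~\ref{Claim:p2<=1}--\ref{Claim:VbVr3} here) is what makes the arithmetic go through. Your pigeonhole $n_1\ge N/17$ by itself is too coarse; without the mixed induction hypothesis and the structural claims bounding $p_0,p_1,p_2$, the phrase ``should force the monochromatic $K_4$'' is exactly where the argument is missing.
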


Our main result is to essentially prove Conjecture~\ref{Conj:Fox} in the case where $p=5$. This result is particularly interesting since $r(K_{5}, K_{5})$ is still not known. Let $R = r(K_{5}, K_{5}) - 1$ and note that the known bounds on this Ramsey number give us $42 \leq R \leq 47$.

\begin{theorem}\labelz{Thm:Main}
For any integer $k \geq 2$,
$$
gr_{k}(K_{3} : K_{5}) = \begin{cases}
R^{k/2} + 1 & \text{ if $k$ is even,}\\
4 \cdot R^{(k - 1)/2} + 1 & \text{ if $k$ is odd}
\end{cases}
$$
unless $R = 42$, in which case we have
$$
\begin{cases}
gr_{k}(K_{3} : K_{5}) = 43 & \text{ if $k=2$},\\
42^{k/2} + 1 \leq gr_{k}(K_{3} : K_{5}) \leq 43^{k/2} + 1 & \text{ if $k \geq 4$ is even,}\\
169 \cdot 42^{(k-3)/2} + 1 \leq gr_{k}(K_{3} : K_{5}) \leq 4 \cdot 43^{(k-1)/2} + 1
 & \text{ if $k \geq 3$ is odd.}
\end{cases}
$$
\end{theorem}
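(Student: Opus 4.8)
The plan is to prove the lower and upper bounds separately, using the Gallai structure theorem (every edge-coloring of a complete graph with no rainbow triangle admits a nontrivial partition of its vertices into parts such that at most two colors appear between parts and each pair of parts is joined monochromatically) as the engine for the upper bounds, and explicit recursive blow-ups for the lower bounds. Throughout I write $R = r(K_5,K_5)-1$.

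\textbf{Lower bounds.} For the generic values I would use the standard recursive construction. Start from a base $G_1 = $ a monochromatic $K_4$ (odd case) or $G_2 = $ a $2$-coloring of $K_R$ with no monochromatic $K_5$, which exists precisely because $R = r(K_5,K_5)-1$. To pass from $k$ to $k+2$ colors, take $R$ disjoint copies of $G_k$ and color the edges between copies by an extremal $2$-coloring of $K_R$ in two fresh colors. One checks that no rainbow triangle is created (a triangle meeting two or three copies repeats a between-copy color) and that no monochromatic $K_5$ appears (a clique in a between-copy color meets each copy in at most one vertex, so corresponds to a clique in the $K_5$-free reduced coloring, while a clique in an old color lies in a single copy). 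This gives $|G_k| = R^{k/2}$ or $4R^{(k-1)/2}$. For the anomalous case $R = 42$ the extra ingredient is a $3$-coloring of $K_{169}$ with no rainbow triangle and no monochromatic $K_5$: take the extremal coloring of $K_{13}$ witnessing $r(K_3,K_5) = 14$ (one color a triangle-free graph with independence number $4$, so its clique number is $2$ and its complement has clique number $4$), blow up each of the $13$ vertices into a part carrying an isomorphic copy, and recolor so that the triangle-free color is shared as color $1$, the between-part complement is color $2$, and the within-part complement is color $3$. A short check shows every monochromatic clique has at most $2+2$, $4$, or $4$ vertices respectively and every triangle repeats a color, so this gadget is valid; iterating the standard blow-up by factor $42$ then yields the improved odd lower bound $169 \cdot 42^{(k-3)/2}+1$, which exceeds the conjectured $168 \cdot 42^{(k-3)/2}+1$.

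\textbf{Upper bounds.} Here I would induct on $k$, with base case $k=2$ being exactly $gr_2(K_3:K_5) = r(K_5,K_5) = R+1$, since two colors admit no rainbow triangle. For the step, apply the Gallai partition to obtain parts with a $2$-colored reduced complete graph, and, after merging parts joined in a single color, assume the reduced graph genuinely uses two colors. The crux is a counting lemma bounding the total number of vertices in terms of the part sizes and the monochromatic cliques forced in the reduced $2$-coloring: a reduced monochromatic $K_5$ lifts directly, while a reduced monochromatic $K_4$ forbids an internal edge of that color in each participating part, letting one charge the part sizes against the induction hypothesis applied inside each part. In the generic case ($R \geq 43$) this balances to give the multiplier $R$ per two colors, matching the construction; in the case $R = 42$ the same argument, run more crudely, only yields the multiplier $43 = r(K_5,K_5)$, producing the upper bounds $43^{k/2}+1$ and $4\cdot 43^{(k-1)/2}+1$.

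\textbf{Main obstacle.} I expect the decisive difficulty to be the inductive step's case analysis on how large parts may coexist with the monochromatic cliques of the reduced $2$-coloring, since several colors can combine both across and within parts and must be controlled simultaneously. The genuinely irreducible obstacle, however, is that when $R = 42$ the $169$-vertex gadget shows the extremal configuration is strictly richer than the conjectured blow-up, so the exact value is governed by structures we cannot currently pin down; this is exactly why the sharp answer, and with it the truth of Conjecture~\ref{Conj:Fox}, hinges on the still-unknown value of $r(K_5,K_5)$.
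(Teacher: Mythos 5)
Your lower-bound half is essentially the paper's own construction: the recursive two-color blow-ups from the bases $K_4$ and an extremal $2$-coloring of $K_R$, and the $K_{169}$ gadget obtained by substituting a recolored copy of the $R(K_3,K_5)$-sharpness graph on $13$ vertices into each vertex of another copy (this is exactly Lemma~\ref{Lem:C-Example}). That part is correct.

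The upper-bound half, however, has a genuine gap. You propose to induct on $k$ for the statement $gr_k(K_3:K_5)$ itself and to ``charge the part sizes against the induction hypothesis applied inside each part.'' But the induction hypothesis you carry only bounds Gallai colorings avoiding a monochromatic $K_5$ in every color, whereas the parts of a Gallai partition are constrained differently: if red and blue are the two reduced-graph colors, then (by minimality of the partition) every part has a red and a blue neighbor, so a part may contain red edges yet must avoid a red $K_4$, or may be forced to avoid a red triangle or all red edges depending on the reduced cliques it lies in. Bounding the order of such a part requires knowing the Gallai-Ramsey number for the \emph{mixed} target ``$K_5$ in $r$ colors, $K_4$ in $s$ colors, $K_3$ in $t$ colors,'' which is not an instance of your inductive statement. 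This is precisely why the paper does not prove Theorem~\ref{Thm:Main} directly but instead proves the strengthened Theorem~\ref{Thm:grK5}, whose eleven parity cases and twenty-two ratio inequalities (Tables~\ref{Table:1}--\ref{Table:4}) are what allow each part to be charged at the correct rate ($\frac{1}{R}$, $\frac{13}{4R}$, $\frac{18}{R}$, etc., depending on which red/blue cliques the part may still contain). Without strengthening the induction statement in this way (or supplying an equivalent device), your counting lemma cannot be closed: the very first nontrivial configuration, a part containing a red edge but no red $K_4$, already has no bound available from the plain hypothesis. Your closing remark that the $R=42$ case degrades to multiplier $43$ is consistent with where the paper's Case~\ref{Case:6} computation reaches $\frac{43}{R}|G|$, but that observation only becomes meaningful once the full mixed-target machinery is in place.
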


Theorem~\ref{Thm:Main} is proven in Section~\ref{Sec:MainPf}. Note that if $R = 43$, then Theorem~\ref{Thm:Main} implies that Conjecture~\ref{Conj:Fox} is false.

Also recall the following well known conjecture about the sharp value for the $2$-color Ramsey number of $K_{5}$.

\begin{conjecture}[\cite{MR1438619}]\labelz{Conj:McKay}
$R(K_{5}, K_{5}) = 43$.
\end{conjecture}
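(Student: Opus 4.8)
The plan is to prove the two inequalities $R(K_5,K_5)\ge 43$ and $R(K_5,K_5)\le 43$ separately. Writing a $2$-coloring of $K_n$ as a graph $G$ (the red edges) with complement $\overline{G}$ (the blue edges), I must show that a $(5,5)$-Ramsey graph — a graph $G$ on $n$ vertices in which neither $G$ nor $\overline{G}$ contains a $K_5$ — exists for $n=42$ but not for $n=43$. The lower bound $R(K_5,K_5)\ge 43$ is the easy half: it suffices to exhibit a single $(5,5;42)$-graph. I would take a circulant construction on $\mathbb{Z}_{42}$ in which $i\sim j$ precisely when $i-j$ lies in a prescribed symmetric set $S\subseteq\mathbb{Z}_{42}\setminus\{0\}$, and verify by direct (machine-assisted) checking that neither the red nor the blue circulant contains a $K_5$. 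Many such graphs are in fact known, so the lower bound is not in doubt.

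The entire difficulty lies in the upper bound $R(K_5,K_5)\le 43$, i.e., showing that no $(5,5;43)$-graph exists. My approach is structural-plus-computational, following the neighborhood-decomposition and subgraph-counting paradigm. The first step exploits $R(4,5)=25$: if $G$ is a $(5,5;n)$-graph then for every vertex $v$ the red neighborhood $G[N(v)]$ contains no red $K_4$ (else $v$ completes a red $K_5$) and, inheriting the absence of a blue $K_5$, contains no blue $K_5$; hence $G[N(v)]$ is a $(4,5)$-Ramsey graph and $\deg_G(v)\le R(4,5)-1=24$. Dually the non-neighborhood is a $(5,4)$-Ramsey graph, giving $\deg_G(v)\ge n-25$. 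For $n=43$ this forces every vertex to have red degree in the narrow range $[18,24]$, so the candidate graph is nearly regular and each vertex link is drawn from the finite, fully catalogued family of $(4,5)$- and $(5,4)$-Ramsey graphs.

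The second step is to impose the global counting identities. Goodman's formula for the number of monochromatic triangles, together with the higher identities of McKay and Radziszowski relating the counts of $K_4$'s, $K_5$'s, and their complements across all induced neighborhoods, yields a system of linear (in)equalities in the degree sequence and the subgraph-frequency vector. The goal is to show this system has no integer solution compatible with the link-graph catalogue, or, failing a purely arithmetic contradiction, to prune severely the admissible neighborhood types at each vertex. The final step is an exhaustive gluing search: order the vertices, assign to each a catalogued $(4,5)$-Ramsey link, and extend the partial coloring vertex by vertex, backtracking whenever a monochromatic $K_5$ appears or a counting constraint is violated, until either a $(5,5;43)$-graph is produced (disproving the conjecture) or the search terminates empty (proving it).

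I fully expect the gluing search to be the main — and quite possibly insurmountable — obstacle. Even with the degree window $[18,24]$ and the counting identities pruning aggressively, the number of ways to assemble $43$ mutually consistent $(4,5)$-Ramsey neighborhoods is astronomically large, and the known bound $R(K_5,K_5)\le 48$ reflects precisely how far current techniques fall short of $43$. Absent a new structural invariant that collapses the search space by orders of magnitude — for instance a spectral constraint forcing the putative graph to be a specific strongly regular or circulant graph, which could then be checked directly — I would not expect the enumeration to close, which is exactly why Conjecture~\ref{Conj:McKay} remains open.
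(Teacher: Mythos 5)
You have not produced a proof, and indeed no proof is possible to compare against: the statement you were given is Conjecture~\ref{Conj:McKay}, an \emph{open conjecture} that the paper cites from McKay and Radziszowski but does not (and cannot) prove. The paper's only engagement with it is negative evidence: Lemma~\ref{Lem:C-Example} constructs a $3$-colored $K_{169}$ with no rainbow triangle and no monochromatic $K_{5}$, and Theorem~\ref{Thm:Main} then shows that Conjecture~\ref{Conj:McKay} and Conjecture~\ref{Conj:Fox} cannot both hold --- if $R(K_{5},K_{5})=43$ then $gr_{3}(K_{3}:K_{5})\geq 170 > 169$, contradicting the Fox--Grinshpun--Pach formula. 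So the paper, if anything, gives a reason to suspect this conjecture might be false, rather than a proof that it is true.

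As for your proposal itself: the lower bound $R(K_{5},K_{5})\geq 43$ is indeed known (a $(5,5;42)$-graph exists, so that half is fine), and your structural setup for the upper bound is sound as far as it goes --- the link of every vertex in a putative $(5,5;43)$-graph is a $(4,5)$-Ramsey graph, forcing red degrees into $[18,24]$ by $R(4,5)=25$, and Goodman/McKay--Radziszowski counting identities do constrain the subgraph-frequency vector. But you then explicitly concede that the decisive step, the exhaustive gluing search, is ``quite possibly insurmountable'' and that you ``would not expect the enumeration to close.'' A proof attempt whose final step is an acknowledgment that the step cannot be carried out is not a proof; it is a research program, and in fact it is essentially the program the Ramsey-number community has pursued for decades without success (which is why the best known upper bound remains $R(5,5)\leq 48$, not $43$). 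The concrete gap is therefore total: the upper bound $R(K_{5},K_{5})\leq 43$ is established nowhere in your argument, and no amount of tightening the counting identities you list is currently known to close it.
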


By Theorem~\ref{Thm:Main}, it turns out that at least one of Conjecture~\ref{Conj:Fox} or Conjecture~\ref{Conj:McKay} must be false.

In order to prove Theorem~\ref{Thm:Main}, we actually prove a more refined version, stated in Theorem~\ref{Thm:grK5}. Note that Theorem~\ref{Thm:Main} follows from Theorem~\ref{Thm:grK5} by setting $r = k$, $s = 0$ and $t = 0$.

To simplify the notation, we let $c_{1}$ denote the case where $r, s, t$ are all even, $c_{2}$ denote the case where $r, s$ are both even and $t$ is odd, and so on for $c_{3}, \dots, c_{11}$.

\begin{theorem}\labelz{Thm:grK5}
For nonnegative integers $r, s, t$, let $k = r + s + t$. Then
\beqs
& gr_{k}(K_{3} : rK_{5}, sK_{4}, tK_{3}) =
& \begin{cases}
R^{r/2} \cdot 17^{s/2} \cdot 5^{t/2} + 1\\
~ ~ \text{ if $r, s, t$ are even, } (c_{1})\\
2 \cdot R^{r/2} \cdot 17^{s/2} \cdot 5^{(t-1)/2} + 1\\
~ ~ \text{ if $r, s$ are even, and $t$ is odd, } (c_{2})\\
3 \cdot R^{r/2} \cdot 17^{(s-1)/2} + 1\\
~ ~ \text{ if $r$ is even, $s$ is odd, and $t = 0$, } (c_{3})\\
4 \cdot R^{(r-1)/2} + 1\\
~ ~ \text{ if $r$ is odd, and $s = t = 0$, } (c_{4})\\
8 \cdot R^{r/2} \cdot 17^{(s-1)/2} \cdot 5^{(t-1)/2} + 1\\
~ ~ \text{ if $r$ is even, and $s, t$ are odd, } (c_{5})\\
13 \cdot R^{(r-1)/2} \cdot 17^{s/2} \cdot 5^{(t-1)/2} + 1\\
~ ~ \text{ if $r, t$ are odd, and $s$ is even, } (c_{6})\\
16 \cdot R^{r/2} \cdot 17^{(s-1)/2} \cdot 5^{(t - 2)/2} + 1\\
~ ~ \text{ if $r, t$ are even, $t \geq 2$, and $s$ is odd, } (c_{7})\\
24 \cdot R^{(r-1)/2} \cdot 17^{(s-1)/2} \cdot 5^{t/2} + 1\\
~ ~ \text{ if $r, s$ are odd, and $t$ is even, } (c_{8})\\
26 \cdot R^{(r-1)/2} \cdot 17^{s/2} \cdot 5^{(t-2)/2} + 1\\
~ ~ \text{ if $r$ is odd, $s$ is even, $t \geq 2$ is even, } (c_{9})\\
48 \cdot R^{(r-1)/2} \cdot 17^{(s-1)/2} \cdot 5^{(t-1)/2} + 1\\
~ ~ \text{ if $r, s, t$ are odd, } (c_{10})\\
72 \cdot R^{(r-1)/2} \cdot 17^{(s-2)/2} + 1\\
~ ~ \text{ if $r$ is odd, $t = 0$, and $s \geq 2$ is even. } (c_{11})\\
\end{cases}
\eeqs
\end{theorem}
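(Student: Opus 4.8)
The plan is to prove the generalized statement Theorem~\ref{Thm:grK5} by induction on $k = r+s+t$, since Theorem~\ref{Thm:Main} then follows immediately by specializing to $r=k$, $s=t=0$. Each such Gallai-Ramsey value is of the form $a \cdot (\text{product of } R,17,5) + 1$, and the product structure strongly suggests a multiplicative recursion: reducing the number of colors by two (removing one ``paired'' color) should multiply the relevant base case by the corresponding Ramsey-type factor. Concretely, I would first establish the \emph{lower bounds} by an explicit recursive construction. The standard Gallai construction substitutes a monochromatic-copy-free coloring on a smaller vertex set into each vertex of a $2$-colored extremal graph (a $(R(K_5,K_5)-1)$-vertex coloring with no monochromatic $K_5$ for the $R$ factor, a Ramsey-extremal coloring for the $17$ or $5$ factors). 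Blowing up along two colors at a time multiplies the number of vertices by $R$, $17$, or $5$ while introducing no rainbow triangle (since within each part we use the inductively good coloring and between parts only two colors appear), and no monochromatic target graph of the required multiplicity. The eleven cases $c_1,\dots,c_{11}$ and the constant prefactors $2,3,4,8,13,16,24,26,48,72$ arise precisely from how the leftover odd/small color counts combine in the base of the induction, so I would tabulate these small base cases (the coloring on a bounded number of vertices) explicitly.

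For the \emph{upper bounds}, the core tool is Gallai's structure theorem: any edge-coloring of a complete graph with no rainbow triangle admits a \emph{Gallai partition}, i.e.\ a partition of the vertices into parts $V_1,\dots,V_m$ ($m \ge 2$) such that between any two parts only one color appears and the ``reduced'' coloring on $K_m$ uses at most two colors. I would take a supposed coloring of $K_N$ on $N = gr_k(\cdots)$ vertices with no rainbow triangle and assume for contradiction it avoids all the target monochromatic subgraphs, then choose a Gallai partition with the fewest parts. Since the reduced graph is $2$-colored, I can apply the two-color Ramsey numbers $R(K_5,K_5)-1 = R$, $R(K_4,K_4)-1 = 17$, $R(K_3,K_3)-1 = 5$ (and the mixed numbers $R(K_5,K_4)$, etc.) to control the number of parts that can share each of the two reduced colors. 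The key quantitative step is a counting/pigeonhole argument: the parts and their sizes must satisfy that some part is large enough to invoke the inductive hypothesis on the $(k-1)$- or $(k-2)$-color sub-coloring inside it, or else the reduced structure forces a monochromatic copy of the target across parts. Bounding how the multiplicities of $K_5,K_4,K_3$ distribute across the at-most-two reduced colors is what produces the eleven parity cases.

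The genuinely hard part — and the reason the conjecture's truth hinges on the exact value of $R(K_5,K_5)$ — is the tight \emph{base case} analysis for small numbers of colors, particularly the interaction between the $K_5$ factor and the reduced two-coloring. When a reduced color is used on several parts, avoiding a monochromatic $K_5$ across those parts is governed by the two-color Ramsey number $R(K_5,K_5)$; if $R(K_5,K_5)=43$ then the extremal $42$-vertex Paley-type coloring admits an extra structural coincidence (a monochromatic $K_4$ appearing where the recursion needs only to avoid $K_5$), which breaks the clean multiplicative bound and forces the weaker sandwich bounds in the $R=42$ exceptional case. I expect the most delicate arguments to be in cases $c_3,c_4,c_7,c_9,c_{11}$, where one color type is exhausted ($t=0$ or the residual is forced even), since there the reduced coloring cannot freely absorb the leftover copies and one must argue directly that a part of the right size exists. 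To control these I would carefully set up the inductive hypothesis so that decreasing $(r,s,t)$ by an admissible amount (two of the same type, or transferring one $K_5$-color into a $K_4$- or $K_3$-role via the mixed Ramsey numbers) always lands in one of the eleven listed cases, and verify that the prefactors propagate correctly; this bookkeeping across all eleven cases, rather than any single clever inequality, is the real labor of the proof.
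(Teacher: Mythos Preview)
Your outline matches the paper's approach in its broad strokes: explicit nested constructions for the lower bounds, and Gallai's partition theorem plus induction for the upper bounds, with the eleven parity cases arising from the base graphs. But two points deserve correction.

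First, the induction parameter. You propose to induct on $k=r+s+t$, yet you also (correctly) note that a $K_5$-color may need to be ``transferred'' to a $K_4$- or $K_3$-role when a part contains, say, a red triangle but no red $K_4$. Such a transfer leaves $k$ unchanged, so induction on $k$ does not terminate. The paper inducts on the weighted quantity $3r+2s+t$, which strictly decreases under every reduction used (removing a color, or demoting a $K_5$-color to a $K_4$- or $K_3$-color). Without this, several of the key steps---for instance bounding a part that contains a red triangle via $g(r-1,s+1,t)$---would be circular.

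Second, your explanation of the $R=42$ anomaly is not the actual mechanism. It has nothing to do with a structural coincidence in a Paley-type extremal coloring. The point is purely arithmetic: there is a $3$-colored $K_{169}$ with no rainbow triangle and no monochromatic $K_5$, built by nesting two copies of the $R(K_3,K_5)=14$ sharpness example ($13\times 13=169$). When $R=42$ the conjectured value $4R+1=169$ is met by this construction, so the inequality $gr_3(K_3:K_5)\ge 170$ contradicts the conjecture. For $R\ge 43$ one has $4R\ge 172>169$ and the alternate construction is harmless.

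Finally, be aware that the upper bound is not disposed of by a single pigeonhole step. The paper organizes the argument into six cases according to which of the three blocks ($r$, $s$, or $t$) the two reduced colors belong to, and each case requires bounding the sizes of parts via a battery of precomputed ratio inequalities $g(r',s',t')/g(r,s,t)\le c$ (twenty-two such ``Types'') together with several structural lemmas on the possible weights of $(R_i,B_j)$-subgraphs. Your sketch underestimates this: the labor is not merely bookkeeping across the eleven parity cases, but a genuinely intricate subcase analysis on the number of Gallai parts and on which parts contain red or blue edges of various clique sizes.
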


For ease of notation, let $g(r, s, t)$ be the value of $gr_{k}(K_{3} : rK_{5}, sK_{4}, tK_{3})$ claimed above. Also, for each $i$ with $1 \leq i \leq 11$, let $g_{i}(r, s, t) = g(r, s, t) - 1$ in the case where $(c_{i})$ holds.

\section{Preliminaries}

In this section, we recall some known results and provide several helpful lemmas that will be used in the proof. First we state the main tool for looking at colored complete graphs with no rainbow triangle.

\begin{theorem}[\cite{MR0221974}]\labelz{Thm:G-Part}
In any coloring of a complete graph containing no rainbow triangle, there exists a nontrivial partition of the vertices (called a Gallai-partition) such that there are at most two colors on the edges between the parts and only one color on the edges between each pair of parts.
\end{theorem}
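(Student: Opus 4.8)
The plan is to prove the statement by induction on the number $n$ of vertices, extracting the partition from the connectivity of the individual color classes. Write $G_c$ for the spanning subgraph of edges colored $c$ and $\overline{G_c}$ for the subgraph of edges whose color is \emph{not} $c$. If the coloring uses at most two colors, the partition into singletons already works, since then every pair of parts is trivially monochromatic and only two colors occur between parts; this is the base situation. So I would assume that at least three colors are present and aim to produce a nontrivial partition into \emph{modules} (sets $M$ such that every vertex outside $M$ joins all of $M$ in a single color), which can then be refined and composed into the desired partition.

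The engine of the reduction is a short triangle-propagation argument. First, if $\overline{G_c}$ is disconnected for some color $c$, then any edge between two of its components would itself avoid color $c$ and hence lie in $\overline{G_c}$, a contradiction; thus \emph{all} edges between the components are colored $c$, and these components form a partition whose reduced graph uses the single color $c$, finishing this case outright. Second, suppose instead that some $G_c$ is disconnected, and let $D$ be one of its components. For a vertex $v$ outside $D$ every edge from $v$ to $D$ avoids color $c$; if $x,x'\in D$ are joined by a color-$c$ edge, then the triangle $vxx'$ has one edge of color $c$ and two edges avoiding $c$, so by the no-rainbow-triangle hypothesis $\chi(vx)=\chi(vx')$. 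Propagating this equality along the connected graph $G_c[D]$ shows that $v$ sees all of $D$ in one color, so each such $D$ is a module and, applying the same argument on both sides, the edges between any two components are monochromatic. In either disconnected case I obtain a nontrivial partition $\mathcal{P}$ into modules with monochromatic edges between parts; the reduced coloring on $\mathcal{P}$ is again a Gallai coloring on fewer vertices, so induction supplies a Gallai partition of the reduced graph, and pulling the parts back and unioning them yields a Gallai partition of the original graph (monochromaticity between super-parts is inherited because all the reduced colors across a pair of super-parts coincide).

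The heart of the proof, and the step I expect to be the main obstacle, is the remaining case in which every color class $G_c$ \emph{and} every co-class $\overline{G_c}$ is connected. The reductions above show that in this case the coloring has no nontrivial module at all, since any module would again let me recurse and compose; that is, the coloring is \emph{prime} in the sense of modular decomposition, and what must be shown is that a prime Gallai coloring on at least three vertices uses at most two colors. The propagation argument already yields strong structural constraints in this regime: for every pair of colors $a,b$ the subgraph of edges colored $a$ or $b$ and its complement are both connected, since a disconnected such (co-)class would, by the same triangle argument, produce a nontrivial module. Converting these constraints into a bound of two colors is precisely the content of Gallai's structure theorem for comparability graphs (the source cited for the statement), and this is where I would either invoke that theorem directly or reproduce the dedicated prime-case argument. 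Granting the prime case, the induction closes: every Gallai coloring either is handled directly, reduces to a smaller one through a module partition, or is prime and hence two-colored, and in all cases the required partition exists.
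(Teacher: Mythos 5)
The paper never proves Theorem~\ref{Thm:G-Part}; it is quoted as a known result from Gallai's paper \cite{MR0221974}, so there is no internal proof to compare against, and your proposal must stand on its own. It does not: there is a genuine gap, and it sits exactly where you flagged it. Your reductions are correct as far as they go. If some co-class $\overline{G_c}$ is disconnected, its components do form a Gallai partition outright; if some class $G_c$ (or a two-color union $G_a \cup G_b$, or its complement) is disconnected, the triangle-propagation argument does show each component is a module with monochromatic edges between components, and the recursion plus pull-back composes correctly. But these are the routine parts of the argument.

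The entire content of the theorem is concentrated in your remaining ``prime'' case, and the statement you need there --- a Gallai coloring with no nontrivial module on at least three vertices uses at most two colors --- is not a lemma you can cite to finish; it is \emph{equivalent} to the theorem being proved. Indeed, in any Gallai partition every part is a module (each outside vertex lies in some other part and sees the given part in the single color assigned to that pair of parts), so a prime coloring admits only all-singleton Gallai partitions, which forces at most two colors; conversely, your own composition argument recovers the full theorem from the prime case. Hence ``invoke that theorem directly'' means invoking the very result you set out to prove, and ``reproduce the dedicated prime-case argument'' is precisely the proof that is missing. The connectivity constraints you extract (every $G_c$, $\overline{G_c}$, $G_a \cup G_b$ and its complement connected) do not yield the two-color bound by any short propagation argument; closing that case requires the substantive machinery of Gallai's original work on transitive orientations of comparability graphs, or one of the later self-contained derivations (Gy\'arf\'as--Simonyi, Cameron--Edmonds), none of which your proposal reproduces or replaces. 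So what you have is a correct modular-decomposition frame around an unproved core, not a proof.
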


In light of this result, a colored complete graph with no rainbow triangle is called a \emph{Gallai coloring} and the partition resulting from Theorem~\ref{Thm:G-Part} is called a \emph{Gallai partition}.

Next recall some useful Ramsey numbers.

\begin{theorem}[\cite{GG}]\labelz{Thm:R35}
$$
R(K_{3}, K_{5}) = 14.
$$
\end{theorem}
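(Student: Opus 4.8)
The statement $R(K_{3},K_{5}) = 14$ splits into an upper bound $R(K_{3},K_{5}) \le 14$ and a matching lower bound $R(K_{3},K_{5}) \ge 14$, and I would prove these two inequalities by completely different methods. Throughout I think of a $2$-coloring of the edges of $K_{n}$ with colors red and blue, looking for a red $K_{3}$ or a blue $K_{5}$; equivalently, writing $G$ for the red graph, I look for a triangle in $G$ or an independent set of size $5$ in $G$.

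For the upper bound the plan is to use the classical neighborhood-counting recurrence $R(K_{3},K_{5}) \le R(K_{2},K_{5}) + R(K_{3},K_{4})$ together with the subsidiary values $R(K_{2},K_{5}) = 5$ (trivial) and $R(K_{3},K_{4}) = 9$. Concretely, I would fix a vertex $v$ of a $2$-colored $K_{14}$. Since $v$ has $13 = 5 + 9 - 1$ other vertices, its red-degree is at least $5$ or its blue-degree is at least $9$. In the first case, the red-neighborhood of $v$ has at least $R(K_{2},K_{5}) = 5$ vertices, so it contains either a red edge (which closes a red $K_{3}$ with $v$) or a blue $K_{5}$. In the second case, the blue-neighborhood of $v$ has at least $R(K_{3},K_{4}) = 9$ vertices, so it contains a red $K_{3}$ or a blue $K_{4}$, and the latter extends to a blue $K_{5}$ together with $v$. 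Either way we are done, giving $R(K_{3},K_{5}) \le 14$.

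For the lower bound I would exhibit an explicit $2$-coloring of $K_{13}$ with no red $K_{3}$ and no blue $K_{5}$. The plan is to take the red graph to be the circulant $G = C_{13}(\{\pm 1, \pm 5\})$ on vertex set $\mathbb{Z}_{13}$, where $i \sim j$ exactly when $i - j \equiv \pm 1$ or $\pm 5 \pmod{13}$. This graph is $4$-regular and triangle-free: a red triangle would require $a, b, a+b$ all lying in the connection set $\{1,5,8,12\}$ with $a + b \not\equiv 0$, and a direct check of the sums $a + b$ for $a,b$ in this set shows none of them lands back in the set. Coloring the edges of $G$ red and all remaining edges blue, a blue $K_{5}$ would be an independent set of size $5$ in $G$, so it remains to verify that the independence number of $G$ equals $4$.

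The main obstacle is exactly this last verification, that $G$ has no independent set of size $5$; this is the genuinely combinatorial content of the lower bound, and I would check it using the symmetry of the circulant (it suffices to consider independent sets containing the vertex $0$) together with a short case analysis on the admissible differences, or by appealing to the known fact that the cyclic $(13,4)$-graph is the unique extremal Ramsey graph in this case. A secondary obstacle is that the upper-bound argument is only as strong as its input $R(K_{3},K_{4}) = 9$, whose own upper bound $R(K_{3},K_{4}) \le R(K_{2},K_{4}) + R(K_{3},K_{3}) - 1 = 4 + 6 - 1 = 9$ requires the parity refinement of the recurrence (both summands being even) rather than the plain recurrence, and whose lower bound needs a separate triangle-free graph on $8$ vertices with independence number $3$. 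Once these ingredients are in place, combining the two inequalities yields $R(K_{3},K_{5}) = 14$.
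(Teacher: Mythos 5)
The paper does not prove this statement at all: it simply cites the classical value $R(K_{3},K_{5})=14$ from Greenwood and Gleason, so there is no in-paper argument to compare against. Your proposal is essentially the original Greenwood--Gleason proof and it is correct. The upper bound via $R(K_{3},K_{5})\le R(K_{2},K_{5})+R(K_{3},K_{4})=5+9=14$ with the neighborhood dichotomy (red-degree $\ge 5$ or blue-degree $\ge 9$ at any vertex of $K_{14}$) is sound, and you correctly flag that the input $R(K_{3},K_{4})\le 9$ needs the parity-refined recurrence $R(2,4)+R(3,3)-1=9$ rather than the plain one. For the lower bound, your circulant $C_{13}(\{\pm 1,\pm 5\})$ is exactly the cubic-residue graph on $\mathbb{Z}_{13}$ (the residues are $\{1,5,8,12\}$); the triangle-freeness check is as you describe, since no sum of two elements of $\{1,5,8,12\}$ lands back in that set modulo $13$, and the remaining verification that the independence number is $4$ is a genuine but routine finite case analysis that vertex-transitivity reduces to a handful of cases (e.g.\ an independent set containing $0$ and $2$ forces its other members into $\{4,6,9,11\}$, where the edges $4$--$9$ and $6$--$11$ block any further three vertices). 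Nothing is missing beyond carrying out those explicitly acknowledged finite checks.
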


\begin{theorem}[\cite{MR1324481}]\labelz{Thm:R45}
$$
R(K_{4}, K_{5}) = 25.
$$
\end{theorem}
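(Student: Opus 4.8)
The plan is to prove the equality $R(K_4, K_5) = 25$ by establishing the two bounds $R(K_4, K_5) \geq 25$ and $R(K_4, K_5) \leq 25$ separately. For the lower bound I would exhibit a single red/blue coloring of the edges of $K_{24}$ in which the red subgraph contains no $K_4$ and the blue subgraph contains no $K_5$; the existence of such a \emph{critical coloring} shows that $24$ vertices do not force the pattern, hence $R(K_4, K_5) > 24$. A convenient source of candidates is the family of circulant colorings on the vertex set $\mathbb{Z}_{24}$, where the color of an edge $\{i, j\}$ depends only on the difference $i - j \pmod{24}$; one then only needs to verify the two forbidden-clique conditions, which reduces to checking a bounded list of difference patterns. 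Explicit critical graphs of this kind are tabulated in the dynamic survey \cite{MR1670625}, so the lower bound is routine once such a graph is displayed.

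For the upper bound I would argue by contradiction: suppose there is a red/blue coloring of $K_{25}$ with no red $K_4$ and no blue $K_5$, and extract strong local constraints from the smaller Ramsey numbers $R(K_3, K_5) = 14$ (Theorem~\ref{Thm:R35}) and the classical value $R(K_4, K_4) = 18$. Fix a vertex $v$. Its red neighborhood $N_R(v)$ can contain no red triangle (else $v$ completes a red $K_4$) and no blue $K_5$, so $N_R(v)$ induces a $(3,5)$-critical coloring and $|N_R(v)| \leq R(K_3, K_5) - 1 = 13$. Likewise $N_B(v)$ can contain neither a red $K_4$ nor a blue $K_4$ (the latter would give a blue $K_5$ together with $v$), so $|N_B(v)| \leq R(K_4, K_4) - 1 = 17$. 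Since $|N_R(v)| + |N_B(v)| = 24$, every vertex has red degree in $[7,13]$ and blue degree in $[11,17]$.

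These elementary constraints only yield $R(K_4, K_5) \leq R(K_3, K_5) + R(K_4, K_4) = 32$, and the standard parity refinement (both $R(K_3,K_5)=14$ and $R(K_4,K_4)=18$ are even, forcing a regular red graph of odd degree on an odd number of vertices, whose degree sum cannot be even) improves this only to $R(K_4,K_5) \leq 31$. Closing the remaining gap down to the exact value $25$ is therefore the heart of the matter, and I would attack it by sharpening the local picture into a global contradiction: (i) double-counting identities for the numbers of red edges, red triangles, and blue $K_4$'s, using that the neighborhoods around any red triangle and around any vertex must all be realizable $(3,5)$- and $(4,4)$-critical graphs; (ii) the complete enumeration of the small $(3,5)$- and $(4,4)$-critical graphs, which confines the admissible neighborhood structures to a finite catalogue; and (iii) a gluing/extension search that attempts to assemble these admissible neighborhoods into a consistent coloring of all $25$ vertices.

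The main obstacle is precisely step (iii). Bridging the gap from the elementary bound of $31$--$32$ down to the exact value $25$ cannot be achieved by degree counting alone and requires an exhaustive case analysis over the catalogue of admissible neighborhood graphs; no short, fully hand-checkable argument is known, and the genuine proof (due to McKay and Radziszowski, \cite{MR1324481}) is computer-assisted. An honest plan must therefore delegate the final non-existence verification to such a search: I would present the degree, parity, and counting reductions in detail, and then invoke the verified computation to conclude that no $(4,5)$-critical graph on $25$ vertices exists. Together with the $24$-vertex construction this gives $R(K_4, K_5) = 25$.
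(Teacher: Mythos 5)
The paper does not prove this statement at all: it is quoted as a known, computer-assisted result of McKay and Radziszowski \cite{MR1324481}, so there is no internal proof to compare against. Your proposal---whose elementary reductions are correct as far as they go (the neighborhood bounds from $R(K_3,K_5)=14$ and $R(K_4,K_4)=18$, and the parity argument yielding $R(K_4,K_5)\leq 31$) but which, as you honestly acknowledge, cannot close the gap to $25$ by hand---ultimately delegates both the $24$-vertex construction and the decisive non-existence verification on $25$ vertices to that same computer-assisted source, so it is in essence the same approach as the paper's (citation of \cite{MR1324481}) and is stated accurately.
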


Also a general lower bound for Gallai-Ramsey numbers, a special case of the main result in \cite{M18}. We will present a more refined construction later for the purpose of proving Theorem~\ref{Thm:grK5}.

\begin{lemma}[\cite{M18}]\labelz{Lem:LowBnd}
For a connected complete graph $H$ of order $n$ and an integer $k \geq 2$, we have
$$
gr_{k}(K_{3} : H) \geq \begin{cases}
(R(H, H) - 1)^{k/2} + 1 & \text{ if $k$ is even,}\\
(n - 1) \cdot (R(H, H) - 1)^{(k - 1)/2} + 1 & \text{ if $k$ is odd.}
\end{cases}
$$
\end{lemma}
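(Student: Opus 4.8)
The plan is to prove the lower bound in Lemma~\ref{Lem:LowBnd} by an explicit recursive construction, building a Gallai coloring of a large complete graph that avoids both a rainbow triangle and a monochromatic copy of $H$. The base building block comes from the definition of the Ramsey number: since $R(H,H)$ is the least order forcing a monochromatic $H$ in any $2$-coloring, there exists a $2$-coloring of $K_{R(H,H)-1}$ with colors, say, $1$ and $2$ that contains no monochromatic $H$. This extremal $2$-colored graph will serve as the fundamental gadget for pairing up colors, and it is here that the factor $R(H,H)-1$ enters, contributing once for each pair of colors.

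First I would handle the even case $k=2m$. The construction is recursive: given a Gallai $(k-2)$-coloring $G'$ on $(R(H,H)-1)^{(k-2)/2}$ vertices using colors $\{3,\dots,k\}$ with no rainbow triangle and no monochromatic $H$, I would blow up the extremal $2$-colored $K_{R(H,H)-1}$ on colors $1,2$ by substituting a disjoint copy of $G'$ into each of its $R(H,H)-1$ vertices. Concretely, edges inside a part are colored as in $G'$, while edges between two parts receive the color ($1$ or $2$) assigned to the corresponding edge of the extremal graph. One then checks that the resulting coloring on $(R(H,H)-1)^{k/2}$ vertices is Gallai: a rainbow triangle would need three distinct colors, but any triangle either lies inside one part (inheriting $G'$'s Gallai property) or has at least two vertices in the same part, so its two cross-edges share the same color from $\{1,2\}$, precluding a rainbow triangle. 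There is no monochromatic $H$ either: a copy in color $1$ or $2$ would project to a monochromatic $H$ in the extremal $2$-coloring (since each part is an independent set in those colors after the substitution argument, using that $H$ is connected so it cannot be split unless it sits inside one part), while a copy in a color from $\{3,\dots,k\}$ would lie entirely within one part and contradict the inductive choice of $G'$. This yields $gr_k(K_3:H) \geq (R(H,H)-1)^{k/2}+1$.

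For the odd case $k=2m+1$, I would take the even construction on $k-1$ colors with $(R(H,H)-1)^{(k-1)/2}$ vertices and blow up each of its vertices by a single new color $k$ used on a clique of $n-1$ vertices — equivalently, substitute a monochromatic-$K_{n-1}$ (in color $k$) into each vertex of the even-case coloring. Since $H=K_n$ (here $H$ is a complete graph of order $n$), the color-$k$ class inside each part is a clique on only $n-1$ vertices and so contains no $K_n$, and no color-$k$ edges run between parts, so no monochromatic $H$ in color $k$ appears; the Gallai property and absence of monochromatic $H$ in the other colors follow as before. This gives the claimed factor $(n-1)$ and the bound $(n-1)\cdot(R(H,H)-1)^{(k-1)/2}+1$.

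The step I expect to be the main obstacle is verifying cleanly that no monochromatic copy of $H$ arises in the \emph{merged} colors $1$ and $2$ after the blow-up, because a priori a copy of $H=K_n$ could use vertices spread across several parts connected by color-$1$ edges. The key observation that resolves this is that $H$ being a complete (hence connected) graph forces any monochromatic copy to project injectively onto a monochromatic complete subgraph of the extremal $2$-coloring — distinct vertices of the $K_n$ lying in the same part would need an internal edge of that color, but internal edges only use colors $\{3,\dots,k\}$, so the $K_n$ meets each part at most once and thus embeds as a monochromatic $K_n$ in $K_{R(H,H)-1}$, contradicting its extremality. Making this projection argument rigorous, together with a careful induction base at $k=2$ (even) and $k=1$ (odd, a single monochromatic $K_{n-1}$), is the crux; the remaining verifications are routine.
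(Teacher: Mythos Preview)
Your construction is correct and is exactly the standard blow-up argument for this lower bound. Note, however, that the paper does not actually prove Lemma~\ref{Lem:LowBnd}: it is quoted from \cite{M18} as a known result. The paper does carry out essentially the same recursive blow-up construction you describe, but only later and in a more refined form tailored to Theorem~\ref{Thm:grK5}, where different base graphs are chosen depending on the parities of $r,s,t$. So your proposal matches the underlying idea the paper relies on, even though there is no in-paper proof of this particular lemma to compare against.
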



We next present several tables of values which concisely capture computations that will be used throughout the proof. Each cell contains the ratio of the corresponding type $g(r_{1}, s_{1}, t_{1})$ in relation to the order of the whole graph $g(r, s, t)$ in the given case. For example, the top left cell of Table~\ref{Table:1} contains the value of the ratio
$$
\frac{g(r, s, t - 1)}{g(r, s, t)}
$$
in the case $(c_{1})$.

Each row of the following tables represents a case (perhaps with some subcases) and each column represents a Type, one of the referenced inequalities listed above it. In some cells containing two values, these values correspond to the extra assumptions listed in the far right column. The cases marked with $-$ do not occur because of base assumptions. The maximum value in each column yields an upper bound on the ratio for that type over all the cases, and these are displayed in Inequalities~\eqref{Ineq:T1}-\eqref{Ineq:T22}.

\newpage

Table~\ref{Table:1} contains the case analysis for the following inequalities:
\begin{eqnarray}
& & \text{Type T1: } \frac{g(r, s, t - 1)}{g(r, s, t)} \leq \frac{1}{2}, \label{Ineq:T1}\\
& & \text{Type T2: } \frac{g(r, s, t - 2)}{g(r, s, t)} \leq \frac{1}{5}, \label{Ineq:T2}\\
& & \text{Type T3: } \frac{g(r, s - 1, t + 1)}{g(r, s, t)} \leq \frac{2}{3}, \label{Ineq:T3}\\
& & \text{Type T4: } \frac{g(r, s - 1, t)}{g(r, s, t)}  \leq \frac{1}{3} , \label{Ineq:T4}\\
& & \text{Type T5: } \frac{g(r, s - 1, t - 1)}{g(r, s, t)} \leq \frac{1}{8}, \label{Ineq:T5}\\
& & \text{Type T6: } \frac{g(r, s - 2, t + 2)}{g(r, s, t)} \leq \frac{13}{36}. \label{Ineq:T6}
\end{eqnarray}

\begin{table}[H]
$$
\begin{array}{|c|c|c|c|c|c|c|l|}
\hline
\text{Case} & \text{\eqref{Ineq:T1}T1} & \text{\eqref{Ineq:T2}T2} & \text{\eqref{Ineq:T3}T3} & \text{\eqref{Ineq:T4}T4} & \text{\eqref{Ineq:T5}T5} & \text{\eqref{Ineq:T6}T6} & \\
\hline
(c_{1}) &  
\frac{2}{5} & 
\frac{1}{5} & 
\frac{8}{17} & 
\begin{array}{|c|}
\hline
\frac{3}{17}\\
\hline
\frac{16}{85}\\
\hline
\end{array} & 
\frac{8}{85} & 
\frac{5}{17} & 
\begin{array}{|c|}
\hline
t = 0\\
\hline
t \geq 2 \\
\hline
\end{array}  \\ 
\hline
(c_{2}) &  
\frac{1}{2} & 
\frac{1}{5} & 
\frac{8}{17} & 
\frac{4}{17} & 
\begin{array}{|c|}
\hline
\frac{3}{34}\\
\hline
\frac{8}{85}\\
\hline
\end{array} & 
\frac{5}{17} & 
\begin{array}{|c|}
\hline
t = 1 \\
\hline
t \geq 3\\
\hline
\end{array} \\ 
\hline
(c_{3}) &  
- & 
- & 
\frac{2}{15} & 
\frac{1}{3} & 
- & 
\frac{16}{51} & 
 \\ 
\hline
(c_{4}) &  
- & 
- & 
- & 
- & 
- & 
- & 
 \\ 
\hline
(c_{5}) &  
\begin{array}{|c|}
\hline
\frac{3}{8}\\
\hline
\frac{2}{5}\\
\hline
\end{array} & 
\frac{1}{5} & 
\frac{1}{8} & 
\frac{1}{4} & 
\frac{1}{8} & 
\frac{5}{17} & 
\begin{array}{|c|}
\hline
t = 1\\
\hline
t \geq 3 \\
\hline
\end{array} \\ 
\hline
(c_{6}) &  
\begin{array}{|c|}
\hline
\frac{4}{13}\\
\hline
\frac{2}{5}\\
\hline
\frac{72}{221}\\
\hline
\end{array} & 
\frac{1}{5} & 
\frac{24}{65} & 
\frac{48}{221} & 
\frac{24}{221} & 
\frac{5}{17} & 
\begin{array}{|c|}
\hline
\genfrac{}{}{0pt}{0}{t = 1}{s = 0}\\
\hline
t \geq 3\\
\hline
\genfrac{}{}{0pt}{0}{t = 1}{s \geq 2}\\
\hline
\end{array} \\ 
\hline
(c_{7}) &  
\frac{1}{2} & 
\frac{3}{16} & 
\frac{1}{8} & 
\frac{5}{16} & 
\frac{1}{8} & 
\frac{5}{17} & 
 \\ 
\hline
(c_{8}) &  
\frac{2}{5} & 
\frac{1}{5} & 
\frac{13}{24} & 
\begin{array}{|c|}
\hline
\frac{3}{17}\\
\hline
\frac{13}{60}\\
\hline
\end{array} & 
\frac{13}{120} & 
\frac{5}{17} & 
\begin{array}{|c|}
\hline
t = 0 \\
\hline
t \geq 2 \\
\hline
\end{array} \\ 
\hline
(c_{9}) &  
\frac{1}{2} & 
\frac{2}{13} & 
\frac{24}{221} & 
\frac{60}{221} & 
- & 
\frac{5}{17} & 
 \\ 
\hline
(c_{10}) &  
\frac{1}{2} & 
\frac{1}{5} & 
\frac{13}{120} & 
\frac{13}{48} & 
\begin{array}{|c|}
\hline
\frac{8}{85}\\
\hline
\frac{13}{120}\\
\hline
\frac{3}{34}\\
\hline
\frac{1}{12}\\
\hline
\end{array} & 
\frac{5}{17} & 
\begin{array}{|c|}
\hline
s, t \geq 3\\
\hline
\genfrac{}{}{0pt}{0}{s = 1}{t \geq 3}\\
\hline
\genfrac{}{}{0pt}{0}{s \geq 3}{t = 1}\\
\hline
s = t = 1\\
\hline
\end{array} \\ 
\hline
(c_{11}) &  
- & 
- & 
\frac{2}{3} & 
\frac{1}{3} & 
- & 
\frac{13}{36} & 
 \\ 
\hline
\hline
\text{Max} &  
\frac{1}{2} & 
\frac{1}{5} & 
\frac{2}{3} & 
\frac{1}{3} & 
\frac{1}{8} & 
\frac{13}{36} & 
 \\ 
\hline
\end{array}
$$
\caption{Types T1 - T6. \label{Table:1}}
\end{table}

\newpage

Table~\ref{Table:2} contains the case analysis for the following inequalities:
\begin{eqnarray}
& & \text{Type T7: } \frac{g(r, s-2, t +1)}{g(r, s, t)} \leq \frac{13}{72}, \label{Ineq:T7}\\
& & \text{Type T8: } \frac{g(r, s-2, t)}{g(r, s, t)} \leq \frac{1}{17}, \label{Ineq:T8}\\
& & \text{Type T9: } \frac{g(r-1, s+1, t)}{g(r, s, t)} \leq \frac{3}{4}, \label{Ineq:T9}\\
& & \text{Type T10: } \frac{g(r-1, s+1, t-1)}{g(r, s, t)}  \leq \frac{17}{48}, \label{Ineq:T10}\\
& & \text{Type T11: } \frac{g(r-1, s, t+1)}{g(r, s, t)} \leq \frac{5}{13}, \label{Ineq:T11}\\
& & \text{Type T12: } \frac{g(r-1, s, t)}{g(r, s, t)} \leq \frac{5}{26}. \label{Ineq:T12}
\end{eqnarray}

\begin{table}[H]
$$
\begin{array}{|c|c|c|c|c|c|c|l|}
\hline
\text{Case} & \text{\eqref{Ineq:T7}} & \text{\eqref{Ineq:T8}} & \text{\eqref{Ineq:T9}} & \text{\eqref{Ineq:T10}} & \text{\eqref{Ineq:T11}} & \text{\eqref{Ineq:T12}} & \\
\hline
(c_{1}) &  
\frac{2}{17} & 
\frac{1}{17} & 
\frac{24}{R} & 
\frac{48}{5 R} & 
\frac{13}{R} & 
\frac{26}{5 R} & 
 \\ 
\hline
(c_{2}) &  
\frac{5}{34} & 
\frac{1}{17} & 
\frac{24}{R} & 
\frac{48}{5 R} & 
\frac{13}{R} & 
\frac{26}{5 R} & 
 \\ 
\hline
(c_{3}) &  
\frac{8}{51} & 
\frac{1}{17} & 
\frac{24}{R} & 
- & 
- & 
- & 
 \\ 
\hline
(c_{4}) &  
- & 
- & 
\frac{3}{4} & 
- & 
- & 
- & 
 \\ 
\hline
(c_{5}) &  
\frac{2}{17} & 
\frac{1}{17} & 
\frac{221}{8R} & 
\begin{array}{|c|}
\hline
\frac{9}{R} \\
\hline
\frac{221}{20\cdot R} \\
\hline
\end{array} & 
\frac{15}{R} & 
\frac{6}{R} & 
\begin{array}{|c|}
\hline
t = 1 \\
\hline
t \geq 3 \\
\hline
\end{array} \\ 
\hline
(c_{6}) &  
\frac{2}{17} & 
\frac{1}{17} & 
\frac{8}{13} & 
\begin{array}{|c|}
\hline
\frac{3}{13} \\
\hline
\frac{16}{65} \\
\hline
\end{array} & 
\frac{5}{13} & 
\frac{2}{13} & 
\begin{array}{|c|}
\hline
t = 1 \\
\hline
t \geq 3 \\
\hline
\end{array} \\ 
\hline
(c_{7}) &  
\frac{5}{34} & 
\frac{1}{17} & 
\frac{221}{8R} & 
\frac{221}{16 R} & 
\frac{15}{R} & 
\frac{15}{2 R} & 
 \\ 
\hline
(c_{8}) &  
\frac{2}{17} & 
\frac{1}{17} & 
\frac{17}{24} & 
\frac{17}{60} & 
\frac{1}{3} & 
\frac{2}{15} & 
 \\ 
\hline
(c_{9}) &  
\frac{5}{34} & 
\frac{1}{17} & 
\frac{8}{13} & 
\frac{4}{13} & 
\frac{5}{13} & 
\frac{5}{26} & 
 \\ 
\hline
(c_{10}) &  
\frac{5}{34} & 
\frac{1}{17} & 
\frac{17}{24} & 
\frac{17}{48} & 
\frac{1}{15} & 
\frac{1}{6} & 
 \\ 
\hline
(c_{11}) &  
\frac{13}{72} & 
\begin{array}{|c|}
\hline
\frac{1}{18} \\
\hline
\end{array} & 
\frac{17}{24} & 
- & 
- & 
- & 
\begin{array}{|c|}
\hline
s = 2 \\
\hline
\end{array}  \\ 
\hline
\hline
\text{Max} &  
\frac{13}{72} & 
\frac{1}{17} & 
\frac{3}{4} & 
\frac{17}{48} & 
\frac{5}{13} & 
\frac{5}{26} & 
 \\ 
\hline
\end{array}
$$
\caption{Types T7 - T12. \label{Table:2}}
\end{table}


\newpage

Table~\ref{Table:3} contains the case analysis for the following inequalities:
\begin{eqnarray}
& & \text{Type T13: } \frac{g(r - 1, s, t - 1)}{g(r, s, t)} \leq \frac{1}{13}, \label{Ineq:T13}\\
& & \text{Type T14: } \frac{g(r - 1, s - 1, t + 2)}{g(r, s, t)} \leq \frac{5}{24}, \label{Ineq:T14}\\
& & \text{Type T15: } \frac{g(r - 1, s - 1, t + 1)}{g(r, s, t)} \leq \frac{1}{9}, \label{Ineq:T15}\\
& & \text{Type T16: } \frac{g(r - 1, s - 1, t)}{g(r, s, t)}  \leq \frac{1}{24}, \label{Ineq:T16}\\
& & \text{Type T17: } \frac{g(r - 2, s + 2, t)}{g(r, s, t)} \leq \frac{18}{R}, \label{Ineq:T17}\\
& & \text{Type T18: } \frac{g(r - 2, s + 1, t + 1)}{g(r, s, t)} \leq \frac{12}{R}. \label{Ineq:T18}
\end{eqnarray}

\begin{table}[H]
$$
\begin{array}{|c|c|c|c|c|c|c|l|}
\hline
\text{Case} & \text{\eqref{Ineq:T13}} & \text{\eqref{Ineq:T14}} & \text{\eqref{Ineq:T15}} & \text{\eqref{Ineq:T16}} & \text{\eqref{Ineq:T17}} & \text{\eqref{Ineq:T18}} & \\
\hline
(c_{1}) &  
\frac{13}{5R} & 
\frac{120}{17R} & 
\frac{48}{17R} & 
\frac{24}{17R} & 
\frac{17}{R} & 
\frac{8}{R} & 
 \\ 
\hline
(c_{2}) &  
\begin{array}{|c|}
\hline
\frac{13}{5R} \\
\hline
\frac{36}{17R} \\
\hline
\frac{2}{R}\\
\hline
\end{array} & 
\frac{120}{17R} & 
\frac{60}{17R} & 
\frac{24}{17R} & 
\frac{17}{R} & 
\frac{8}{R} & 
\begin{array}{|c|}
\hline
t \geq 1 \\
\hline
t = 1, s \geq 2 \\
\hline
t - 1 = s = 0 \\
\hline
\end{array} \\ 
\hline
(c_{3}) &  
- & 
\begin{array}{|c|}
\hline
\frac{120}{17R} \\
\hline
\frac{20}{3R} \\
\hline
\end{array} & 
\frac{13}{3R} & 
\begin{array}{|c|}
\hline
\frac{24}{17R} \\
\hline
\frac{4}{3R} \\
\hline
\end{array} & 
\frac{17}{R} & 
\frac{34}{3R} & 
\begin{array}{|c|}
\hline
s \geq 3 \\
\hline
s = 1 \\
\hline
\end{array} \\ 
\hline
(c_{4}) &  
- & 
- & 
- & 
- & 
\frac{18}{R} & 
\frac{12}{R} & 
 \\ 
\hline
(c_{5}) &  
\frac{3}{R} & 
\frac{65}{8R} & 
\frac{13}{4R} & 
\frac{13}{8R} & 
\frac{17}{R} & 
\frac{85}{8R} & 
 \\ 
\hline
(c_{6}) &  
\frac{1}{13} & 
\frac{40}{221} & 
\frac{16}{221} & 
\frac{8}{221} & 
\frac{17}{R} & 
\frac{120}{13R} & 
 \\ 
\hline
(c_{7}) &  
\frac{3}{R} & 
\frac{65}{8R} & 
\frac{65}{16R} & 
\frac{13}{8R} & 
\frac{17}{R} & 
\frac{85}{8R} & 
 \\ 
\hline
(c_{8}) &  
\frac{1}{15} & 
\frac{5}{24} & 
\frac{1}{12} & 
\frac{1}{24} & 
\frac{17}{R} & 
\frac{221}{24R} & 
 \\ 
\hline
(c_{9}) &  
\frac{1}{13} & 
\frac{40}{221} & 
\frac{20}{221} & 
\frac{8}{221} & 
\frac{17}{R} & 
\frac{120}{13R} & 
 \\ 
\hline
(c_{10}) &  
\begin{array}{|c|}
\hline
\frac{1}{15} \\
\hline
\frac{1}{16}\\
\hline
\end{array} & 
\frac{5}{24} & 
\frac{5}{48} & 
\frac{1}{24} & 
\frac{17}{R} & 
\frac{221}{24R} & 
\begin{array}{|c|}
\hline
t \geq 1 \\
\hline
t = 1\\
\hline
\end{array} \\ 
\hline
(c_{11}) &  
- & 
\frac{5}{24} & 
\frac{1}{9} & 
\frac{1}{24} & 
\frac{17}{R} & 
\frac{34}{3R} & 
 \\ 
\hline
\hline
\text{Max} &  
\frac{1}{13} & 
\frac{5}{24} & 
\frac{1}{9} & 
\frac{1}{24} & 
\frac{18}{R} & 
\frac{12}{R} & 
 \\ 
\hline
\end{array}
$$
\caption{Types T13 - T18. \label{Table:3}}
\end{table}

\newpage

Table~\ref{Table:4} contains the case analysis for the following inequalities:
\begin{eqnarray}
& & \text{Type T19: } \frac{g(r - 2, s + 1, t)}{g(r, s, t)} \leq \frac{6}{R}, \label{Ineq:T19}\\
& & \text{Type T20: } \frac{g(r - 2, s, t + 2)}{g(r, s, t)} \leq \frac{13}{2R}, \label{Ineq:T20}\\
& & \text{Type T21: } \frac{g(r - 2, s, t + 1)}{g(r, s, t)} \leq \frac{13}{4R}, \label{Ineq:T21}\\
& & \text{Type T22: } \frac{g(r - 2, s, t)}{g(r, s, t)}  \leq \frac{1}{R}. \label{Ineq:T22}
\end{eqnarray}

\begin{table}[H]
$$
\begin{array}{|c|c|c|c|c|l|}
\hline
\text{Case} & \text{\eqref{Ineq:T19}} & \text{\eqref{Ineq:T20}} & \text{\eqref{Ineq:T21}} & \text{\eqref{Ineq:T22}} & \\
\hline
(c_{1}) &  
\begin{array}{|c|}
\hline
\frac{16}{5R} \\
\hline
\frac{3}{R}\\
\hline
\end{array} & 
\frac{5}{R} & 
\frac{2}{R} & 
\frac{1}{R} & 
\begin{array}{|c|}
\hline
t \geq 2 \\
\hline
t = 0 \\
\hline
\end{array} \\ 
\hline
(c_{2}) &  
\frac{4}{R} & 
\frac{5}{R} & 
\frac{5}{2R} & 
\frac{1}{R} & 
 \\ 
\hline
(c_{3}) &  
\frac{17}{3R} & 
\frac{16}{3R} & 
\frac{8}{3R} & 
\frac{1}{R} & 
 \\ 
\hline
(c_{4}) &  
\frac{6}{R} & 
\frac{13}{2R} & 
\frac{13}{4R} & 
\frac{1}{R} & 
 \\ 
\hline
(c_{5}) &  
\frac{17}{4R} & 
\frac{5}{R} & 
\frac{2}{R} & 
\frac{1}{R} & 
 \\ 
\hline
(c_{6}) &  
\frac{48}{13R} & 
\frac{5}{R} & 
\frac{2}{R} & 
\frac{1}{R} & 
 \\ 
\hline
(c_{7}) &  
\frac{85}{16R} & 
\frac{5}{R} & 
\frac{1}{2R} & 
\frac{1}{R} & 
 \\ 
\hline
(c_{8}) &  
\begin{array}{|c|}
\hline
\frac{221}{60R} \\
\hline
\frac{3}{R} \\
\hline
\end{array} & 
\frac{5}{R} & 
\frac{2}{R} & 
\frac{1}{R} & 
\begin{array}{|c|}
\hline
t \geq 2 \\
\hline
t = 0 \\
\hline
\end{array}  \\ 
\hline
(c_{9}) &  
\frac{60}{13R} & 
\frac{5}{R} & 
\frac{1}{2R} & 
\frac{1}{R} & 
 \\ 
\hline
(c_{10}) &  
\frac{221}{48R} & 
\frac{5}{R} & 
\frac{1}{2R} & 
\frac{1}{R} & 
 \\ 
\hline
(c_{11}) &  
\frac{17}{3R} & 
\frac{221}{36R} & 
\frac{221}{72R} & 
\frac{1}{R} & 
 \\ 
\hline
\hline
\text{Max} &  
\frac{6}{R} & 
\frac{13}{2R} & 
\frac{13}{4R} & 
\frac{1}{R} & 
 \\ 
\hline
\end{array}
$$
\caption{Types T19 - T22. \label{Table:4}}
\end{table}



Next we provide several lemmas specific to the proof of Theorem~\ref{Thm:Main} but first some definitions.

We call a part $X$ of a Gallai partition \emph{free}, 
if it contains neither red nor blue edges. We call a part $red$ ($blue$) if it contains red (respectively blue) edges, but no red (blue) copy of a $K_3,$ and no blue (red) edges. Note that these notations do not characterize all parts since clearly a part $X$ might fall into none of these categories.

Let $H$ be a Gallai colored complete graph where red and blue are the colors appearing on edges of the reduced graph. We call such a graph (or part of the partition) $H$ a $(R_i,B_j)$-graph if it contains neither a red copy of $K_i$ nor a blue copy of $K_j$.

Let $w_{i, j}(H) = \frac{|H|}{|G|}$ be the \emph{weight} of $H$ as a subgraph of an $(R_{i}, B_{j})$-graph $G$. 
For convenience, when a part $A$ of a Gallai partition of $H$ is assumed, let $H_{R}$ (and $H_{B}$) denote the sets of vertices in $H \setminus A$ with all red (respectively blue) edges to $A$.

For $(R_{3}, B_{3})$-graphs, we get the following.

\begin{lemma}\labelz{Lem:6.1} 
Let $H$ be an $(R_3,B_3)$-graph, whose parts are either free, red, or blue. Then $w_{5, 5}(H) \leq \frac{6.5}{R}$.
\end{lemma}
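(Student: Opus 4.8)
The plan is to work directly with a Gallai partition of $H$. Since $H$ is an $(R_3,B_3)$-graph, its reduced graph uses only red and blue and can contain no monochromatic triangle: three pairwise-red parts would yield a red $K_3$ by selecting one vertex from each, and symmetrically for blue. Hence, by $R(K_3,K_3)=6$, the reduced graph has at most five parts, and if it has exactly five it must be the unique $2$-coloring of $K_5$ with no monochromatic triangle, namely a red $5$-cycle together with its complementary blue $5$-cycle. I would first record this structural dichotomy and reduce attention to the five-part pentagon case, since configurations with fewer parts only make the weight smaller and should follow by the same estimates.

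The key device for bounding individual parts is a clique-propagation observation that comes from $G$ being an $(R_5,B_5)$-graph. If a part $A$ contains a red $K_a$ and $N\subseteq G$ denotes the set of vertices joined to all of $A$ by red edges, then $N$ can contain no red $K_{5-a}$: otherwise the red $K_a$ in $A$, the red $K_{5-a}$ in $N$, and the all-red edges between them would assemble a red $K_5$ in $G$, a contradiction; the symmetric statement holds in blue. Because each part of $H$ is free, red, or blue, its internal red and blue clique numbers are each at most $2$, so this observation sharply limits the red and blue clique structure permitted in the neighbor sets of any chosen part.

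Concretely, I would fix a largest part $A$. As the reduced graph is $2$-colored, every vertex of $H\setminus A$ sees $A$ entirely in red or entirely in blue, so $H = A \cup H_R \cup H_B$ with $H_R$ and $H_B$ contained in the all-red and all-blue neighborhoods of $A$ in $G$. Applying the propagation observation according to the type of $A$ bounds each of $|A|$, $|H_R|$, and $|H_B|$ as a fraction of $|G|$ in terms of $R=r(K_5,K_5)-1$, where the factor $R$ enters because the red (respectively blue) structure surrounding $A$ cannot be extended to a red (blue) $K_5$ and so lives inside a $2$-colored $(R_5,B_5)$-graph of order at most $R$. Summing these contributions around the red and blue cycles of the pentagon, and accounting for the fact that red and blue parts can be partially absorbed into the top red/blue level while free parts cannot, yields the claimed total $w_{5,5}(H)\le \tfrac{13}{2R}=\tfrac{6.5}{R}$.

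The step I expect to be the main obstacle is the final extremal bookkeeping. The target $\tfrac{13}{2R}$ is not symmetric across the five pentagon parts, so the worst case is an unbalanced configuration, and I must check that no assignment of part types (free/red/blue) together with part sizes around the two $5$-cycles pushes the sum of the per-part weights above $\tfrac{13}{2R}$. I would organize this as an optimization of the resulting linear expression in the part sizes subject to the propagation constraints, treating the free-part and red/blue-part contributions separately, with the pentagon case providing the binding configuration; verifying that the optimum is exactly $6.5/R$ rather than a larger constant is the delicate part of the argument.
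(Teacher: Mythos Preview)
Your proposal contains a real gap. You assert that ``configurations with fewer parts only make the weight smaller'' and then focus on the five-part pentagon as the binding case. This is backwards. A free part contributes at most $\tfrac{1}{R}$ to $w_{5,5}(H)$ by Inequality~\eqref{Ineq:T22}, whereas a red or blue part can contribute up to $\tfrac{3.25}{R}$ by Inequality~\eqref{Ineq:T21}. In the pentagon every part has both a red and a blue neighbour in the reduced graph, so (to avoid a monochromatic triangle in $H$) every part must in fact be free, and the pentagon yields only $\tfrac{5}{R}$. The actual extremum is two red parts joined by blue edges, giving $\tfrac{3.25}{R}+\tfrac{3.25}{R}=\tfrac{6.5}{R}$: a two-part configuration, precisely what your reduction discards.

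The paper's argument is a short case split with no optimization. If every part is free, then $w_{5,5}(H)\le\tfrac{5}{R}$. Otherwise pick a red part $X_1$ (blue is symmetric). Since $H$ is an $(R_3,B_3)$-graph, the red edge in $X_1$ forces $H_R=\emptyset$, and any blue edge in $H_B$ together with a vertex of $X_1$ would form a blue triangle, so $H_B$ has no blue edges. Then all inter-part edges in $H_B$ are red, so $H_B$ has at most two parts, and if one of them is red it must be the only part; hence $w_{5,5}(H_B)\le\max\{\tfrac{2}{R},\tfrac{3.25}{R}\}$ and $w_{5,5}(H)\le\tfrac{6.5}{R}$.

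Finally, your clique-propagation device is aimed at the wrong constraint. The structural restrictions on $H_R$ and $H_B$ that drive the proof come from $H$ being $(R_3,B_3)$, not from $G$ being $(R_5,B_5)$. The factor $R$ enters only through the per-part induction bounds \eqref{Ineq:T21} and \eqref{Ineq:T22} on $|X|/g(r,s,t)$, not through any direct $K_5$-avoidance argument inside $G$.
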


\begin{proof}
In order to avoid a red or blue triangle, the graph $H$ has $t \leq 5 = R(3,3)-1$ parts. If all parts are free, then $w_{5, 5}(H) \leq \frac{t}{R} \leq \frac{5}{R}$ by Inequality~\eqref{Ineq:T22}.
Suppose $H$ has a red part (and note that a symmetric argument also works for a blue part). Then $H_R$ is empty and $H_B$ contains no blue edges. If all parts of $H_B$ are free, then $w_{5, 5}(H_{B}) \leq \frac{2}{R}$ by Inequality~\eqref{Ineq:T22} since there can be at most two parts in $H_{B}$ (with all red edges in between them). On the other hand, if $H_B$ contains a red part, then $w_{5, 5}(H_B) \leq \frac{3.25}{R}$ by Inequality~\eqref{Ineq:T21} since $H$ consists of two red parts joined by blue edges. In either case, we have $w_{5, 5}(H) \leq \frac{6.5}{R}$.
\end{proof}

For $(R_{3}, B_{4})$-graphs, we get the following.

\begin{lemma}\labelz{Lem:6.2} 
Let $H$ be a $(R_3,B_4)$-graph, whose parts are either free, red, or blue. Then
\bd
\item{\rm(i)} $w_{5, 5}(H) \leq \frac{9.75}{R}$, and furthermore
\item{\rm(ii)} if $H$ contains no red part, then $w_{5, 5}(H) \leq \frac{9.5}{R}$.
\ed
\end{lemma}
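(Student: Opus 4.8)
The plan is to mirror the structure of the proof of Lemma~\ref{Lem:6.1}, but now working inside an $(R_3, B_4)$-graph, where the bound on the number of parts is governed by $R(3,4) = 9$ rather than $R(3,3) = 6$. First I would invoke the Gallai partition of $H$ and observe that, since there is no rainbow triangle and red/blue are the two reduced-graph colors, the reduced graph on the $t$ parts is itself a $2$-colored complete graph avoiding a red $K_3$ and a blue $K_4$; hence $t \leq R(3,4) - 1 = 8$. The baseline estimate is the all-free case: each free part contributes weight at most $\tfrac{1}{R}$ by Inequality~\eqref{Ineq:T22}, giving $w_{5,5}(H) \leq \tfrac{8}{R}$. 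This already handles part (ii) in the free subcase, and is comfortably below both target bounds, so the work is in accounting for non-free (red or blue) parts, which can carry more weight but force structural restrictions on the rest of $H$.

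For part (i), I would split on the presence of a red part. If $H$ has no red part, I defer to (ii). If $H$ has a red part $A$, then to avoid a red triangle no other part may send red edges to $A$ in a way that completes a red $K_3$ with $A$'s interior red edge; concretely $H_R$ (the parts joined to $A$ in red) must itself be red/blue-restricted, while $H_B$ (joined in blue) is constrained by the blue $K_4$-freeness. The governing identity is that $A$ together with the parts reachable in red forms a smaller $(R_3, B_j)$-configuration, and I would bound each piece using the sharpest applicable row of Tables~\ref{Table:3}--\ref{Table:4} — the Type T20/T21 inequalities \eqref{Ineq:T20}--\eqref{Ineq:T21} for pairs of parts joined monochromatically, and \eqref{Ineq:T22} for isolated free parts. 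Summing the worst-case contributions (one red part carrying up to $\tfrac{3.25}{R}$ via \eqref{Ineq:T21}, the blue-neighborhood contributing its own $(R_3, B_3)$ or $(R_3, B_4)$ budget) should yield the claimed $\tfrac{9.75}{R}$.

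For part (ii), with no red part present, every non-free part is blue, so $H$ avoids red $K_3$ at the reduced level and additionally each blue part internally avoids a blue $K_3$; I would argue that the extra blue structure can only save weight relative to (i), peeling off $\tfrac{0.25}{R}$ to reach $\tfrac{9.5}{R}$. The cleanest route is to fix a blue part $B$ and analyze $H_R$ and $H_B$ as in Lemma~\ref{Lem:6.1}: $H_B$ must be an $(R_3, B_3)$-graph (to avoid extending $B$'s blue content to a $K_4$), so $w_{5,5}(H_B) \leq \tfrac{6.5}{R}$ by Lemma~\ref{Lem:6.1}, while $H_R$ contributes a bounded number of red-joined free/red parts.

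\medskip

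The step I expect to be the main obstacle is the bookkeeping in part (i): verifying that the worst-case arrangement of one red part together with its red- and blue-neighborhoods actually attains (and does not exceed) $\tfrac{9.75}{R}$ requires checking every admissible distribution of the $\leq 8$ parts among free, red, and blue, and confirming that no combination of the Type inequalities sums past the target. The risk is that a naive sum of column maxima overshoots, so the argument must exploit the mutual exclusivity forced by $R(3,3)$ and $R(3,4)$ — namely that the presence of a red part sharply limits how many further parts can coexist — rather than adding bounds blindly.
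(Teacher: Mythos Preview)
Your overall strategy matches the paper's, but two of the key structural observations are misidentified, and as written the bounds do not close.

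For part (ii), you fix a blue part $B$ and claim that $H_B$ (the blue neighbourhood of $B$) is an $(R_3,B_3)$-graph, then apply Lemma~\ref{Lem:6.1} to get $w_{5,5}(H_B)\le \tfrac{6.5}{R}$. This is too weak: since $B$ already contains a blue edge and every vertex of $H_B$ is joined to $B$ in blue, a single blue edge inside $H_B$ would complete a blue $K_4$. Hence $H_B$ is actually an $(R_3,B_2)$-graph, with \emph{no} blue edges at all, and (under the hypothesis of (ii) that there are no red parts) it consists of at most two free parts joined by red. With your bound, $w_{5,5}(B)+w_{5,5}(H_B)\ge \tfrac{3.25}{R}+\tfrac{6.5}{R}=\tfrac{9.75}{R}$ before you even account for $H_R$, so you cannot reach $\tfrac{9.5}{R}$. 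The paper instead splits the remaining mass as $\tfrac{3.25}{R}+\tfrac{4.25}{R}+\tfrac{2}{R}=\tfrac{9.5}{R}$, which requires this sharper $(R_3,B_2)$ observation for one side and an $(R_2,B_4)$ analysis (at most three blue-joined parts, at most one of them blue) for the other.

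For part (i), the crucial fact you gloss over is that if $A$ is a red part then $H_R$ is \emph{empty}: the red edge inside $A$ together with any vertex of $H_R$ would already form a red $K_3$. Your phrasing ``$H_R$ \dots must itself be red/blue-restricted'' and the later worry about ``checking every admissible distribution of the $\le 8$ parts'' suggest you are still allowing $H_R\neq\emptyset$. Once $H_R=\emptyset$, the entire argument collapses to $w_{5,5}(A)+w_{5,5}(H_B)\le \tfrac{3.25}{R}+\tfrac{6.5}{R}=\tfrac{9.75}{R}$ via Lemma~\ref{Lem:6.1}, with no further case analysis needed.
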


\begin{proof}
Since $H$ is an $(R_3,B_4)$-graph, $H$ has $t \leq 8 = R(3,4)-1$ parts. If all parts are free, then $w_{5, 5}(H) \leq \frac{t}{R} \leq \frac{8}{R}$ by Inequality~\eqref{Ineq:T22}. Suppose first that $H$ contains no red parts. Let $X_1$ be a blue part, so $w_{5, 5}(X_1) = \frac{3.25}{R}$ by Inequality~\eqref{Ineq:T21}. Then $H_R$ is an $(R_{2},B_{4})$-graph and
$H_B$ is an $(R_{3},B_{2})$-graph. Similar arguments as in the proof of Lemma~\ref{Lem:6.1} lead to
$$
w_{5, 5}(H_{B}) \leq \max \left\{3 \cdot \frac{1}{R}, \frac{3.25}{R} + \frac{1}{R}\right\} = \frac{4.25}{R}
$$
and
$$
w_{5, 5}(H_{R}) \leq 2 \cdot \frac{1}{R} = \frac{2}{R}
$$
since there is no red part. This gives
$$
w_{5, 5}(H) \leq \frac{1}{R}\left(3.25 + 4.25 + 2\right) = \frac{9.5}{R}.
$$

Now suppose that $H$ contains a red part $X_1$. Then $H_R$ is empty and $H_B$ is an $(R_{3},B_{3})$-graph. By Inequality~\eqref{Ineq:T21} and Lemma~\ref{Lem:6.1}, we obtain
$$
w_{5, 5}(H) \leq w_{5, 5}(X_1) + w_{5, 5}(H_B) \leq \frac{1}{R}\left(3.25 + 6.5\right) = \frac{9.75}{R}.
$$
\end{proof}

The sharpness of Lemma~\ref{Lem:6.2} is given by the following examples:
\bd
\item{(1)} Three red parts joined by blue edges\\
\item{(2)} Two blue parts joined by red edges, which are joined by blue edges with a red part
\ed

For $(R_{5}, B_{3})$-graphs, we get the following.

\begin{lemma}\labelz{Lem:6.3} 
Let $H$ be an $(R_{5},B_{3})$-graph, whose parts are either free, red or blue. Then
\bd
\item{\rm (i)} $w_{5, 5}(H) \leq \frac{13}{R}$ if $H$ contains only free parts;
\item{\rm (ii)} $w_{5, 5}(H) \leq \frac{13}{R}$ if $H$ contains at least one blue part;
\item{\rm (iii)} $w_{5, 5}(H) \leq \frac{12.25}{R}$ if $H$ contains exactly one red part;
\item{\rm (iv)} $w_{5, 5}(H) \leq \frac{14.5}{R}$ if $H$ contains at least two red parts but no two red parts joined by blue edges;
\item{\rm (v)} $w_{5, 5}(H) \leq \frac{13.5}{R}$ if $H$ contains exactly two red parts and they are joined by blue edges;
\item{\rm (vi)} $w_{5, 5}(H) \leq \frac{16.25}{R}$.
\ed
\end{lemma}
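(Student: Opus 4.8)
The plan is to run the weight bookkeeping on the Gallai partition of $H$, whose reduced graph is $2$-colored in red and blue. Since $H$ is an $(R_5,B_3)$-graph, this reduced graph has no red $K_5$ and no blue $K_3$, so by $R(3,5)=14$ (Theorem~\ref{Thm:R35}) it has at most $13$ parts. I would fix the per-part budget once and reuse it throughout: a free part contributes weight at most $\frac1R$ by~\eqref{Ineq:T22}, while a red part or a blue part contributes at most $\frac{13}{4R}$ by~\eqref{Ineq:T21} (such a part is triangle-free in its nonabsent color, so its $K_5$-constraint may be relaxed to a $K_3$-constraint while the absent color is discarded). I would also record at the outset that this bookkeeping is symmetric under interchanging red and blue, because red and blue parts carry the same budget; this lets me apply Lemma~\ref{Lem:6.2} verbatim to $(R_4,B_3)$-graphs.

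Part~(i) is then immediate: at most $13$ free parts of weight at most $\frac1R$ each. For~(ii) I would pick a blue part $X_1$; its internal blue edge together with any part joined to it in blue would form a blue $K_3$, so every other part is joined to $X_1$ in red and $H=X_1\cup H_R$. A red $K_4$ inside $H_R$ plus one vertex of $X_1$ would be a red $K_5$, so $H_R$ is an $(R_4,B_3)$-graph, and the red/blue-symmetric form of Lemma~\ref{Lem:6.2} gives $w_{5,5}(H_R)\le\frac{39}{4R}$, whence $w_{5,5}(H)\le\frac{13}{4R}+\frac{39}{4R}=\frac{13}{R}$. Having disposed of blue parts, for~(iii)--(vi) I may assume every part is free or red, since any blue part places $H$ in~(ii), whose bound $\frac{13}{R}$ lies below all remaining targets.

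For the red cases I would fix a red part $X_1$ and split $H\setminus X_1$ into the parts $H_R$ joined to $X_1$ in red and the parts $H_B$ joined to $X_1$ in blue. The red edge inside $X_1$ forbids a red $K_3$ in $H_R$, so $H_R$ is an $(R_3,B_3)$-graph governed by Lemma~\ref{Lem:6.1}; meanwhile $H_B$ admits no blue part and no blue reduced edge (either would complete a blue $K_3$ through $X_1$), so $H_B$ induces a red clique and has at most $4$ parts. In~(iii) both $H_R$ and $H_B$ are entirely free, giving $\frac{13}{4R}+\frac5R+\frac4R=\frac{12.25}{R}$. In~(v) the second red part lies in $H_B$, and since the two red parts are blue-joined the red clique in $H_B$ forces at most two further free parts beside it, giving $\frac{13}{4R}+\frac5R+\frac{21}{4R}=\frac{13.5}{R}$; case~(iv), where the two red parts are red-joined, is handled identically with the second red part placed in $H_R$ and stays comfortably within $\frac{14.5}{R}$.

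Finally~(vi) is the uniform bound and the crux. The structural key is that among the red parts both the red and the blue reduced subgraphs are triangle-free: three pairwise red-joined red parts contain a red $K_5$ (two internal red edges plus a third vertex), and three pairwise blue-joined red parts contain a blue $K_3$. Hence by $R(3,3)=6$ there are at most five red parts, each of weight at most $\frac{13}{4R}$, for a total of at most $\frac{65}{4R}=\frac{16.25}{R}$, and five red parts realizing the extremal $2$-coloring of $K_5$ (a red $C_5$ and its blue complement) attain it, while an $R(3,3)$-argument shows no sixth part can be appended. The hard part I anticipate is verifying that fewer red parts together with free parts never beat this: any free part joined in red to a red-joined pair of red parts completes a red $K_5$, so free parts become severely restricted once several red parts are present, and I expect the core of the argument to be a careful check, via an iterated $H_R/H_B$ split, that the budget surrendered by dropping a red part is never recouped by admissible free parts, so that the maximum $\frac{16.25}{R}$ is attained precisely at five red parts.
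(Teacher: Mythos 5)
Your treatment of parts (i)--(v) matches the paper's argument: the same per-part budgets ($\frac{1}{R}$ for free parts via~\eqref{Ineq:T22} and $\frac{13}{4R}$ for red or blue parts via~\eqref{Ineq:T21}), the same reduction of (ii) to the red/blue-symmetric form of Lemma~\ref{Lem:6.2}, and the same $H_R$/$H_B$ split around a distinguished red part for (iii)--(v), with the same arithmetic. Those parts are in order.

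Part (vi), which you yourself call the crux, is where there is a genuine gap. What you actually prove is that the red parts span a reduced graph with no red $K_3$ and no blue $K_3$, hence there are at most $5$ of them, contributing at most $\frac{65}{4R}=\frac{16.25}{R}$. That bounds only the all-red configuration; it says nothing about configurations mixing red parts with free parts, and your closing sentence explicitly defers that case to a ``careful check'' you do not perform. A priori, three red parts plus a collection of free parts could exceed $\frac{16.25}{R}$ unless one shows the free parts cannot recoup the weight of the dropped red parts, and nothing in your sketch establishes this. The paper closes (vi) with no iteration at all: fix one red part $X_1$ and split the remainder into $H_B$ (blue-joined to $X_1$) and $H_R$ (red-joined to $X_1$). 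Since $H_B$ has no blue reduced edges, it is either at most $4$ free parts, or one red part plus at most $2$ free parts, or two red parts, so $w_{5,5}(H_B)\le\max\left\{\frac{4}{R},\frac{5.25}{R},\frac{6.5}{R}\right\}=\frac{6.5}{R}$; since $H_R$ is an $(R_3,B_3)$-graph, it is at most $5$ free parts, or one red part plus at most $2$ free parts, or two red parts joined by blue edges, so $w_{5,5}(H_R)\le\frac{6.5}{R}$ as well. Adding $w_{5,5}(X_1)\le\frac{13}{4R}$ gives $\frac{3.25+6.5+6.5}{R}=\frac{16.25}{R}$ uniformly over all mixed configurations. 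You need this (or an equivalent) enumeration; as written, (vi) is not proved.
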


\begin{proof}
Since $H$ is an $(R_{5}, B_{3})$-graph, $H$ has $t \leq 13 = R(5,3)-1$ parts. If all parts are free, then $w_{5, 5}(H) \leq \frac{t}{R} \leq \frac{13}{R}$ by Inequality~\eqref{Ineq:T22}. This proves (i) and means that we may assume that $H$ contains at least one red or blue part.

Suppose first that $H$ contains a blue part $X_1$, so $w_{5, 5}(X_1) \leq \frac{3.25}{R}$ by Inequality~\eqref{Ineq:T21}. Then $H_B$ is empty and $H_R$ is an $(R_{4},B_{3})$-graph. By Lemma~\ref{Lem:6.2} (and symmetry of red and blue) we obtain
$$
w_{5, 5}(H) = w_{5, 5}(X_1) + w_{5, 5}(H_R) \leq \frac{1}{R}\left(3.25 + 9.75\right) = \frac{13}{R}.
$$
This shows (ii) and means that we may assume $H$ contains no blue parts for the remainder of the proof.

Suppose next that $H$ contains exactly one red part $X_{1}$. Then $H_R$ has at most $R(3,3)-1 = 5$ parts, $H_B$ has at most $R(5,2)-1 = 4$ parts, and each of these parts must be free. By Inequalities~\eqref{Ineq:T21} and~\eqref{Ineq:T22}, this gives
$$
w_{5, 5}(H) \leq \frac{1}{R}\left(3.25 + 5 + 4\right) = \frac{12.25}{R},
$$
confirming (iii).

Next suppose there are at least two red parts $X_{1}$ and $X_{2}$ but no two red parts joined by blue edges. With only red edges between the red parts, there can only be two such parts. Then $H_{R}$ (with respect to $X_{1}$) contains only free parts other than $X_{2}$ and $H_{B}$ also contains only free parts. As in (iii), $H_R$ has at most $R(3,3)-1 = 5$ parts, $H_B$ has at most $R(5,2)-1 = 4$ parts. By Inequalities~\eqref{Ineq:T21} and~\eqref{Ineq:T22}, this gives
\beqs
w_{5, 5}(H) & = & w_{5, 5}(X_{1}) + w_{5, 5}(X_{2}) + w_{5, 5}(H_{R} \setminus X_{2}) + w_{5, 5}(H_{B})\\
~ & \leq & \frac{1}{R} (3.25 + 3.25 + 4 + 4)\\
~ & = & \frac{14.5}{R},
\eeqs
confirming (iv).

Next suppose there are exactly two red parts $X_{1}$ and $X_{2}$ and they are joined by blue edges. Then $H_{R}$ (with respect to $X_{1}$) contains only free parts and at most $R(3, 3) - 1 = 5$ of them and $H_{B}$ contains only free parts other than $X_{2}$ but no blue edges at all so there can be at most $3$ total parts in $H_{B}$. By Inequalities~\eqref{Ineq:T21} and~\eqref{Ineq:T22}, this means
\beqs
w_{5, 5}(H) & = & w_{5, 5}(X_{1}) + w_{5, 5}(H_{R}) + w_{5, 5}(X_{2}) + w_{5, 5}(H_{B} \setminus X_{2})\\
~ & \leq & \frac{1}{R} (3.25 + 5 + 3.25 + 2)\\
~ & = & \frac{13.5}{R},
\eeqs
confirming (v).

Finally let $X_{1}$ be a red part. Then $H_{B}$ contains no blue edges so it contains at most $4$ free parts, one red part and at most $2$ free parts, or two red parts. By Inequalities~\eqref{Ineq:T21} and~\eqref{Ineq:T22}, this means that
$$
w_{5, 5}(H_{B}) \leq \max\left\{ \frac{4}{R}, \frac{3.25 + 2}{R}, \frac{2\cdot 3.25}{R} \right\} = \frac{6.5}{R}.
$$
On the other side, $H_{R}$ contains no red or blue triangle so it has at most $5$ free parts, one red part and at most $2$ free parts, or two red parts (joined by blue edges). By Inequalities~\eqref{Ineq:T21} and~\eqref{Ineq:T22}, this means that
$$
w_{5, 5}(H_{R}) \leq \max\left\{ \frac{5}{R}, \frac{3.25 + 2}{R}, \frac{2\cdot 3.25}{R} \right\} = \frac{6.5}{R}.
$$
Finally, we have
$$
w_{5, 5}(H) \leq w_{5, 5}(X_{1}) + w_{5, 5}(H_{B}) + w_{5, 5}(H_{R}) \leq \frac{3.25 + 6.5 + 6.5}{R} = \frac{16.25}{R},
$$
confirming (vi).
\end{proof}


\begin{lemma}\labelz{Lem:5.1} 
Let $H$ be an $(R_3,B_3)$-graph, whose parts are either free, red or blue. Then $w_{4, 5}(H) \leq \frac{2}{9}.$
\end{lemma}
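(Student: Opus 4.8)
The plan is to mirror the argument of Lemma~\ref{Lem:6.1}, replacing the weight function $w_{5,5}$ by $w_{4,5}$ together with the per-part bounds appropriate to the $(R_4,B_5)$ setting. First I would record the two per-part estimates that play the roles of Inequalities~\eqref{Ineq:T22} and~\eqref{Ineq:T21}. Here the two colors on the reduced graph are a $K_4$-color (red) and a $K_5$-color (blue), so a free part costs one factor from each of $r$ and $s$ and therefore has weight at most $\frac{1}{24}$ by Inequality~\eqref{Ineq:T16}, while a red or blue part uses its color only up to a $K_3$, and so has weight at most $\frac{1}{9}$ by Inequality~\eqref{Ineq:T15} (this last bound is the same for red and blue parts, which is what makes the two colors interchangeable below).

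Next I would use that $H$ is an $(R_3,B_3)$-graph to cap the number of parts: a monochromatic triangle in the reduced graph would give a monochromatic $K_3$ in $H$, so the reduced graph has at most $R(3,3)-1 = 5$ parts. If every part is free, summing the per-part bound gives $w_{4,5}(H) \leq \frac{5}{24} < \frac{2}{9}$ and we are done, so I may assume some part is non-free.

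The core of the proof is the structural analysis when a part $A$ is non-free, say red (the blue case is identical since the per-part weight is the same). The key observation is that a red part is blue-complete to every other part: a red edge inside $A$ together with a common red neighbor in another part would form a red $K_3$, which is forbidden. Consequently any two further parts must be joined in red (else they form a blue triangle with $A$), which in turn forces at most two further parts (three red-joined parts would be a red triangle) and forbids any further part from being a red part unless it is the unique one (a second red part would be blue-joined to the others, contradicting the red joining). The same reasoning rules out simultaneously having a red and a blue part, since the edge between them would need to be both colors. I would then finish with a short finite check of the surviving configurations: $A$ together with two free parts gives $\frac{1}{9} + \frac{2}{24} = \frac{7}{36} < \frac{2}{9}$, whereas exactly two red parts joined by blue give $\frac{1}{9} + \frac{1}{9} = \frac{2}{9}$, and no free part can be appended to such a pair without producing a blue triangle. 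This yields the bound $w_{4,5}(H) \leq \frac{2}{9}$, attained exactly by two monochromatic parts joined by the opposite color.

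The main obstacle I anticipate is the bookkeeping of these adjacency constraints on the reduced graph, namely verifying that a red part is blue-complete to the rest, that this caps the number of parts at two besides $A$, and that one cannot have both a red and a blue part. Once those constraints are pinned down, the weight comparison is an immediate finite computation, so the real work is entirely in establishing the forced $C_5$-type structure of the reduced graph rather than in any arithmetic.
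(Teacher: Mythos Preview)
Your proposal is correct and follows essentially the same approach as the paper's proof: both use the per-part bounds $\frac{1}{24}$ (Inequality~\eqref{Ineq:T16}) for free parts and $\frac{1}{9}$ (Inequality~\eqref{Ineq:T15}) for red or blue parts, cap the number of parts at $R(3,3)-1=5$, dispose of the all-free case by $\frac{5}{24}<\frac{2}{9}$, and then observe that a red part $X_1$ forces $H_R=\emptyset$ and $H_B$ blue-free, leaving either at most two free parts or a single additional red part in $H_B$. Your phrasing in terms of adjacency constraints on the reduced graph is slightly more explicit than the paper's, but the argument is the same.
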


\begin{proof}
To avoid a red or blue triangle, $H$ must have at most $R(3, 3) - 1 = 5$ parts. If all these parts are free, then by Inequality~\eqref{Ineq:T16}, we have $w_{4, 5}(H) \leq \frac{5}{24}$. If $H$ has a red or blue part $X_{1}$, say red, then $H_{R}$ is empty and $H_{B}$ contains no blue edges. If all parts in $H_{B}$ are free, then $w_{4, 5}(H_{B}) \leq \frac{1}{12}$ by Inequality~\eqref{Ineq:T16}. On the other hand, if $H_{B}$ contains a red part, then $w_{4, 5}(H_{B}) \leq \frac{1}{9}$ by Inequality~\eqref{Ineq:T15}. In either case, we have $w_{4, 5}(H) \leq \frac{2}{9}$.
\end{proof}

\begin{lemma}\labelz{Lem:5.2} 
Let $H$ be an $(R_3, B_4)$-graph, whose parts are either free, red or blue. Then
\bd
\item{\rm (i)} $w_{4, 5}(H) \leq \frac{25}{72}$ if $H$ contains exactly two blue parts, no red part, and the reduced graph of $H$ is the unique $2$-coloring of $K_{5}$ containing no monochromatic triangle (see Figure~\ref{Fig:Lem5.2}), or 
\item{\rm (ii)} $w_{4, 5}(H) \leq \frac{1}{3}$ otherwise.
\ed
\end{lemma}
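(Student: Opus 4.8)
The plan is to follow the decomposition strategy of Lemmas~\ref{Lem:6.1}--\ref{Lem:5.1}, but tracking the weight $w_{4,5}$, in which red plays the role of a $K_4$-color and blue the role of a $K_5$-color. First I would record the per-part weights in this setting. A \emph{free} part uses neither red nor blue, so it loses one $K_4$-color and one $K_5$-color and contributes at most $\frac{1}{24}$ by Inequality~\eqref{Ineq:T16}. A \emph{red} part has no blue edges and no red $K_3$, so blue is deleted and red is demoted to a $K_3$-color, giving a contribution of at most $\frac19$ by Inequality~\eqref{Ineq:T15}; the symmetric bookkeeping gives the same bound $\frac19$ for a \emph{blue} part. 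Since $H$ is an $(R_3,B_4)$-graph, its reduced graph has at most $R(3,4)-1=8$ parts.

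The first reduction handles a red part. If $H$ contains a red part $X_1$, then no other part can be joined to $X_1$ in red, since a red edge inside $X_1$ together with any red-neighbour would create a red $K_3$ (here, unlike in the $(R_5,B_3)$ lemmas, red $K_3$ is forbidden throughout $H$). Hence every other part lies in $H_B$, and a blue $K_3$ inside $H_B$ together with a vertex of $X_1$ would give a blue $K_4$, so $H_B$ is an $(R_3,B_3)$-graph. Lemma~\ref{Lem:5.1} then gives $w_{4,5}(H_B)\le\frac29$, and adding $w_{4,5}(X_1)\le\frac19$ yields $w_{4,5}(H)\le\frac13$, placing this situation in case~(ii).

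Next I would assume $H$ has no red part, so every part is free or blue. The key structural fact is that two blue parts cannot be joined in blue, because their internal blue edges together with an all-blue connection would form a blue $K_4$; thus the blue parts form a red clique in the reduced graph, and the absence of a red $K_3$ forces at most two blue parts. I then fix a blue part $X_1$ and split the remaining parts into $H_B$ (blue-joined to $X_1$) and $H_R$ (red-joined). A short argument shows $H_B$ consists of at most two free parts (no part of $H_B$ can carry a blue edge, and no two parts of $H_B$ can be blue-joined, else a blue $K_4$ appears with $X_1$), while $H_R$ contains no red edge at all (a red edge in $H_R$ with a vertex of $X_1$ is a red $K_3$), so $H_R$ is all-blue and, by the no-blue-$K_4$ condition, reduces to a single blue part plus at most one free part. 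With at most one blue part this gives $w_{4,5}(H)\le\frac19+\frac{5}{24}=\frac{23}{72}<\frac13$, and with exactly two blue parts it gives $w_{4,5}(H)\le\frac19+\bigl(\frac19+\frac1{24}\bigr)+\frac{2}{24}=\frac{25}{72}$, so that only the five-part two-blue situation can exceed $\frac13$.

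The last and most delicate step is to show that the two-blue bound $\frac{25}{72}$ is attained only by the configuration in~(i). Equality requires three free parts split as two in $H_B$ and one in $H_R$, hence exactly five parts, and this pins down all but four reduced edges. I would then verify, using only ``no red $K_3$'' and ``no blue $K_4$ through the internal blue edge of the second blue part,'' that these four edges are forced and that the resulting red graph on the five parts is $2$-regular, i.e.\ a $C_5$; consequently the reduced colouring is the unique triangle-free $2$-colouring of $K_5$. Thus every five-part two-blue configuration is exactly this one and gives at most $\frac{25}{72}$ (case~(i)), while all other configurations give at most $\frac13$ (case~(ii)). I expect this forcing argument to be the main obstacle, since it is the point where the exceptional coloring genuinely arises from the interaction of the blue parts' internal edges with the reduced-graph constraints, rather than being an artifact of the weight estimates.
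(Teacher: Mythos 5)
Your proposal is correct and follows essentially the same route as the paper's proof: dispose of the all-free and red-part cases (the latter via Lemma~\ref{Lem:5.1}), then decompose around a blue part $X_1$ into $H_R$ and $H_B$, obtaining $\frac{1}{9}+\frac{11}{72}+\frac{2}{24}=\frac{25}{72}$ and observing that exceeding $\frac{1}{3}$ forces $H_R$ to be one blue plus one free part. The one point where you go beyond the paper is the final forcing step: the paper simply asserts that the extremal configuration must be the coloring of Figure~\ref{Fig:Lem5.2}, whereas your matching argument (each $Z_j$ needs a red edge to $\{Y_1,Y_2\}$ to avoid a red triangle with $Z_1Z_2$, each $Y_i$ sends at most one, so the cross edges form a red perfect matching and the red reduced graph is a $C_5$) actually verifies it; this is a worthwhile addition rather than a deviation.
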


\begin{figure}[H]
\begin{center}
\includegraphics{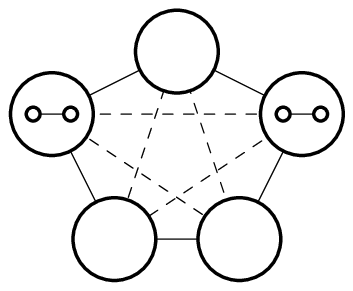}
\caption{The structure of $H$ with solid edges being blue and dashed edges being red \label{Fig:Lem5.2}}
\end{center}
\end{figure}

\begin{proof}
This proof is very similar to the proof of Lemma~\ref{Lem:6.2}. Since $H$ is an $(R_3, B_4)$-graph, it must have $t \leq 8 = R(3, 4) - 1$ parts. If all parts are free, then by Inequality~\eqref{Ineq:T16}, we have $w_{4, 5}(H) \leq \frac{t}{24} \leq \frac{8}{24} = \frac{1}{3}$.

Suppose first that $H$ contains no red parts but does contain at least one blue part. Let $X_1$ be a blue part, so $w_{4, 5}(X_1) = \frac{1}{9}$ by Inequality~\eqref{Ineq:T15}. Then $H_R$ is an $(R_{2},B_{4})$-graph and $H_B$ is an $(R_{3},B_{2})$-graph. Since $H_{B}$ contains no blue edge and there are no red parts, $H_{B}$ must contain at most $2$ parts and these must be free so $w_{4, 5}(H_{B}) \leq \frac{2}{24}$ by Inequality~\eqref{Ineq:T16}.  Since $H_{R}$ contains no red edge, there can be either at most three parts in $H_{R}$ that are all free (with all blue edges in between them), or one blue part and one free part. By Lemmas~\eqref{Ineq:T15} and~\eqref{Ineq:T16}, this means that
$$
w_{4, 5}(H_{R}) \leq \max\left\{ 3\cdot \frac{1}{24}, \frac{1}{9} + \frac{1}{24}\right\} = \frac{11}{72}.
$$
Putting these together, we get
$$
w_{4, 5}(H) \leq \frac{1}{72}\left(8 + 6 + 11 \right) = \frac{25}{72}.
$$
In fact, this bound is only achievable if $H_{R}$ contains one blue part and one free part since otherwise $w_{4, 5}(H_{R}) \leq \frac{1}{8} = \frac{9}{72}$ meaning that $w_{4, 5}(H) \leq \frac{23}{72} < \frac{1}{3}$. Therefore, in order for $w_{4, 5}(H) > \frac{1}{3}$, $H$ must have the structure pictured in Figure~\ref{Fig:Lem5.2}.

Finally suppose $H$ contains a red part $X_1$. Then $H_R$ is empty and $H_B$ is an $(R_{3},B_{3})$-graph. By Inequality~\eqref{Ineq:T15} and Lemma~\ref{Lem:5.1}, we obtain
$$
w_{4, 5}(H) \leq w_{4, 5}(X_1) + w_{4, 5}(H_B) \leq \frac{1}{9} + \frac{2}{9} = \frac{1}{3}.
$$
\end{proof}

\begin{lemma}\labelz{Lem:5.3} 
Let $H$ be an $(R_5, B_3)$-graph, whose parts are either free, red or blue. Then 
\bd
\item{\rm (i)} $w_{4, 5}(H) \leq \frac{5}{9}$ if $H$ consists of $5$ red parts and the reduced graph of $H$ is the unique $2$-coloring of $K_{5}$ with no monochromatic triangle, or
\item{\rm (ii)} $w_{4, 5}(H) \leq \frac{39}{72}$ otherwise.
\ed
\end{lemma}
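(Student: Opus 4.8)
The plan is to follow the template of Lemma~\ref{Lem:6.3}, but with the $w_{4,5}$-weights in place of the $w_{5,5}$-weights: by Inequalities~\eqref{Ineq:T16} and~\eqref{Ineq:T15}, a free part has weight at most $\frac{1}{24}$, and a red part or a blue part has weight at most $\frac{1}{9}$. Since $H$ is an $(R_5,B_3)$-graph, its reduced graph has no red $K_5$ and no blue $K_3$, so by Theorem~\ref{Thm:R35} there are at most $R(5,3)-1 = 13$ parts. If all parts are free, this already gives $w_{4,5}(H) \leq \frac{13}{24} = \frac{39}{72}$, so I may assume some part is red or blue.

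First I would clear away the case that $H$ has a blue part $X_1$. As in Lemma~\ref{Lem:6.3}, the blue edge inside $X_1$ together with the absence of a blue $K_3$ forces $H_B = \emptyset$, while avoiding a red $K_5$ makes $H_R$ an $(R_4,B_3)$-graph. A short separate computation (one more round of peeling, using Lemma~\ref{Lem:5.1} when a blue part is present) shows that every such $(R_4,B_3)$-graph has weight at most $\frac{25}{72}$, whence $w_{4,5}(H) \leq \frac{1}{9} + \frac{25}{72} = \frac{33}{72} < \frac{39}{72}$. So from now on every non-free part is red.

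Next I would bound the number of red parts. Three red parts that are pairwise joined in red already span a red $K_5$ (two internal red edges plus one further vertex), so among the red parts the reduced graph has neither a red $K_3$ nor a blue $K_3$; hence there are at most $R(3,3)-1 = 5$ red parts, with exactly five occurring only in the unique triangle-free $2$-coloring of $K_5$. If there are five, I would check that no further part can be attached: a free part $F$ could be red-joined only to an independent set of the red pentagon and blue-joined only to an independent set of the blue pentagon, i.e. to at most $2+2 < 5$ of the parts, a contradiction. Thus five red parts force $H$ to be exactly that pentagon, giving $w_{4,5}(H) \leq \frac{5}{9}$ and case (i).

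It remains to treat the case of no blue part and between one and four red parts. Peeling a red part $X_1$ makes $H_R$ an $(R_3,B_3)$-graph and $H_B$ an $(R_5,B_2)$-graph whose parts (all free or red) are pairwise red-joined, so a red-$K_5$ count gives $2\rho_B + (\text{number of free parts of } H_B) \leq 4$. Recording only the numbers $\rho_R,\rho_B$ of red parts in $H_R,H_B$, refined forms of Lemma~\ref{Lem:5.1} and of this count yield $w_{4,5}(H_R) \leq \frac{16}{72},\frac{14}{72},\frac{15}{72}$ for $\rho_R = 2,1,0$ and $w_{4,5}(H_B) \leq \frac{16}{72},\frac{14}{72},\frac{12}{72}$ for $\rho_B = 2,1,0$; since $1+\rho_R+\rho_B \leq 4$, maximizing $\frac{1}{9}+w_{4,5}(H_R)+w_{4,5}(H_B)$ over the admissible pairs gives the value $\frac{8}{72}+\frac{15}{72}+\frac{16}{72} = \frac{39}{72}$ (attained at $(\rho_R,\rho_B)=(0,2)$), establishing case (ii). The crux of the whole argument is exactly this optimization: the naive peeling bound $\frac{1}{9}+\frac{2}{9}+\frac{16}{72} = \frac{40}{72} = \frac{5}{9}$ coincides with the pentagon value, so Lemma~\ref{Lem:5.1} used as a black box cannot separate (i) from (ii). One must instead track how many red parts land in $H_R$ and in $H_B$ and use the sharper, count-dependent sub-bounds to see that $\frac{5}{9}$ is reached only when those two pieces contribute four more red parts — precisely the five-part pentagon — and that the weight drops to $\frac{39}{72}$ the moment at most four red parts are present. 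Establishing these refined sub-bounds, together with the rigidity of the pentagon, is where I expect the real work to lie.
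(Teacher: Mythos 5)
Your proposal is correct and follows essentially the same route as the paper: peel off a blue part via Lemma~\ref{Lem:5.2} to reduce to the all-red case, then peel a red part $X_1$ and optimize $\frac{1}{9}+w_{4,5}(H_R)+w_{4,5}(H_B)$ using the count-dependent refinements of Lemma~\ref{Lem:5.1} for the $(R_3,B_3)$-graph $H_R$ and the red-$K_5$ count $2\rho_B+f\leq 4$ for $H_B$, with the extremal non-pentagon value $\frac{8+15+16}{72}=\frac{39}{72}$. Your explicit verification that five red parts force exactly the pentagon (so no free parts can attach, and the case $\rho_R=\rho_B=2$ cannot occur outside case (i)) is a point the paper leaves implicit, but the argument is otherwise the same.
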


\begin{proof}
Since $H$ is an $(R_5, B_3)$-graph, there are at most $R(5, 3) - 1 = 13$ parts in $H$. If all these parts are free, then $w_{4, 5}(H) \leq \frac{13}{24} = \frac{39}{72}$ by Inequality~\eqref{Ineq:T16}.

First suppose $H$ contains a blue part $X_{1}$. Then $H_{B}$ is empty and $H_{R}$ is an $(R_{4}, B_{3})$-graph so by Inequality~\eqref{Ineq:T15} and Lemma~\ref{Lem:5.2}, we get
$$
w_{4, 5}(H) \leq w_{4, 5}(X_{1}) + w_{4, 5}(H_{R}) \leq \frac{1}{9} + \frac{25}{72} = \frac{33}{72}.
$$
We may therefore assume $H$ contains no blue part.

If $H$ contains $5$ red parts as described in the statement then clearly $w_{4, 5}(H) \leq \frac{5}{9}$ so suppose this is not the case. That is, suppose $H$ contains a red part $X_{1}$ and at least one free part, so $H_{B}$ (defined in terms of $X_{1}$) contains no blue edges and $H_{R}$ is an $(R_{3}, B_{3})$-graph. Then $H_{B}$ either contains at most $4$ parts that are all free (with all red edges in between them), or one red part and two free parts, or two red parts. By Inequalities~\eqref{Ineq:T15} and~\eqref{Ineq:T16}, this means
$$
w_{4, 5}(H_{B}) \leq \max \left\{ \frac{4}{24}, \frac{1}{9} + \frac{2}{24}, \frac{2}{9}\right\} = \frac{2}{9}.
$$
If $H_{B}$ is not two red parts, then $w_{4, 5}(H_{B}) \leq \frac{14}{72}$. This together with Lemma~\ref{Lem:5.1} gives
$$
w_{4, 5}(H) \leq w_{4, 5}(X_{1}) + w_{4, 5}(H_{B}) + w_{4, 5}(H_{R}) \leq \frac{1}{9} + \frac{14}{72} + \frac{2}{9} = \frac{38}{72}.
$$
Thus, suppose $H_{B}$ has two red parts (so $w_{4, 5}(H_{B}) \leq \frac{2}{9}$) and $H_{R}$ does not have two red parts. By the proof of Lemma~\ref{Lem:5.1}, we have $w_{4, 5}(H_{R}) \leq \frac{5}{24}$. Putting these together, we get
$$
w_{4, 5}(H) \leq w_{4, 5}(X_{1}) + w_{4, 5}(H_{B}) + w_{4, 5}(H_{R}) \leq \frac{1}{9} + \frac{2}{9} + \frac{5}{24} = \frac{39}{72}.
$$
\end{proof}

\section{Three Colors}\labelz{Sec:3-Col}

In this section, we discuss a lower bound example that leads to a counterexample to either Conjecture~\ref{Conj:Fox} or Conjecture~\ref{Conj:McKay}.

\begin{lemma}\labelz{Lem:C-Example}
There exists a $3$-colored copy of $K_{169}$ which contains no rainbow triangle and no monochromatic copy of $K_{5}$.
\end{lemma}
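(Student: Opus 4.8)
The plan is to build the coloring by substitution (a blow-up of a reduced graph), exploiting the Ramsey number $R(K_3,K_5)=14$ from Theorem~\ref{Thm:R35}. The number $169=13^2$ suggests a two-level construction with $13$ parts, each of size $13$, where $13 = R(K_3,K_5)-1$. I would use three colors --- call them red, blue, and green --- and arrange that blue is the only color appearing both between and within the parts, while red appears only between parts and green only within parts.

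First I would fix a $2$-coloring $F$ of $K_{13}$ with colors blue and green having no blue $K_3$ and no green $K_5$; such $F$ exists precisely because $R(K_3,K_5)=14>13$ (Theorem~\ref{Thm:R35}). Then I would take $13$ disjoint parts $V_1,\dots,V_{13}$, each of size $13$, coloring the edges inside each $V_i$ according to a copy of $F$ (so inside each part blue is triangle-free and green has no $K_5$). For the edges between parts, I would color the $K_{13}$ on the parts as a \emph{reduced graph}, again using a copy of $F$ but with green recolored to red; thus in the reduced graph blue has no $K_3$ and red has no $K_5$, and I color every edge between $V_i$ and $V_j$ with the color assigned to the pair $\{i,j\}$ in the reduced graph. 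This yields a $3$-coloring of $K_{169}$.

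The two things to verify are the absence of a rainbow triangle and of a monochromatic $K_5$. For rainbow triangles, I would observe that any triangle falls into one of three cases: all three vertices in distinct parts (its edges use only the reduced-graph colors, red and blue), exactly two vertices in one part (the two edges leaving that part have the same color, namely the reduced color of the relevant pair), or all three in one part (its edges use only the inner colors, blue and green); in every case at most two colors appear, so no triangle is rainbow. For monochromatic cliques I would treat the colors separately: a red clique meets each part in at most one vertex and so projects to a red clique of the reduced graph, which has no red $K_5$; a green clique lies inside a single part, which has no green $K_5$.

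The main obstacle --- and the crux of the argument --- is bounding the blue clique number, since blue is the shared color and a blue clique can spread across several parts. Here I would argue that if $C$ is a blue clique then the set of parts it meets is itself a blue clique of the reduced graph, hence has size at most $2$ (blue is triangle-free in the reduced graph), while $C$ meets each such part in a blue clique of that part, again of size at most $2$ (blue is triangle-free inside each part). Therefore $|C| \le 2\cdot 2 = 4 < 5$, so there is no blue $K_5$. Combining the three color analyses gives a $3$-colored $K_{169}$ with no rainbow triangle and no monochromatic $K_5$, as required.
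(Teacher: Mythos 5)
Your construction is correct and is essentially the paper's own: both take a $13$-vertex sharpness graph for $R(K_3,K_5)=14$, blow it up into $13$ parts of size $13$, and insert a recolored copy of the same graph into each part, with the triangle-free color shared between the two levels so that its clique number is bounded by $2\cdot 2=4$. The only difference is cosmetic (the paper makes red the shared triangle-free color and blue/green the $K_5$-free colors, while you use blue as the shared color), and your explicit case analysis for rainbow triangles and for the shared-color clique fills in details the paper leaves to the reader.
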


\begin{proof}
Let $G_{rb}$ be a sharpness example on $13$ vertices for the Ramsey number $R(K_{3}, K_{5}) = 14$ say using colors red and blue respectively. Such an example as $G_{rb}$ is $4$-regular in red and $8$-regular in blue. Similarly, let $G_{rg}$ be a copy of the same graph with all blue edges replaced by green edges. We construct the desired graph $G$ by making $13$ copies of each vertex in $G_{rb}$ and for each set of copies (corresponding to a vertex), insert a copy of $G_{rg}$.
If an edge $uv$ in $G_{rb}$ is red (respectively blue), then all edges in $G$ between the two inserted copies of $G_{rg}$ corresponding to $u$ and $v$ are colored red (respectively blue).
Then $G$ contains no rainbow triangle by construction but also contains no monochromatic $K_{5}$. Since $|G| = 169$, this provides the desired example.
\end{proof}

Note that if $R(K_{5}, K_{5}) = 43$ so $R = 42$, then Conjecture~\ref{Conj:Fox} claims that $gr_{3}(K_{3} : K_{5}) = 169$ but this example refutes this claim. On the other hand, if $R(K_{5} : K_{5}) > 43$, then the conjecture holds for $K_{5}$, as proven in Section~\ref{Sec:MainPf} below.

\section{Proof of Theorem~\ref{Thm:grK5} (and Theorem~\ref{Thm:Main})}\labelz{Sec:MainPf}

Note that the lower bound for Theorem~\ref{Thm:Main} follows from Lemma~\ref{Lem:LowBnd} and was also presented in \cite{FGP15} but the lower bound for Theorem~\ref{Thm:grK5} must be more detailed.

\begin{proof}
For the lower bounds, use the following constructions. For all constructions, we start with an $i$-colored base graph $G_{i}$ (constructed below) and inductively suppose we have constructed an $i$-colored graph $G_{i}$ containing no rainbow triangle an no appropriately colored monochromatic cliques. For each two unused colors requiring a $K_{5}$, we construct $G_{i + 2}$ by making $R$ copies of $G_{i}$, adding all edges in between the copies to form a blow-up of a sharpness example for $r(K_{5}, K_{5})$ on $R$ vertices. For each two unused colors requiring a $K_{4}$, we construct $G_{i + 2}$ by making $17$ copies of $G_{i}$, adding all edges in between the copies to form a blow-up of a sharpness example for $r(K_{4}, K_{4})$ on $17$ vertices. For each two unused colors requiring a $K_{3}$, we construct $G_{i + 2}$ by making $5$ copies of $G_{i}$, adding all edges in between the copies to form a blow-up of the sharpness example for $r(K_{3}, K_{3})$ on $5$ vertices.

The base graphs for this construction are constructed by case as follows.
For Case~$(c_{1})$, the base graph $G_{0}$ is a single vertex.
For Case~$(c_{2})$, the base graph $G_{1}$ is a monochromatic copy of $K_{2}$.
For Case~$(c_{3})$, the base graph $G_{1}$ is a monochromatic copy of $K_{3}$.
For Case~$(c_{4})$, the base graph $G_{1}$ is a monochromatic $K_{4}$.
For Case~$(c_{5})$, the base graph $G_{2}$ is a sharpness example on $8$ vertices for $r(K_{3}, K_{4}) = 9$.
For Case~$(c_{6})$, the base graph $G_{2}$ is a sharpness example on $13$ vertices for $r(K_{3}, K_{5}) = 14$.
For Case~$(c_{7})$, the base graph $G_{3}$ is two copies of a sharpness example for $r(K_{3}, K_{4}) = 9$ with all edges in between the copies having a third color.
For Case~$(c_{8})$, the base graph $G_{2}$ is a sharpness example on $24$ vertices for $r(K_{4}, K_{5}) = 25$.
For Case~$(c_{9})$, the base graph $G_{3}$ is two copies of a sharpness example on $13$ vertices for $r(K_{3}, K_{5}) = 14$.
For Case~$(c_{10})$, the base graph $G_{3}$ is two copies of a sharpness example on $24$ vertices for $r(K_{4}, K_{5}) = 25$ with all edges in between the copies having a third color.
For Case~$(c_{11})$, the base graph $G_{3}$ is three copies of a sharpness example on $24$ vertices for $r(K_{4}, K_{5}) = 25$ with all edges in between the copies having a third color.
These base graphs and the corresponding completed constructions contain no rainbow triangle and no appropriately colored monochromatic cliques.

For the upper bound, let $G$ be a Gallai coloring of $K_{n}$ where $n$ is given in the statement. We prove this result by induction on $3r + 2s + t$, meaning that it suffices to either reduce the order of a desired monochromatic subgraph or eliminate a color. Consider a Gallai partition of $G$ and let $q$ be the number of parts in this partition. Choose such a partition so that $q$ is minimized.

\begin{claim}\labelz{Claim:Smallq}
We may assume that $q \geq 4$.
\end{claim}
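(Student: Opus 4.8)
The plan is to invoke the Gallai partition theorem (Theorem~\ref{Thm:G-Part}) to obtain $q \ge 2$, then to rule out $q = 3$ and to reduce every remaining small configuration to the single case $q = 2$ purely from the minimality of $q$, and finally to dispose of $q = 2$ by the induction already in force. Throughout I write the \emph{reduced graph} as the $2$-colored complete graph on the $q$ parts, with the two available colors called red and blue.

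First I would exploit minimality through a merging argument. Suppose the red subgraph of the reduced graph is disconnected. Then all edges between distinct red components are blue, so merging each red component into a single part yields a Gallai partition in which the new parts are pairwise joined only by blue; this partition has strictly fewer than $q$ parts whenever red carries at least one edge, contradicting the minimality of $q$. The symmetric statement holds with the colors exchanged, so in a minimal partition every color that actually appears on the reduced graph spans a connected subgraph on all $q$ parts. Two consequences follow. If the reduced graph is monochromatic, then $q = 2$, since otherwise any bipartition of the parts produces a monochromatic join into $2 < q$ parts, again contradicting minimality. If instead both colors appear, then each spans a connected subgraph of $K_q$ and therefore contributes at least $q - 1$ edges, whence $\binom{q}{2} \ge 2(q-1)$, i.e. $(q-1)(q-4) \ge 0$, which for $q \ge 2$ forces $q \ge 4$. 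In particular $q = 3$ cannot occur, and the only configuration not already giving $q \ge 4$ is a monochromatic reduced graph with $q = 2$.

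It remains to handle $q = 2$, where $G$ is the join of two parts $V_1, V_2$ by a single color, say red, and this is where the real work lies: minimality no longer helps, since refining a part only increases the number of parts. Instead I would argue by the induction on $3r + 2s + t$ already set up, using that a red clique may split freely across the join, so that a red $K_a$ in $V_1$ together with a red $K_b$ in $V_2$ with $a + b \ge 5$ produces a red $K_5$ (and similarly for the $K_4$ and $K_3$ targets). Applying the inductive hypothesis inside each $V_i$ — each a Gallai coloring on fewer vertices — I would either locate red cliques in $V_1$ and $V_2$ whose orders sum to at least the required size, or conclude that one part is so small that the other already carries the full target monochromatic clique. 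The delicate point, and the main obstacle, is the bookkeeping of this reduction: one must check that passing to the parts strictly decreases the induction parameter and that splitting the required red clique across the join is correctly charged against the order bounds, so that the $q = 2$ configuration genuinely forces either the desired monochromatic clique or an admissible reduction. Once $q = 2$ is dispatched, every surviving minimal partition has $q \ge 4$, which proves the claim.
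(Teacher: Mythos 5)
Your reduction to $q=2$ is sound and in fact a bit more structural than the paper's: the paper simply observes that a $2$-colored triangle has two edges of the same color and hence admits a coarser two-part Gallai partition, whereas you show via the merging argument that in a minimal partition every color present on the reduced graph spans a connected subgraph, so that two colors force $\binom{q}{2}\ge 2(q-1)$ and hence $q\ge 4$, while one color forces $q=2$. Either route disposes of $q=3$ correctly.

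The genuine gap is in the $q=2$ case, which is where essentially all of the content of this claim lives, and you explicitly defer the part that matters (``the delicate point, and the main obstacle, is the bookkeeping''). The paper's argument there is not a clique-locating argument but a volume argument: letting red be the join color, it splits into three cases according to whether red lies among the last $t$ colors (forbidden red $K_3$), the middle $s$ colors (forbidden red $K_4$), or the first $r$ colors (forbidden red $K_5$). In each case it determines exactly how the forbidden red clique can be distributed across $A$ and $B$ --- for instance, for $K_5$ either both parts contain red edges but no red triangle, or one part contains a red triangle but no red $K_4$ while the other contains no red edge at all --- and then bounds each part by the inductive quantity $g(r',s',t')$ corresponding to its reduced requirement. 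The contradiction $|A|+|B|\le g(r,s,t)<|G|$ then rests entirely on the numerical inequalities T1, T3, T4, T9, T11 and T12, e.g. $2\,g(r,s,t-1)\le g(r,s,t)$ and $g(r-1,s+1,t)+g(r-1,s,t)\le g(r,s,t)$, which must be verified across all eleven parity cases $(c_1),\dots,(c_{11})$ (this is what Tables~1 and~2 do). Your proposal names neither the correct case structure nor these inequalities, and the ``locate red cliques whose orders sum to at least the required size'' framing, while morally the contrapositive of the paper's bound, is not carried out; without the Type inequalities there is no proof that the $q=2$ configuration is impossible. As written, the argument stops exactly where the work begins.
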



\begin{proof}
For a contradiction, suppose $q \leq 3$. If $q = 3$, then the reduced graph is a $2$-colored triangle, which contains two edges of the same color. This means that there is a bipartition of the vertices so that all edges in between have one color, contradicting the minimality of $q$. Thus, assume $q = 2$. Let red be the color between the two sets, $A$ and $B$.

First suppose that red is among the last $t$ colors, so we hope to find a red triangle. To avoid a red triangle, there must be no red edges within $A$ or $B$. By induction on $3r + 2s + t$ and using Inequality~\eqref{Ineq:T1}, we get
$$
|G|  =  |A| + |B| \leq  2 [ g(r, s, t - 1) ] = g(r, s, t) < |G|,
$$
a contradiction.

Next suppose that red is among the middle $s$ colors, so we hope to find a red $K_{4}$. To avoid a red $K_{4}$, only one of $A$ or $B$ can have any red edges. Suppose $A$ is allowed to have red edges so $B$ is not. Then observe that $A$ cannot contain a red triangle as this would also create a red $K_{4}$. Thus, by induction on $3r + 2s + t$ and using Inequalities~\eqref{Ineq:T3} and~\eqref{Ineq:T4} respectively, we get
$$
|G| = |A| + |B| \leq g(r, s - 1, t + 1) + g(r, s - 1, t) < g(r, s, t) < |G|,
$$
a contradiction.

Finally suppose red is among the first $r$ colors, so we hope to find a red $K_{5}$. Supposing that the red clique number within $A$ is at least as large as the red clique number within $B$, we get the following requirements:
\bi
\item $A$ contains no red $K_{4}$, and
\item if $A$ contains a red $K_{3}$, then $B$ contains no red edges.
\ei
These leave only two options:
\be
\item $A$ and $B$ both may contain red edges but no red $K_{3}$, or
\item $A$ contains a red $K_{3}$ (but no red $K_{4}$) and $B$ contains no red edges.
\ee

For the first option, we remove $1$ from $r$ but add $1$ to $t$ within both $A$ and $B$. By induction on $3r + 2s + t$ and using Inequality~\eqref{Ineq:T11}, we get
$$
|G| = |A| + |B| \leq 2 g(r - 1, s, t + 1) < g(r, s, t) < |G|,
$$
a contradiction.

For the second option, we remove $1$ from $r$ in both $A$ and $B$ but add $1$ to $s$ in $A$. By induction on $3r + 2s + t$ and using Inequalities~\eqref{Ineq:T9} and~\eqref{Ineq:T12}, we get
$$
|G| = |A| + |B| \leq g(r - 1, s + 1, t) + g(r - 1, s, t) < g(r, s, t) < |G|,
$$
a contradiction. This completes the proof of Claim~\ref{Claim:Smallq}.
\end{proof}

Let $D$ be the reduced graph of the Gallai partition, with vertices $w_{i}$ corresponding to parts $G_{i}$ of the partition. Let $V_{r}$ denote the set of vertices in $D$ whose corresponding sets in the partition contain at least one red edge and let $V_{b}$ denote the set of vertices in $D$ whose corresponding sets in the partition contain at least one blue edge. 
Let $p_2=|V_r \cap V_b|$ be the number of parts containing at least one red and at least one blue edge, $p_1=|V_r\triangle V_b|$ be the number of parts containing at least one red edge or at least one blue edge but not both, and $p_0=|V(D)\setminus(V_r\cup V_b)|$ be the number of parts with no red or blue edges.

For each vertex $w_{i} \in D$, let $d_{r}(w_{i})$ and $d_{b}(w_{i})$ denote its red and blue degrees respectively within $D$. Then $d_{r}(w_{i}) + d_{b}(w_{i}) = q - 1$ for all $i$. By the choice of the Gallai partition with the smallest number of parts, the following fact is immediate.

\begin{fact}\labelz{Fact:Deg1}
For all $w_{i} \in V(D)$, we have $d_{r}(w_{i}), d_{b}(w_{i}) \geq 1$.
\end{fact}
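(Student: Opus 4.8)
The statement asserts that in the reduced graph $D$, which by Theorem~\ref{Thm:G-Part} is a complete graph $K_q$ whose edges carry only the two colors red and blue, every vertex is incident with at least one edge of each color. Equivalently, writing $R_D$ and $B_D$ for the spanning red and blue subgraphs of $D$, the claim is exactly that both $R_D$ and $B_D$ have minimum degree at least $1$. My plan is to establish the slightly stronger (and standard) structural fact that minimality of $q$ forces each of $R_D$ and $B_D$ to be a connected spanning subgraph of $D$; since $q \geq 4 \geq 2$ by Claim~\ref{Claim:Smallq}, connectivity immediately yields minimum degree at least $1$ in each color, which is the desired conclusion.

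To prove connectivity I would argue by contradiction, and by symmetry treat $R_D$. Suppose $R_D$ is disconnected, and let $C_1, \dots, C_m$ (with $m \geq 2$) be its connected components. Because $D$ is complete and no red edge joins two distinct components, every edge between a vertex of $C_i$ and a vertex of $C_j$ with $i \neq j$ is blue. Consequently, grouping the parts $G_\ell$ according to which component $C_i$ contains $w_\ell$ produces a coarser partition of $V(G)$ into $m$ super-parts between which only the single color blue appears. This is again a Gallai partition (at most two colors between parts---indeed just one---and one color between each pair), and if $m < q$ it directly contradicts the choice of the partition minimizing $q$.

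The one case requiring the hypothesis of Claim~\ref{Claim:Smallq} is $m = q$, i.e.\ when $R_D$ has no edges at all; then the grouping above is just the original partition and gives no reduction. But in that case $D$ is monochromatically blue, so \emph{any} bipartition of its $q$ vertices yields a Gallai partition into two parts with only blue appearing between them. Since $q \geq 4 > 2$, this is a strictly coarser Gallai partition, again contradicting minimality of $q$. Hence $R_D$ (and by symmetry $B_D$) is a connected spanning subgraph on $q \geq 2$ vertices, so $d_{r}(w_i) \geq 1$ and $d_{b}(w_i) \geq 1$ for every $w_i \in V(D)$.

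I expect essentially no serious obstacle here: the entire content is the component-merging argument, and the only subtlety---the degenerate monochromatic case $m = q$---is precisely what Claim~\ref{Claim:Smallq} ($q \geq 4$) is there to handle. The point to verify carefully is simply that the merged partition is a genuine Gallai partition, which holds because all inter-component edges share the single color blue, so the two requirements of Theorem~\ref{Thm:G-Part} are met trivially.
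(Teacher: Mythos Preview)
Your argument is correct and rests on the same principle the paper invokes, namely minimality of $q$; the paper simply declares the fact ``immediate'' without spelling out any details. You take a slightly longer route by proving the stronger statement that each monochromatic spanning subgraph of $D$ is connected, whereas the one-line version is: if $d_r(w_i)=0$ then every edge from $G_i$ to the rest of $G$ is blue, so $\{G_i,\,V(G)\setminus G_i\}$ is already a Gallai partition into $2<q$ parts, contradicting minimality. Your component-merging argument recovers this as the special case where one component is the singleton $\{w_i\}$, and the extra work (handling $m=q$) is not actually needed for the stated fact, though it does yield the useful side benefit that $R_D$ and $B_D$ are connected.
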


To avoid a monochromatic copy of $K_{5}$, the following facts follow immediately from the relevant definitions.

\begin{fact}\labelz{Fact:MonoCliques}
If $w_{i} \in D$ is in a red $K_{4} \subseteq D$, then $w_{i} \notin V_{r}$. If $w_{i} \in D$ is in a blue $K_{4} \subseteq D$, then $w_{i} \notin V_{b}$.
\end{fact}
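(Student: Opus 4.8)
The plan is to argue by contradiction, directly exhibiting a forbidden monochromatic $K_{5}$ in $G$ whenever the conclusion fails. The two assertions are symmetric under interchanging red and blue, so I would write out only the red case and then invoke symmetry for the blue case.

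First I would suppose, for contradiction, that $w_{i} \in V_{r}$ and that $w_{i}$ lies in a red copy of $K_{4}$ in $D$, say on the vertices $w_{i}, w_{a}, w_{b}, w_{c}$. By the definition of a Gallai partition (Theorem~\ref{Thm:G-Part}), each pair of parts is joined by edges of a single color, so here every edge between the parts $G_{i}, G_{a}, G_{b}, G_{c}$ is red. Since $w_{i} \in V_{r}$, the part $G_{i}$ contains at least one red edge, say $uv$ with $u, v \in G_{i}$.

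Next I would select one representative vertex from each of the remaining three parts, say $x \in G_{a}$, $y \in G_{b}$, $z \in G_{c}$, and examine the five vertices $\{u, v, x, y, z\}$. The edge $uv$ is red by the choice of $uv$; each edge from $u$ or $v$ to one of $x, y, z$ is red because it runs between $G_{i}$ and one of $G_{a}, G_{b}, G_{c}$; and each edge among $x, y, z$ is red because it runs between two of $G_{a}, G_{b}, G_{c}$. Hence all ten edges are red and $\{u, v, x, y, z\}$ induces a red $K_{5}$ in $G$, which is exactly the configuration forbidden in the current case of the argument. This contradiction forces $w_{i} \notin V_{r}$, and the blue statement follows verbatim after swapping the roles of red and blue.

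I do not expect any genuine obstacle here: the entire content is the observation that a single monochromatic edge inside one part of a monochromatic $K_{4}$ of the reduced graph already completes a monochromatic $K_{5}$. The only point requiring care is verifying that each of the ten edges of the claimed $K_{5}$ is indeed red, and this is immediate from the constant-color-between-parts property of a Gallai partition together with the choice of $uv$ as a red edge inside $G_{i}$.
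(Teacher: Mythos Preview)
Your argument is correct and is exactly the unpacking of what the paper means when it says these facts ``follow immediately from the relevant definitions'' in order ``to avoid a monochromatic copy of $K_{5}$.'' The paper does not write out a proof at all, and your contradiction argument via picking a red edge $uv$ in $G_{i}$ together with one vertex from each of the other three parts of the red $K_{4}$ in $D$ is precisely the intended verification.
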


\begin{fact}\labelz{Fact:LargeDegrees}
For all $i$,
$$
d_{r}(w_{i}) \leq 24, \text{ and } d_{b}(w_{i}) \leq 24.
$$
\end{fact}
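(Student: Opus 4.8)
The plan is to bound $d_r(w_i)$ directly by a Ramsey argument applied to the red neighborhood of $w_i$ in the reduced graph $D$, and then invoke the red/blue symmetry of $D$ to obtain the bound on $d_b(w_i)$.

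First I would suppose, for contradiction, that $d_r(w_i) \geq 25$ for some part $w_i$, and let $N$ be a set of exactly $25$ parts joined to $G_i$ by red edges. The edges of $D$ among the vertices of $N$ use only the two colors red and blue, so $N$ induces a $2$-colored complete graph on $25$ vertices. Since $R(K_4, K_5) = 25$ by Theorem~\ref{Thm:R45}, this induced subgraph contains either a red $K_4$ or a blue $K_5$.

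Next I would convert each of these two outcomes into a monochromatic $K_5$ in $G$ itself, using that every part of a Gallai partition is nonempty, so a monochromatic clique in the reduced graph blows up to a monochromatic clique of the same order in $G$ (by choosing one vertex from each relevant part). If $N$ contains a blue $K_5$ in $D$, this already yields a blue $K_5$ in $G$, a contradiction. If instead $N$ contains a red $K_4$ in $D$, then since every vertex of $N$ is red-adjacent to $w_i$, the vertex $w_i$ together with this red $K_4$ forms a red $K_5$ in $D$, which blows up to a red $K_5$ in $G$, again a contradiction. Hence $d_r(w_i) \leq 24$. The bound $d_b(w_i) \leq 24$ then follows by the identical argument with the roles of red and blue interchanged, using $R(K_5, K_4) = R(K_4, K_5) = 25$.

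There is no serious obstacle here; the only point requiring genuine care is the choice of the \emph{asymmetric} Ramsey number rather than $R(K_5,K_5)$. One wants a red $K_4$ (not a red $K_5$) inside the neighborhood, because the center $w_i$ supplies the fifth red-adjacent vertex for free, whereas a blue clique receives no such bonus and so must already be a $K_5$. This asymmetry is exactly what $R(K_4,K_5)=25$ encodes, and it yields the sharp threshold $24$ in both coordinates.
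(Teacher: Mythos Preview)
Your argument is correct and is exactly the standard Ramsey argument the paper has in mind; the paper simply states that this fact ``follows immediately'' to avoid a monochromatic $K_5$, and your use of $R(K_4,K_5)=25$ on the red (resp.\ blue) neighborhood of $w_i$ in $D$ is precisely the intended justification. No differences to report.
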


If a vertex $w_{i} \in D$ has at least $r(3, 5) = 14$ incident edges in red (in $D$), then the neighborhood contains either a red $K_{3}$ or a blue $K_{5}$. Certainly the latter is not an option so the former must occur, meaning that $w_{i}$ is contained in a red $K_{4}$ within $D$. By Fact~\ref{Fact:MonoCliques}, we get the following fact.

\begin{fact}\labelz{Fact:SmallDegrees}
If $d_{r}(w_{i}) \geq 14$, then $w_{i} \notin V_{r}$. If $d_{b}(w_{i}) \geq 14$, then $w_{i} \notin V_{b}$.
\end{fact}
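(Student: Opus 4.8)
The final statement to prove is Fact~\ref{Fact:SmallDegrees}, which asserts that if a vertex $w_i$ in the reduced graph $D$ has red degree at least $14$, then $w_i \notin V_r$, and symmetrically for blue. Although the excerpt sketches the argument in the text preceding the statement, I will reconstruct the proof cleanly.

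The plan is to argue by the two-color Ramsey number $R(K_3, K_5) = 14$ (Theorem~\ref{Thm:R35}) together with Fact~\ref{Fact:MonoCliques}. First I would fix a vertex $w_i \in V(D)$ with $d_r(w_i) \geq 14$ and consider its red neighborhood $N_r(w_i)$ in $D$, which has at least $14$ vertices. Since $G$ is a Gallai coloring, the reduced graph $D$ uses only the two colors red and blue on its edges (by Theorem~\ref{Thm:G-Part}, applied to the reduced graph, which inherits the no-rainbow-triangle property). Thus the edges within $N_r(w_i)$ form a red/blue $2$-coloring of a complete graph on at least $R(K_3, K_5) = 14$ vertices, so it contains either a red $K_3$ or a blue $K_5$.

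The next step is to rule out the blue $K_5$: such a subgraph of $D$ would, upon expanding each corresponding part to a single vertex, yield a monochromatic blue $K_5$ in $G$ (the parts are joined by blue edges and we may pick one vertex from each part), contradicting that $G$ avoids a monochromatic $K_5$. Hence the red $K_3$ must occur inside $N_r(w_i)$. Together with $w_i$ itself—which sends red edges to all three of these vertices—this red triangle extends to a red $K_4$ in $D$ containing $w_i$. By Fact~\ref{Fact:MonoCliques}, any vertex lying in a red $K_4 \subseteq D$ is not in $V_r$, so $w_i \notin V_r$, as desired. The blue statement follows by the identical argument with the roles of red and blue interchanged, using that a red $K_5$ in $D$ would likewise produce a monochromatic $K_5$ in $G$.

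I do not anticipate a genuine obstacle here, since this is a short structural consequence of the Ramsey number $R(3,5)=14$ and the already-established facts. The only subtlety worth stating carefully is the reduction step: one must note that a monochromatic clique in the \emph{reduced} graph $D$ lifts to a monochromatic clique of the same size in $G$ (choosing one vertex per part), which is what forbids the blue $K_5$ and thereby forces the red triangle and hence the red $K_4$ through $w_i$. Everything else is a direct appeal to Theorem~\ref{Thm:R35} and Fact~\ref{Fact:MonoCliques}.
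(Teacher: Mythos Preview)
Your argument is correct and matches the paper's own reasoning essentially verbatim: apply $R(K_3,K_5)=14$ to the red neighborhood of $w_i$ in $D$, rule out the blue $K_5$ (since any monochromatic $K_5$ in $D$ lifts to a forbidden monochromatic clique in $G$), obtain a red $K_4$ through $w_i$, and invoke Fact~\ref{Fact:MonoCliques}. The only minor remark is that ``contradicting that $G$ avoids a monochromatic $K_5$'' should more precisely read ``contradicting that $G$ avoids the appropriate monochromatic clique in blue,'' but since a blue $K_5$ contains blue copies of $K_3$ and $K_4$ as well, the contradiction holds in every case.
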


The remainder of the proof is broken into cases based on where red and blue fall in the list of colors relative to the first $r$ colors, the middle $s$ colors, and the last $t$ colors.

\setcounter{case}{0}

\begin{case}\labelz{Case:1}
Both red and blue occur within the last $t$ colors.
\end{case}

In this case, the graph $G$ contains no red or blue triangle. Since $r(K_{3}, K_{3}) = 6$, we find that $4 \leq q \leq 5$. By Fact~\ref{Fact:Deg1}, for every $i$, it follows that $G_{i}$ contains no red or blue edge. This means that every $G_{i}$ is colored with at most $k - 2$ colors with
$$
|G_{i}| \leq g(r, s, t - 2).
$$
By induction and Inequality~\eqref{Ineq:T2},
$$
|G| = \sum_{1 = 1}^{q} |G_{i}| \leq 5 g(r, s, t - 2) \leq g(r, s, t) < |G|,
$$
a contradiction, completing the proof of Case~\ref{Case:1}.

\begin{case}\labelz{Case:2}
Red is among the middle $s$ colors while blue is among the last $t$ colors.
\end{case}

In this case, the graph $G$ contains no red $K_{4}$ and no blue triangle. Since $r(K_{4}, K_{3}) = 9$, we find that $4 \leq q \leq 8$. By Fact~\ref{Fact:Deg1}, for every $i$, it follows that $G_{i}$ contains no blue edge and no red triangle.

If $G_{i}$ contains no red edges for some $i$, then $G_{i}$ is colored with at most $k - 2$ colors with $|G_{i}| \leq g(r, s - 1, t - 1)$ so by induction and Inequality~\eqref{Ineq:T5},
\begin{equation}\labelz{Ineq:Case2.1}
|G_{i}| \leq g(r, s - 1, t - 1) \leq \frac{1}{8} g(r, s, t).
\end{equation}
Next if $G_{i}$ contains at least one red edge, then by Fact~\ref{Fact:Deg1}, there can be no red triangle in $G_{i}$ so $|G_{i}| \leq g(r, s - 1, t)$.
Therefore, by the induction hypothesis and Inequality~\eqref{Ineq:T4}, we have
\begin{equation}\labelz{Ineq:Case2.2}
|G_{i}| \leq g(r, s - 1, t) \leq \frac{1}{3} g(r, s, t).
\end{equation}

By Inequalities~\eqref{Ineq:Case2.1} and~\eqref{Ineq:Case2.2}, we get the key inequality
\begin{equation}\labelz{Ineq:Case2-Key}
|G| \leq \left( p_{1} \frac{1}{3} + p_{0} \frac{1}{8} \right) g(r, s, t). 
\end{equation}
This means that as long as we can show
\begin{equation}\labelz{Ineq:Case2-Desired}
\left( p_{1} \frac{1}{3} + p_{0} \frac{1}{8} \right) \leq 1,
\end{equation}
then we obtain a contradiction by showing $|G| \leq g(r, s, t)$. 
The remainder of this case can be concluded by establishing Inequality~\eqref{Ineq:Case2-Desired}, which follows by the same argument as used in the corresponding case of \cite{LMSSS17}.



\begin{case}\labelz{Case:3}
Both red and blue occur within the middle $s$ colors.
\end{case}

In this case, the graph $G$ contains no red or blue $K_{4}$ and cases $(c_{4})$ and $(c_{9})$ cannot occur since $s \geq 2$. Since $r(K_{4}, K_{4}) = 18$, we find that $4 \leq q \leq 17$.
First some bounds on the orders of the parts $G_{i}$, leading to a counterpart of Inequality~\ref{Ineq:Case2-Key}.


First suppose $G_{i}$ contains no red and no blue edges.
Then by induction and Inequality~\eqref{Ineq:T8}, imply that
\begin{equation}\labelz{Ineq:3.1}
|G_{i}| \leq g(r, s - 2, t) = \frac{1}{17} g(r, s, t).
\end{equation}
Next suppose $G_{i}$ contains no blue edges but contains some red edges.
Then by induction and Inequality~\eqref{Ineq:T7}, we get
\begin{equation}\labelz{Ineq:3.2}
|G_{i}| \leq g(r, s - 2, t + 1) \leq \frac{13}{72} g(r, s, t).
\end{equation}
Finally suppose $G_{i}$ contains both red and blue edges.
Then by induction and Inequality~\eqref{Ineq:T6}, we get
\begin{equation}\labelz{Ineq:3.3}
|G_{i}| \leq g(r, s - 2, t + 2) = \frac{13}{36} g(r, s, t).
\end{equation}


Combining Inequalities~\eqref{Ineq:3.1}, \eqref{Ineq:3.2}, and~\eqref{Ineq:3.3}, we obtain the key inequality
\begin{equation}\labelz{Ineq:Case3Key}
|G| \leq \left(p_{2} \frac{13}{36} + p_{1} \frac{13}{72} + p_{0}\frac{1}{17}\right) g(r,s,t).
\end{equation}

As in Case \ref{Case:2}, if we can show that
\begin{equation}\labelz{Ineq:Case3Desired}
p_{2} \frac{13}{36} + p_{1} \frac{13}{72} + p_{0}\frac{1}{17}\le 1,
\end{equation}
then we will arrive at a contradiction that $|G|\leq g(r,s,t)$. Thus, for the remainder of the proof of this case, it suffices to show Inequality~\eqref{Ineq:Case3Desired}.

Next we derive several facts. Within the red neighborhood of some vertex $w_{i}$ in $R$, there can be no red triangle since otherwise we would have a red $K_{4}$ in $G$. There can also be no blue $K_{4}$ within this neighborhood so that means the red neighborhood of $w_{i}$ (and similarly the blue neighborhood) has at most $r(4, 3) - 1 = 8$ vertices. Formally, we obtain the following fact.

\begin{fact}\labelz{Fact:Deg8}
For all $w_{i} \in V(R)$, we have $d_{r}(w_{i}), d_{b}(w_{i}) \leq 8$.
\end{fact}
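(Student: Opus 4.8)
The statement to prove is Fact~\ref{Fact:Deg8}: in Case~3 (both red and blue among the middle $s$ colors), every vertex $w_i$ of the reduced graph $R$ satisfies $d_r(w_i), d_b(w_i) \leq 8$.

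The plan is to argue by contradiction using the Ramsey number $r(K_4, K_3) = 9$. The key observation is the contextual hypothesis of Case~3: since both red and blue are among the middle $s$ colors, the whole graph $G$ contains no red $K_4$ and no blue $K_4$. I would first fix a vertex $w_i \in V(R)$ and examine its red neighborhood $N_r(w_i)$ in the reduced graph $R$ (I must be careful that the reduced graph is here denoted $R$, clashing with the Ramsey-number constant, but I will follow the paper's notation). Within $N_r(w_i)$, the edges of $R$ are colored red and blue. I claim this neighborhood contains no red $K_3$: if three vertices in $N_r(w_i)$ formed a red triangle in $R$, then together with $w_i$ (which joins each of them by a red edge) they would yield a red $K_4$ in the reduced graph, and since edges between parts are monochromatic this lifts to a red $K_4$ in $G$, contradicting the Case~3 hypothesis. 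Symmetrically, $N_r(w_i)$ contains no blue $K_4$, because a blue $K_4$ in the reduced graph lifts directly to a blue $K_4$ in $G$ (no use of $w_i$ needed).

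Having established that $N_r(w_i)$ is a $2$-coloring (red/blue) of a complete graph with no red $K_3$ and no blue $K_4$, I would invoke the Ramsey number $r(K_3, K_4) = 9$ (equivalently $r(K_4, K_3) = 9$) to conclude that $|N_r(w_i)| \leq 8$, i.e.\ $d_r(w_i) \leq 8$. The argument for the blue degree is completely symmetric: the blue neighborhood $N_b(w_i)$ contains no blue $K_3$ (else a blue $K_4$ via $w_i$) and no red $K_4$ (else a red $K_4$ directly), so again $d_b(w_i) \leq 8$ by $r(K_4, K_3) = 9$.

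This is a short argument and I expect no serious obstacle; the only point requiring care is the lifting step, namely that a monochromatic clique in the reduced graph corresponds to a genuine monochromatic clique in $G$. This is exactly the content of the Gallai partition structure (Theorem~\ref{Thm:G-Part}): edges between distinct parts are monochromatic and depend only on the pair of parts, so choosing one vertex from each of the relevant parts produces the claimed clique in $G$. Thus the only thing to verify is that the $K_4$ we form has all four vertices in \emph{distinct} parts, which holds automatically since $w_i$ and the vertices of $N_r(w_i)$ are distinct vertices of the reduced graph and hence correspond to distinct parts. With this in hand the fact follows immediately, matching the paper's stated bound of $r(4,3) - 1 = 8$.
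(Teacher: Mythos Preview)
Your argument is correct and matches the paper's own justification essentially verbatim: the red neighborhood of $w_i$ contains no red $K_3$ (else a red $K_4$ through $w_i$) and no blue $K_4$, so $r(K_3,K_4)=9$ gives the bound $8$, and symmetrically for blue. Your extra remark about lifting cliques from the reduced graph to $G$ is a fine clarification but is implicit in the paper.
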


If a vertex $w_{i} \in R$ is contained in a red (or blue) triangle, then the part $G_{i}$ cannot contain any red (respectively blue) edges to avoid creating a red (respectively blue) copy of $K_{4}$. The following fact is then immediate.

\begin{fact}\labelz{Fact:Triangle}
If $w_{i}$ is in a red triangle in $R$, then $w_{i} \notin V_{r}$. Similarly if $w_{i}$ is in a blue triangle in $R$, then $w_{i} \notin V_{b}$.
\end{fact}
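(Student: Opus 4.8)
The plan is to argue directly, by exhibiting a forbidden monochromatic $K_{4}$. Recall that we are in Case~\ref{Case:3}, where both red and blue lie among the middle $s$ colors, so $G$ contains neither a red $K_{4}$ nor a blue $K_{4}$; this is the only property of the case I will need.

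First I would unpack what it means for $w_{i}$ to lie in a red triangle of the reduced graph $R$: there exist vertices $w_{j}, w_{l}$ such that the edges $w_{i}w_{j}$, $w_{i}w_{l}$, and $w_{j}w_{l}$ are all red in $R$. By the definition of the Gallai partition (one color on all edges between each pair of parts), every edge of $G$ between $G_{i}$ and $G_{j}$, between $G_{i}$ and $G_{l}$, and between $G_{j}$ and $G_{l}$ is then red.

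Next, for a contradiction I would suppose $w_{i} \in V_{r}$, that is, $G_{i}$ contains a red edge $uv$. Choosing arbitrary vertices $x \in G_{j}$ and $y \in G_{l}$, I would verify that all six pairs among $\{u, v, x, y\}$ receive color red: $uv$ is the assumed red edge inside $G_{i}$; the pairs $ux, vx$ lie between $G_{i}$ and $G_{j}$; the pairs $uy, vy$ lie between $G_{i}$ and $G_{l}$; and $xy$ lies between $G_{j}$ and $G_{l}$. Hence $\{u, v, x, y\}$ spans a red $K_{4}$ in $G$, contradicting the defining property of Case~\ref{Case:3}. Therefore $w_{i} \notin V_{r}$, and the blue statement follows verbatim after interchanging the roles of red and blue.

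There is essentially no obstacle here, which is why the text can call the fact immediate; the only point requiring care is the edge between $G_{j}$ and $G_{l}$. This is precisely why the hypothesis must be that the three parts form a genuine red triangle in $R$, rather than merely that $w_{j}$ and $w_{l}$ are red neighbors of $w_{i}$: that third red edge $w_{j}w_{l}$ is exactly what promotes the configuration (a red $K_{3}$ among the parts together with one interior red edge) into a full red $K_{4}$.
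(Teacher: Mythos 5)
Your proof is correct and is exactly the argument the paper has in mind: the paper dismisses this as immediate with the one-line observation that a red edge inside $G_{i}$ together with one vertex from each of the two other parts of the red triangle in $R$ yields a red $K_{4}$, which is precisely the four-vertex set $\{u,v,x,y\}$ you exhibit. Your closing remark about why the full triangle (and not merely two red neighbors) is needed is also accurate, since without the red edge $w_{j}w_{l}$ the pair $xy$ need not be red.
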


If $d_{r}(w_{i}) \geq 4$ for some $w_{i} \in V(R)$, then the red neighborhood of $w_{i}$ certainly must contain at least one red edge since otherwise, if all edges were blue, we would have a blue $K_{4}$. Thus $w_i$ is in a red triangle in $R$. A similar observation holds with the roles of red and blue switched. Thus from Fact~\ref{Fact:Triangle}, we obtain the following fact.

\begin{fact}\labelz{Fact:Deg4}
If $d_{r}(w_{i}) \geq 4$ then $w_{i} \notin V_{r}$, and if $d_{b}(w_{i}) \geq 4$ then $w_{i} \notin V_{b}$.
\end{fact}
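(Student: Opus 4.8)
The plan is to prove the statement directly by producing a monochromatic triangle through $w_i$ in the reduced graph and then invoking Fact~\ref{Fact:Triangle}. Since the two halves of the claim are symmetric under swapping the roles of red and blue, I would only argue the red case in full and then assert the blue case by symmetry.

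First I would fix a vertex $w_i \in V(R)$ with $d_r(w_i) \geq 4$ and let $N$ be its red neighborhood in the reduced graph $R$, so that $|N| \geq 4$. Recall that in Case~\ref{Case:3} the reduced graph is colored only with red and blue, so the complete graph induced on $N$ is a $2$-coloring in these two colors. The key observation is that $N$ must contain at least one red edge: if instead every edge inside $N$ were blue, then, because $|N| \geq 4$, any four vertices of $N$ would span a blue $K_4$ in $R$, and selecting a single vertex from each of the corresponding four parts of the Gallai partition would yield a blue $K_4$ in $G$, contradicting the standing assumption of Case~\ref{Case:3} that $G$ contains no blue $K_4$.

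Having secured a red edge $uv$ with $u, v \in N$, I would observe that $w_i u$ and $w_i v$ are both red by the definition of the red neighborhood, so $\{w_i, u, v\}$ spans a red triangle in $R$. Fact~\ref{Fact:Triangle} then forces $w_i \notin V_r$, which is exactly the desired conclusion. Interchanging red and blue throughout, and using $d_b(w_i) \geq 4$ together with the absence of a red $K_4$ in $G$, gives the companion statement that $d_b(w_i) \geq 4$ implies $w_i \notin V_b$.

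I do not expect any genuine obstacle here, since the argument is a short pigeonhole/Ramsey observation. The only point that requires care is the passage from a monochromatic clique in the reduced graph to one in $G$: a monochromatic $K_4$ on four parts lifts to a monochromatic $K_4$ in $G$ precisely because the edges between distinct parts of a Gallai partition are monochromatic, so one may take one vertex from each part. This lifting is what makes the $K_4$-freeness of $G$ exploitable at the level of $R$, and it is the same mechanism already used to establish Fact~\ref{Fact:Triangle}.
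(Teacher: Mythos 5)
Your argument is correct and is essentially identical to the paper's: both note that if all edges inside the red neighborhood (of size at least $4$) were blue, a blue $K_{4}$ would arise, so some red edge exists there, placing $w_{i}$ in a red triangle of the reduced graph, whence Fact~\ref{Fact:Triangle} applies. Your extra remark about lifting a monochromatic clique from the reduced graph to $G$ is the same mechanism the paper uses implicitly throughout Case~\ref{Case:3}.
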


If two parts $G_{i}$ and $G_{j}$ each contain at least one red edge, say $e_{i}$ and $e_{j}$ respectively, then the edge $w_{i}w_{j}$ in $R$ cannot be red since otherwise the subgraph induced on the vertices of $e_{i} \cup e_{j}$ is a red $K_{4}$. Thus, we obtain the following fact.

\begin{fact}\labelz{Fact:Clique}
The subgraph induced on $V_{r}$ is a blue clique and the subgraph induced on $V_{b}$ is a red clique.
\end{fact}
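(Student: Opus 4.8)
The plan is to derive both statements directly from the defining structure of Case~\ref{Case:3}, namely that $G$ contains no red $K_4$ and no blue $K_4$, using the same one-step blow-up contradiction that produced the immediately preceding facts. It suffices to prove the first assertion, that the subgraph of $R$ induced on $V_{r}$ is a blue clique; the second assertion, that the subgraph induced on $V_{b}$ is a red clique, then follows verbatim by interchanging the roles of red and blue.

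First I would fix two arbitrary distinct vertices $w_{i}, w_{j} \in V_{r}$. By the definition of $V_{r}$, each of the corresponding parts contains at least one red edge, say $e_{i} \subseteq G_{i}$ and $e_{j} \subseteq G_{j}$. Since we are in Case~\ref{Case:3}, the reduced graph $R$ is colored with only red and blue, so the edge $w_{i}w_{j}$ is monochromatic, and every edge of $G$ running between $G_{i}$ and $G_{j}$ carries this single color.

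Next I would rule out the possibility that $w_{i}w_{j}$ is red. If it were, then all edges of $G$ between $G_{i}$ and $G_{j}$ would be red. Consider the four endpoints of $e_{i}$ and $e_{j}$ (which are distinct, as the parts are disjoint): the edges $e_{i}$ and $e_{j}$ are red by choice, and the four cross edges joining an endpoint of $e_{i}$ to an endpoint of $e_{j}$ are red because $w_{i}w_{j}$ is red. These four vertices would then induce a red $K_{4}$ in $G$, contradicting the hypothesis of Case~\ref{Case:3}. Hence $w_{i}w_{j}$ must be blue, and since $w_{i}, w_{j}$ were arbitrary, every edge of $R$ within $V_{r}$ is blue, so $V_{r}$ induces a blue clique.

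I do not expect a genuine obstacle here; the argument is a single blow-up contradiction of exactly the kind used just above for Fact~\ref{Fact:Clique}'s predecessors. The only point requiring care is the bookkeeping: $V_{r}$ and $V_{b}$ live in the reduced graph $R$, where each edge is already monochromatic, so that a red reduced edge $w_{i}w_{j}$ genuinely forces every cross edge in $G$ to be red and thereby completes the forbidden $K_{4}$.
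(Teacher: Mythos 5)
Your proof is correct and is essentially identical to the paper's own argument: the paper likewise observes that if $w_{i}w_{j}$ were red then the endpoints of the red edges $e_{i} \subseteq G_{i}$ and $e_{j} \subseteq G_{j}$ would induce a red $K_{4}$, forcing $w_{i}w_{j}$ to be blue, with the statement for $V_{b}$ following by symmetry. No issues.
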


Next, we prove three helpful claims about the values of $p_{0}, p_{1}$, and $p_{2}$.

\begin{claim}\labelz{Claim:p2<=1}
$p_2=|V_r\cap V_b|\le 1$ and if $p_{2} = 1$, then $q \leq 7$.
\end{claim}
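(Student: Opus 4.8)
The plan is to prove the two assertions of Claim~\ref{Claim:p2<=1} in sequence, relying mainly on Fact~\ref{Fact:Clique} together with the degree bounds of Fact~\ref{Fact:Deg4} and Fact~\ref{Fact:Deg8}. First I would show $p_2 \le 1$. Suppose for contradiction that there are two distinct parts $w_i, w_j \in V_r \cap V_b$, so each contains both a red edge and a blue edge. By Fact~\ref{Fact:Clique}, since both $w_i, w_j \in V_r$, the edge $w_iw_j$ in the reduced graph $R$ must be blue; but since both $w_i, w_j \in V_b$ as well, that same edge $w_iw_j$ must be red. This is a contradiction, since $w_iw_j$ carries a single color. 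Hence $|V_r \cap V_b| \le 1$.

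Next I would establish the conditional bound: if $p_2 = 1$, then $q \le 7$. Let $w_i$ be the unique part in $V_r \cap V_b$, so $G_i$ contains at least one red edge and at least one blue edge. Because $w_i \in V_r$, Fact~\ref{Fact:Deg4} forces $d_r(w_i) \le 3$; because $w_i \in V_b$, the same fact forces $d_b(w_i) \le 3$. Since $d_r(w_i) + d_b(w_i) = q - 1$ by the degree identity established just before Fact~\ref{Fact:Deg1}, we obtain $q - 1 = d_r(w_i) + d_b(w_i) \le 3 + 3 = 6$, i.e.\ $q \le 7$, as desired.

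The step most requiring care is the invocation of Fact~\ref{Fact:Deg4} in the second part: that fact gives the contrapositive form, namely that $d_r(w_i) \ge 4 \Rightarrow w_i \notin V_r$, so from $w_i \in V_r$ one concludes $d_r(w_i) \le 3$, and symmetrically for blue. One should double-check that these degrees are computed within the reduced graph $R$ (the same graph in which Facts~\ref{Fact:Deg8} and~\ref{Fact:Deg4} are stated) and that the identity $d_r(w_i) + d_b(w_i) = q - 1$ applies to this vertex, which it does for every vertex of $D = R$. No genuine obstacle arises here; the whole argument is a short deduction from the already-established structural facts, and the only subtlety is keeping the red/blue roles straight when combining Fact~\ref{Fact:Clique} (for $p_2 \le 1$) and Fact~\ref{Fact:Deg4} (for the degree bound).
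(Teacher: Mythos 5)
Your proposal is correct and takes essentially the same approach as the paper: the first half (deducing a contradiction from Fact~\ref{Fact:Clique} when two parts lie in $V_r\cap V_b$) is identical, and for the second half the paper merely inlines the pigeonhole argument (a vertex of degree $q-1\ge 7$ has four neighbours in one colour, forcing a monochromatic $K_4$) whose content is exactly Fact~\ref{Fact:Deg4}, which you instead invoke directly via its contrapositive to get $q-1=d_r(w_i)+d_b(w_i)\le 3+3=6$. Since Fact~\ref{Fact:Deg4} is established before the claim, your slightly more streamlined routing is perfectly legitimate and yields the same bound.
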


\begin{proof}
If we have $w_i,w_j\in V_{r} \cap V_{b}$, then by Fact \ref{Fact:Clique}, $w_i,w_j\in V_{r}$ implies that the edge $w_iw_j$ is blue in $R$, while $w_i,w_j\in V_{b}$ implies that $w_iw_j$ is red, a contradiction.

Now suppose $p_{0} = 1$ and, for a contradiction, that $q \geq 8$. If $w_{1} \in V_{r} \cap V_{b}$, then there are at least $4$ other vertices, say $W = \{w_{2}, w_{3}, w_{4}, w_{5}\}$ with all one color, say red, on edges to $w_{1}$. Since $w_{1} \in V_{r} \cap V_{b}$ and to avoid a red $K_{4}$, all edges between vertices in $W$ must be blue, forming a blue $K_{4}$ for a contradiction.
\end{proof}

\begin{claim}\labelz{Claim:VbVr4} 
$|V_{r}| + |V_{b}| \leq 4$.
\end{claim}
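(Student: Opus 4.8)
The plan is to establish the equivalent bound $|V_{r}| + |V_{b}| \le 4$ (note $|V_{r}| + |V_{b}| = p_{1} + 2p_{2}$) by first bounding each of $|V_{r}|$ and $|V_{b}|$ separately by $3$, and then eliminating every configuration in which the sum could reach $5$.

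First I would observe that the reduced graph $R$ contains no blue $K_{4}$ and no red $K_{4}$: a monochromatic $K_{4}$ in $R$ would, upon choosing one vertex from each of the four corresponding parts, produce a monochromatic $K_{4}$ in $G$, which is forbidden in Case~\ref{Case:3}. Combined with Fact~\ref{Fact:Clique} (that $V_{r}$ induces a blue clique and $V_{b}$ induces a red clique in $R$), this immediately yields $|V_{r}| \le 3$ and $|V_{b}| \le 3$, since a clique of size $4$ in $R$ would be a monochromatic $K_{4}$.

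Next, since $|V_{r}|, |V_{b}| \le 3$, if $|V_{r}| + |V_{b}| \ge 5$ then at least one of the two sets has size exactly $3$; by the red/blue symmetry I may assume $|V_{r}| = 3$. Then $V_{r} = \{a, b, c\}$ is a blue triangle, so each of $a, b, c$ lies in a blue triangle and hence lies outside $V_{b}$ by Fact~\ref{Fact:Triangle}; thus $V_{r} \cap V_{b} = \emptyset$ and therefore $|V_{b}| \ge 2$. I would then choose a red edge $xy$ inside the red clique $V_{b}$, where $x, y \notin \{a, b, c\}$. Because $x \in V_{b}$ cannot lie in a blue triangle (Fact~\ref{Fact:Triangle} again), $x$ can be blue to at most one of $a, b, c$ --- otherwise it would close a blue triangle with one of the blue edges of $V_{r}$ --- so $x$ sends a red edge to at least two of $a, b, c$, and the same holds for $y$.

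The crux is then a short pigeonhole argument: two subsets of the three-element set $\{a, b, c\}$, each of size at least $2$, must share a common element, so some vertex, say $a$, is joined in red to both $x$ and $y$; together with the red edge $xy$, this produces a red triangle through $a$. But $a \in V_{r}$, contradicting Fact~\ref{Fact:Triangle}. This contradiction forces $|V_{r}| + |V_{b}| \le 4$. I expect the only delicate point to be the bookkeeping that pins down at least two red edges from each of $x$ and $y$ into the triangle $V_{r}$ (which is exactly where the blue-triangle avoidance of Fact~\ref{Fact:Triangle} does the work); once that is in place, the pigeonhole and the resulting red triangle are routine.
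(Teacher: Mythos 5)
Your proof is correct, and it reaches the bound by a genuinely different endgame than the paper's. Both arguments share the same foundation: Fact~\ref{Fact:Clique} gives $|V_{r}|, |V_{b}| \leq 3$ because a fourth vertex would force a monochromatic $K_{4}$ in the reduced graph, and Fact~\ref{Fact:Triangle} controls which edges can appear between $V_{r}$ and $V_{b}$. From there the paper splits into cases: it first disposes of $V_{r} \cap V_{b} \neq \emptyset$ separately (showing both sets then have size at most $2$), and for the disjoint case it counts edges across the bipartition --- at most one red edge leaving each vertex of $V_{r}$ and at most one blue edge leaving each vertex of $V_{b}$, so at most $6$ (resp.\ $5$) edges against the $9$ (resp.\ $6$) present --- handling $(3,3)$ and $(3,2)$ as separate numerical contradictions. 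You instead observe that $|V_{r}| + |V_{b}| \geq 5$ forces one set, say $V_{r}$, to be a blue triangle, which by Fact~\ref{Fact:Triangle} automatically makes the two sets disjoint (so no separate intersection case is needed), and then a single pigeonhole on the red neighborhoods in $V_{r}$ of two vertices $x, y \in V_{b}$ exhibits a concrete red triangle through a vertex of $V_{r}$, contradicting Fact~\ref{Fact:Triangle} directly. What your route buys is uniformity --- one argument covers $(3,3)$, $(3,2)$, and the nonempty-intersection configuration at once --- and it replaces a counting inequality with an explicit forbidden substructure; what the paper's counting buys is that it generalizes mechanically if the clique sizes in Fact~\ref{Fact:Clique} were larger. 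The only point worth making fully explicit in your write-up is that ``blue to at most one of $a,b,c$'' converts to ``red to at least two'' precisely because the reduced graph of a Gallai partition carries only the two colors red and blue, but that is immediate from Theorem~\ref{Thm:G-Part}.
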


\begin{proof}
Suppose first that there is a vertex $w_{i} \in V_{r} \cap V_{b}$. Then by Fact \ref{Fact:Triangle}, $w_{i}$ is contained in neither a red triangle nor a blue triangle within $R$. By Fact \ref{Fact:Clique}, any vertex of $V_r\setminus\{w_i\}$ must be a blue neighbor of $w_i$ in $R$, and since the blue neighborhood of $w_i$ induces a red clique in $R$, again Fact \ref{Fact:Clique} implies that there can only be at most one vertex in $V_r\setminus\{w_i\}$. This means that $|V_{r}| \leq 2$, and similarly, $|V_{b}| \leq 2$.

Thus, we may assume $V_{r} \cap V_{b} = \emptyset$. We next claim that $|V_{r}| \leq 3$ and $|V_{b}| \leq 3$. If $|V_{r}| \geq 4$, then by Fact \ref{Fact:Clique}, the subgraph of $R$ induced on the vertices of $V_{r}$ contains a blue $K_{4}$, a contradiction. Thus $|V_{r}| \leq 3$, and symmetrically $|V_{b}| \leq 3$.

Now suppose that $|V_{r}| = |V_{b}| = 3$. If there exists a vertex $w_i\in V_r$ with at least two red neighbors in $V_b$, then by Fact \ref{Fact:Clique}, $w_i$ is in a red triangle in $R$, and this contradicts Fact \ref{Fact:Triangle}. Thus, there can be at most one red edge from each vertex in $V_{r}$ to $V_{b}$, and similarly, at most one blue edge from each vertex in $V_{b}$ to $V_{r}$, for a total of at most $6$ edges. But $R$ has $9$ edges between $V_r$ and $V_b$, a contradiction. Finally suppose $|V_{r}| = 3$ and $|V_{b}| = 2$. Then again, there can be at most one red edge from each vertex of $V_{r}$ to $V_{b}$, and at most one blue edge from each vertex of $V_{b}$ to $V_{r}$, for a total of at most $5$ edges, while $R$ has $6$ edges between $V_r$ and $V_b$, another contradiction. Symmetrically we cannot have $|V_{r}| = 2$ and $|V_{b}| = 3$, thus completing the proof of Claim~\ref{Claim:VbVr4}.
\end{proof}


\begin{claim}\labelz{Claim:VbVr3}
If either $|V_{r}| \geq 2$ or $|V_{b}| \geq 2$, then
\begin{description}
\item{{\rm (a)}} $q \leq 10$,
\item{{\rm (b)}} If $q = 10$, then $p_{2} = 0$ and $p_{1} = 2$, and
\item{{\rm (c)}} If $q = 9$, then $p_{1} + p_{2} \leq 3$.
\end{description}
\end{claim}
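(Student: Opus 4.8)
The plan is to fix two parts lying in $V_r$, classify every other part by how it sees this pair, bound each class by a clique argument to get (a), and then obtain (b) and (c) by pushing the resulting inequality to equality.

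First I would dispose of the degenerate case: by Claim~\ref{Claim:p2<=1}, if $p_2=1$ then $q\le 7$, so (a) holds and (b), (c) are vacuous. Hence I may assume $p_2=0$, i.e.\ $V_r\cap V_b=\emptyset$, so that $p_1=|V_r|+|V_b|$. Since every fact used is symmetric under exchanging red and blue, I may assume $|V_r|\ge 2$, fix $w_1,w_2\in V_r$, and note $w_1w_2$ is blue by Fact~\ref{Fact:Clique}. I then partition the remaining $q-2$ parts according to the colors of their reduced-graph edges to the pair $(w_1,w_2)$ into four sets $RR$, $RB$, $BR$, $BB$ (e.g.\ $BR$ meaning blue to $w_1$ and red to $w_2$).

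For (a) I would bound the four sets. The set $BB$ is a red clique, since two vertices $u,v$ blue to both $w_1,w_2$ would complete a blue $K_4$ on $\{w_1,w_2,u,v\}$ unless $uv$ is red; thus $|BB|\le 3$. Because $w_1\in V_r$ lies in no red triangle (Fact~\ref{Fact:Triangle}), its red neighborhood $RR\cup RB$ is a blue clique, so $|RR\cup RB|\le 3$. Finally $BR$ lies in the red neighborhood of $w_2$, hence is a blue clique, and all of it is blue to $w_1$, so $\{w_1\}\cup BR$ is a blue clique and $|BR|\le 2$; symmetrically $|RB|\le 2$. Adding the three pieces gives $q-2=|RR\cup RB|+|BR|+|BB|\le 3+2+3=8$, which is (a).

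For (b) I would analyze equality. When $q=10$ all inequalities are tight, forcing $|RR\cup RB|=3$, $|BR|=2$, $|BB|=3$, and (since $|RB|\le 2$) $|RR|=1$, $|RB|=2$. A third vertex $w_3\in V_r$ must land in $BB$ and would then sit inside the red triangle $BB$, contradicting Fact~\ref{Fact:Triangle}; hence $|V_r|=2$. Moreover any $v\in V_b$ has, by Fact~\ref{Fact:Triangle}, a blue neighborhood that is a red clique; but checking the four possible locations of $v$ against the clique structure just pinned down always produces two blue-neighbors of $v$ that are blue to each other (for instance $v\in BB$ makes $w_1,w_2$ two such neighbors), a contradiction. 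Thus $|V_b|=0$ and $p_1=|V_r|+|V_b|=2$. For (c), with $q=9$ and $p_2=0$, I must rule out $p_1=4$; since $|V_r|+|V_b|\le 4$ by Claim~\ref{Claim:VbVr4} with each term at most $3$, this means the cases $(|V_r|,|V_b|)\in\{(2,2),(3,1)\}$ and their red-blue mirrors. In each, tightness of the same three bounds pins down $|RR|,|RB|,|BR|,|BB|$, and then a vertex of the minority color is placed among the four sets and, exactly as in (b), forced to have two mutually-blue blue-neighbors, contradicting Fact~\ref{Fact:Triangle}; so each configuration forces $q\le 8$, whence $q=9$ gives $p_1+p_2=p_1\le 3$.

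The main obstacle is precisely the equality bookkeeping in (b) and (c): for the tight values of $|RR|,|RB|,|BR|,|BB|$ one must track where an extra special vertex (a third vertex of $V_r$, or any vertex of $V_b$) can sit and verify in every location that either the red-clique structure of $V_b$ or the no-red-triangle property of $V_r$ is violated. Everything is routine once the four-set partition and the three clique bounds are in hand, but that is where all the case distinctions concentrate.
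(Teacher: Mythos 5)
Your proposal is correct and follows essentially the same route as the paper: your four classes $RR$, $RB$, $BR$, $BB$ are exactly the paper's $R_{1,2}$, $R_{1}$, $R_{2}$, $B$, and the clique bounds $|RR\cup RB|\leq 3$, $|BR|\leq 2$, $|BB|\leq 3$ are the same inequalities the paper derives before summing to get $q\leq 10$. The only (cosmetic) differences are that you dispatch $p_{2}=1$ up front via Claim~\ref{Claim:p2<=1} and phrase the equality analysis for (b) and (c) as ``place a vertex of $V_{r}$ or $V_{b}$ and exhibit a forbidden monochromatic triangle through it,'' whereas the paper shows directly that the relevant parts must be free; note only that for $q=9$ the three bounds are not all tight (there is one unit of slack, giving three sub-configurations to check), but the mechanism you describe does close each of them.
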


In particular, if $|V_{r}| + |V_{b}| \geq 3$, then either $|V_{r}| \geq 2$ or $|V_{b}| \geq 2$ so this claim may be applied.

\begin{proof}
Working under the assumption that $|V_{r}| + |V_{b}| \geq 3$, without loss of generality, we may assume $|V_{r}| \geq 2$. Suppose $G_{1}$ and $G_{2}$ each contain at least one red edge. By Fact~\ref{Fact:Clique}, all edges from $G_{1}$ to $G_{2}$ must be blue. For $i$ with $1 \leq i \leq 2$, define
\beqs
R_{i} & = & \{ j | j \geq 3, \text{ and $G_{j}$ is joined to $G_{i}$ by red edges}\\
~ & ~ & \text{and to $G_{3 - i}$ by blue edges} \}\\
R_{1,2} & = & \{ j | j \geq 3, \text{ and $G_{j}$ is joined to $G_{i}$ and $G_{3 - i}$ by red edges} \}\\
B & = & \{ j | j \geq 3, \text{ and $G_{j}$ is joined to $G_{i}$ and $G_{3 - i}$ by blue edges} \}
\eeqs


If $j_{1}, j_{2} \in R_{i}$, then $G_{j_{1}}$ and $G_{j_{2}}$ are joined by blue edges to avoid a red $K_{4}$. Suppose that $|R_{i}| \geq 3$ for some $i$, say $|R_{1}| \geq 3$ with $\{j_{1}, j_{2}, j_{3}\} \subseteq R_{1}$. Then there is a blue $K_{4}$ with vertices chosen from $G_{2}, G_{j_{1}}, G_{j_{2}}, G_{j_{3}}$, a contradiction. This means that $|R_{i}| \leq 2$ for each $i$ with $1 \leq i \leq 2$.

If $j_{1}, j_{2} \in R_{i} \cup R_{1, 2}$, then again $G_{j_{1}}$ and $G_{j_{2}}$ are joined by blue edges to avoid a red $K_{4}$. Then to avoid a blue $K_{4}$, it is clear that $|R_{1, 2}| + |R_{i}| \leq 3$ for each $i$ with $1 \leq i \leq 2$. Exchanging the roles of the colors, it is also clear that $|B| \leq 3$.

Then if:
\bi
\item $|R_{1, 2}| = 0$, then $|R_{1, 2}| + |R_{1}| + |R_{2}| + |B| \leq 0 + 2\cdot 2 + 3 = 7$,
\item $|R_{1, 2}| = 1$, then $|R_{1, 2}| + |R_{1}| + |R_{2}| + |B| \leq 1 + 2\cdot 2 + 3 = 8$,
\item $|R_{1, 2}| = 2$, then $|R_{1, 2}| + |R_{1}| + |R_{2}| + |B| \leq 2 + 2\cdot 1 + 3 = 7$,
\item $|R_{1, 2}| = 3$, then $|R_{1, 2}| + |R_{1}| + |R_{2}| + |B| \leq 3 + 2\cdot 0 + 3 = 6$,
\ei
so in every case, $q = 2 + |R_{1, 2}| + |R_{1}| + |R_{2}| + |B| \leq 2 + 8 = 10$, completing the proof of (a).


If $q = 10$, then we must have $|R_{1, 2}| + |R_{1}| + |R_{2}| + |B| = 1 + 2\cdot 2 + 3 = 8$ so $|R_{1, 2}| = 1$, $|R_{1}| = |R_{2}| = 2$ and $|B| = 3$. By the observations above, all edges between pairs of parts with indices in $B$ are red, meaning that each of these parts is in a red triangle in $R$. Similarly, all edges between pairs of parts with indices in $R_{1, 2} \cup R_{i}$ are blue for each $i$ with $1 \leq i \leq 2$, meaning that each of these parts is in a blue triangle in $R$. Thus, for all $j$ with $3 \leq j \leq 10$, we have $G_{j}$ contains no red or blue edges. Similarly, with $R_{i} = 2$ for each $i$, there can be no blue edges in either $G_{1}$ or $G_{2}$. This means that $p_{2} = 0$ and $p_{1} = 2$, completing the proof of (b).

Finally suppose $q = 9$. What remains of the proof of Claim~\ref{Claim:VbVr3}, we break into cases based on the value of $|R_{1, 2}|$.

If $|R_{1, 2}| = 0$, then $|R_{1}| = |R_{2}| = 2$ and $|B| = 3$. As in the case when $q = 10$, $G_{1}$ and $G_{2}$ each contain no blue edges and for all $j$ with $3 \leq j \leq 10$, $G_{j}$ must contain no red or blue edges. Thus, $p_{2} = 0$ and $p_{1} = 2$.

If $|R_{1, 2}| = 1$, then it is possible that either $|R_{1}| = 1$ or $|R_{2}| = 1$, say $|R_{1}| = 1$. Then the set $G_{j}$ corresponding to $R_{1}$ can have blue edges but all other sets $G_{j}$ with $j \geq 3$ must have no blue and no red edges. Thus, $p_{1} + p_{2} \leq 3$. On the other hand, if $|R_{1}| = |R_{2}| = 2$, then it is possible that $|B| = 2$. Then at most one of the sets $G_{j}$ corresponding to $B$ can have red edges but all other sets $G_{j}$ with $j \geq 3$ must have no blue and no red edges. Thus, $p_{1} + p_{2} \leq 3$.

If $|R_{1, 2}| = 2$, then $|R_{1}| = |R_{2}| = 1$ and $|B| = 3$. Each of the sets with indices in $R_{1} \cup R_{2} \cup R_{1, 2}$ is contained in a blue triangle in $R$, meaning that for all $j$ with $3 \leq j \leq 10$, the set $G_{j}$ contains no red or blue edges. Each set $G_{1}$ and $G_{2}$ may contain blue edges or not but in either case, $p_{1} + p_{2} = 2$, completing the proof of (c) and Claim~\ref{Claim:VbVr3}.
\end{proof}


We now consider subcases based on the value of $q$.

\begin{subcase}\labelz{Subcase:3.1}
$13 \leq q \leq 17.$
\end{subcase}

By Fact~\ref{Fact:Deg8}, we have $d_{r}(w_{i}), d_{b}(w_{i}) \leq 8$ so this means that $d_{b}(w_{i}), d_{r}(w_{i}) \geq 4$ for all $w_{i} \in V(R)$. This means that $G_{i}$ contains no red or blue edges for all $i$. Thus $p_2=p_1=0$, $p_0=q$, and
\[
p_{2} \frac{16/3}{17} + p_{1} \frac{8/3}{17} + p_{0}\frac{1}{17}=\frac{q}{17}\le 1,
\]
as required for Inequality~\eqref{Ineq:Case3Desired}.

\begin{subcase}\labelz{Subcase:3.2}
$4 \leq q \leq 10$.
\end{subcase}

By Claim \ref{Claim:p2<=1}, we have $p_2\le 1$. First suppose $p_{2} = 1$. Then if $q \geq 8$, every vertex $w_{i} \in V(R)$ must have at least $4$ edges in one color and, by Fact~\ref{Fact:Deg4}, every set $G_{i}$ is missing either red or blue, contradicting the assumption that $p_{2} = 1$. Thus, we have $4 \leq q \leq 7$. By Claim~\ref{Claim:VbVr4}, since $p_{2} = 1$, we have $p_{1} =|V_r|+|V_b|-2p_2\leq 2$. Thus,
$$
p_{2} \frac{13}{36} + p_{1} \frac{13}{72} + p_{0} \frac{1}{17} \leq 1\cdot \frac{13}{36} + 2\cdot \frac{13}{72} + (q - 3)\cdot \frac{1}{17} \leq \frac{1172}{1224} < 1.
$$

Next suppose $p_{2} = 0$ so by Claim~\ref{Claim:VbVr4}, $p_{1} \leq 4$.
If $q \leq 8$, we get
$$
p_{2} \frac{13}{36} + p_{1} \frac{13}{72} + p_{0} \frac{1}{17} \leq 4\cdot \frac{13}{72} + 4\cdot \frac{1}{17} = \frac{1172}{1224} < 1.
$$
If $q = 10$, then by Claim~\ref{Claim:VbVr3}, we have $p_{1} = 2$, so
$$
p_{2} \frac{13}{36} + p_{1} \frac{13}{72} + p_{0} \frac{1}{17} \leq 2\cdot \frac{13}{72} + 8\cdot \frac{1}{17} = \frac{1018}{1224} < 1.
$$
If $q = 9$, then by Claim~\ref{Claim:VbVr3}, we have $p_{1} \leq 3$, so
$$
p_{2} \frac{13}{36} + p_{1} \frac{13}{72} + p_{0} \frac{1}{17} \leq 3\cdot \frac{13}{72} + 6\cdot \frac{1}{17} = \frac{1095}{1224} < 1.
$$

\begin{subcase}\labelz{Subcase:3.3}
$q \in\{11, 12\}$.
\end{subcase}

By Claim~\ref{Claim:VbVr3}, we see that $|V_{r}| + |V_{b}| \leq 2$. By Claim~\ref{Claim:p2<=1}, we have $p_{2} = 0$ so $p_{1} \leq 2$ and
$$
p_{2} \frac{13}{36} + p_{1} \frac{13}{72} + p_{0} \frac{1}{17} \leq 2\cdot \frac{13}{72} + 10\cdot \frac{1}{17} = \frac{1162}{1224} < 1
$$
completing the proof of this subcase, and the proof of Case~\ref{Case:3}.

\begin{case}\labelz{Case:4}
Red is among the first $r$ colors while blue is among the last $t$ colors.
\end{case}


In this case, the graph $G$ contains no red $K_{5}$ and no blue triangle. Since $r(K_{3}, K_{5}) = 14$, we find that $4 \leq q \leq 13$. By Fact~\ref{Fact:Deg1}, each part of the Gallai partition has both red and blue incident edges in the reduced graph. This means that there can be no red $K_{4}$ and no blue edge in any part, leading to the following main subcases.

\be
\item \labelz{SubCase4.1} No part has any red edges,
\item \labelz{SubCase4.3} There is a part with a red $K_{3}$, and
\item \labelz{SubCase4.2} There is a part with red edges but no part has a red $K_{3}$.
\ee


We first consider Subcase~\ref{SubCase4.1}. Since every part $G_{i}$ contains no red or blue edges, this means that $|G_{i}| \leq g(r - 1, s, t - 1)$.
By induction and Inequality~\eqref{Ineq:T13}, we get
$$
|G| = \sum_{i = 1}^{q} |G_{i}| \leq \sum_{i = 1}^{q} g(r - 1, s, t - 1) \leq \frac{q}{13} g(r, s, t) \leq g(r, s, t) < |G|,
$$
a contradiction, completing the proof of Subcase~\ref{SubCase4.1}.

Next we consider Subcase~\ref{SubCase4.3}. Let $G_{1}$ be a part of the Gallai partition containing a red triangle. Partition the remaining vertices of $G$ into $G_{R}$ and $G_{B}$ such that $G_{R}$ contains all vertices in parts having red edges to $G_{1}$ and $G_{B}$ contains all vertices in parts having blue edges to $G_{1}$.

Certainly $G_{R}$ contains no red edges and no blue triangle and $G_{B}$ contains no red $K_{5}$ and no blue edges. This means that
$$
|G_{R}| \leq g(r - 1, s, t) ~ ~ ~ ~ ~ \text{and} ~ ~ ~ ~ ~ |G_{B}| \leq g(r, s, t - 1).
$$
Furthermore, since $G_{1}$ contains a red triangle but no red $K_{4}$ and no blue edges, we get
$$
|G_{1}| \leq g(r - 1, s + 1, t - 1).
$$


By considering each of the cases of the statement ($(c_{1})$ up to $(c_{11})$) individually across Types T1, T10, and T12 in Tables~\ref{Table:1} and~\ref{Table:2}, we see that in each of the cases, we have
$$
g(r - 1, s + 1, t - 1) + g(r - 1, s, t) + g(r, s, t - 1) \leq g(r, s, t)
$$
except in the two cases $(c_{7})$ and $(c_{10})$.
We sharpen the bounds by observing that $G_{1}$ contains a red triangle, so every pair of parts in $G_{R}$ are joined by blue edges. Since $G$ has no blue triangle, this means that $G_{R}$ must have at most two parts. If $G_{R}$ has only one part, then by Fact~\ref{Fact:Deg1}, it must have blue edges to some part in $G_{B}$, and so cannot contain a blue edge so $|G_{R}| \leq g(r - 1, s, t - 1)$. If $G_{R}$ has two parts, then similarly each cannot contain blue edges meaning that $|G_{R}| \leq 2g(r - 1, s, t - 1)$. Then the calculations for these two specific cases become
\begin{description}
\item[$(c_{7})$]: $2 \frac{3}{R} + \frac{221}{16R} + \frac{1}{2} < 1$, and
\item[$(c_{10})$]: $\frac{2}{15} + \frac{17}{48} + \frac{1}{2} < 1$.
\end{description}
In either case, $|G| = |G_{1}| + |G_{R}| + |G_{B}| < |G|$, a contradiction.



Finally we consider Subcase~\ref{SubCase4.2}. Let $G_{1}$ be a part of the Gallai partition containing at least one red edge (but no red triangle). Again partition the remaining vertices of $G$ into $G_{R}$ and $G_{B}$ such that $G_{R}$ contains all vertices in parts having red edges to $G_{1}$ and $G_{B}$ contains all vertices in parts having blue edges to $G_{1}$.

Then $G_{R}$ contains no red $K_{3}$ and no blue $K_{3}$. Similarly, $G_{B}$ contains no red $K_{5}$ and no blue edges at all. This means that
$$
|G_{R}| \leq g(r - 1, s, t + 1) ~ ~ ~ ~ ~ \text{and} ~ ~ ~ ~ ~ |G_{B}| \leq g(r, s, t - 1).
$$
Furthermore, since $G_{1}$ contains red edges but no red triangle and no blue edges, we get
$$
|G_{1}| \leq g(r - 1, s, t).
$$
By considering each of the cases of the statement ($(c_{1})$ up to $(c_{11})$) individually across Types T1, T11, and T12 in Tables~\ref{Table:1} and~\ref{Table:2}, we see that in each of the cases, we have
$$
\frac{g(r - 1, s, t + 1) + g(r - 1, s, t) + g(r, s, t - 1)}{g(r, s, t)} \leq 1
$$
except in the two cases $(c_{7})$ and $(c_{9})$.

Certainly $G_{B}$ contains no blue edges so every pair of parts in $G_{B}$ are joined by red edges. Since $G$ has no red $K_{5}$, there must be at most $4$ parts in $G_{B}$. By Fact~\ref{Fact:Deg1}, no individual part contains a red $K_{4}$ so if there is only one part in $G_{B}$, it has order at most $g(r - 1, s + 1, t - 1)$. If a part $G_{i} \subseteq G_{B}$ contains a red triangle, then there can only be one other part, which must have no red edge. In this case $|G_{B}| \leq g(r - 1, s + 1, t - 1) + g(r - 1, s, t - 1)$.  If two parts have red edges (but no red triangle), then these are the only two parts in $G_{B}$ and $|G_{B}| \leq 2g(r - 1, s, t)$. Finally if one part has red edges (but no red triangle), then there are at most $3$ parts in $G_{B}$, meaning that $|G_{B}| \leq g(r - 1, s, t) + 2g(r - 1, s, t - 1)$.
Putting these observations together, we see that
\beqs
|G_{B}| & \leq & \begin{cases}
g(r - 1, s + 1, t - 1),\\
g(r - 1, s + 1, t - 1) + g(r - 1, s, t - 1),\\
2g(r - 1, s, t),\\
g(r - 1, s, t) + 2g(r - 1, s, t - 1)
\end{cases}\\
~ & \leq & g(r - 1, s + 1, t - 1) + g(r - 1, s, t - 1).
\eeqs
This means that
\beqs
|G| & = & |G_{1}| + |G_{R}| + |G_{B}|\\
~ & \leq & g(r - 1, s, t) + g(r - 1, s, t + 1)\\
~ & ~ & + g(r - 1, s + 1, t - 1) + g(r - 1, s, t - 1)\\
~ & \leq & g(r, s, t)\\
~ & < & |G|,
\eeqs
a contradiction, completing the proof of Case~\ref{Case:4}.

\begin{case}\labelz{Case:5}
Red is among the first $r$ colors while blue is among the middle $s$ colors.
\end{case}

In this case, the graph $G$ contains no red $K_{5}$ and no blue $K_{4}$. Since $r(K_{4}, K_{5}) = 25$, we find that $4 \leq q \leq 24$. We break the proof into subcases based on the red and blue edges that appear within parts of a Gallai partition. These subcases are listed as follows.
\bd
\item{\ref{Subcase:5.1}.} No part of the partition contains any red or blue edges.
\item{\ref{Subcase:5.2}.} A part $G_{1}$ contains a red copy of $K_{3}$ and at least one blue edge. 
\item{\ref{Subcase:5.3}.} A part $G_{1}$ contains red and blue edges, but no red or blue copy of $K_{3}$.
\item{\ref{Subcase:5.4}.} A part $G_{1}$ contains a red copy of $K_{3}$ and no blue edges.
\item{\ref{Subcase:5.5}.} A part $G_{1}$ contains blue edges.
\item{\ref{Subcase:5.6}.} A part $G_{1}$ contains red edges but no red copy of $K_{3}$, and no blue edges.
\ed

We now consider these subcases.

\begin{subcase}\labelz{Subcase:5.1}
No part of the partition contains any red or blue edges.
\end{subcase}


If each part $G_{i}$ of the partition contains no red or blue edges, then this is Type T16 so by Inequality~\eqref{Ineq:T16}, we get
$\frac{g(r - 1, s - 1, t)}{g(r, s, t)} \leq \frac{1}{24}$. This means that
\beqs
|G| & = & \sum_{i = 1}^{q} |G_{i}|\\
~ & \leq & \sum_{i = 1}^{q} g(r - 1, s - 1, t)\\
~ & \leq & \sum_{i = 1}^{q} \frac{1}{24} g(r, s, t)\\
~ & = & \frac{q}{24} g(r, s, t)\\
~ & \leq & g(r, s, t),
\eeqs
a contradiction, completing the proof for Subcase~\ref{Subcase:5.1}.



\begin{subcase}\labelz{Subcase:5.2}
A part $G_{1}$ contains a red copy of $K_{3}$ and at least one blue edge.
\end{subcase}

Let $G_{R}$ (and $G_{B}$) be the set of vertices with all red (respectively blue) edges to $G_{1}$. Then $|G_{B}|$ can be bounded from above by $g(r, s - 1, t)$. 
By Inequality~\eqref{Ineq:T4}, we get
$$
\frac{g(r, s - 1, t)}{g(r, s, t)} \leq \frac{1}{3}.
$$
We can also bound $|G_{1}|$ from above by $g(r - 1, s, t + 1)$. By Inequality~\eqref{Ineq:T11}, we get
$$
\frac{g(r-1, s, t+1)}{g(r, s, t)} \leq \frac{17}{36}.
$$

Finally, since $G_R$ contains no red edges, it has at most three parts. No part has a blue $K_3$ so there can be
three parts with no blue edges or one part with blue edges and another with no blue edges. Thus, we obtain
$$
\frac{|G_{R}|}{g(r, s, t)} \leq \max \left\{3 \cdot \frac{1}{24}, \frac{1}{9} + \frac{1}{24}\right\} = \frac{11}{72}.
$$
Summarizing, we get
$$
|G|=|G_1|+|G_R|+|G_B| \leq \left(\frac{17}{36}+\frac{11}{72}+\frac{1}{3}\right)g(r,s,t) = \frac{69}{72}g(r,s,t) < g(r,s,t),
$$
a contradiction.

\begin{subcase}\labelz{Subcase:5.3}
A part $G_{1}$ contains red and blue edges, but no red or blue copy of $K_{3}$.
\end{subcase}

Then $G_r$ is of Type T11 so by Inequality~\eqref{Ineq:T11}, 
we obtain
$$
\frac{g(r-1, s, t+1)}{g(r, s, t)} \leq \frac{17}{36}.
$$
Similarly, $G_1$ is of Type T12 so by Inequality~\eqref{Ineq:T12}, 
we have
$$
\frac{g(r-1, s-1, t+2)}{g(r, s, t)} \leq \frac{2}{9}.
$$

Since $G_B$ has no blue edges, it has at most four parts. There can be four parts with no red edges, or one part with red edges and two parts with no red edges, or two parts with a red $K_3$ in one part and no red edges in the other part, or two parts with red edges, or one part with a red $K_3$. This means that
$$
\frac{|G_{B}|}{g(r, s, t)} \leq \max \left\{4 \cdot \frac{1}{24}, \frac{1}{9} + \frac{2}{24}, \frac{17}{72}+\frac{1}{24}, 2 \cdot \frac{1}{9}, \frac{17}{72}\right\}g(r,s,t) = \frac{5}{18}g(r,s,t).
$$

Summarizing we obtain
$$
|G|=|G_1|+|G_R|+|G_B| \leq \left(\frac{2}{9}+\frac{17}{36}+\frac{5}{18}\right)g(r,s,t) = \frac{35}{36}g(r,s,t) < g(r,s,t),
$$
a contradiction.




\begin{subcase}\labelz{Subcase:5.4}
A part $G_{1}$ contains a red copy of $K_{3}$ and no blue edges.
\end{subcase}

First a claim that there is only one such part.

\begin{claim}
At most one part contains a red copy of $K_3.$
\end{claim}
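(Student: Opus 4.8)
The plan is to argue by contradiction: suppose two parts, say $G_1$ and $G_2$, each contain a red copy of $K_3$. First I would record the basic properties of such a part. By Fact~\ref{Fact:Deg1} each of $w_1,w_2$ has a red neighbor in the reduced graph, so neither $G_1$ nor $G_2$ can contain a red $K_4$ (a red $K_4$ together with one vertex of a red-adjacent part would already be a red $K_5$); and since the configuration of Subcase~\ref{Subcase:5.2} has already been ruled out, a part containing a red $K_3$ contains no blue edge. Consequently $|G_1|,|G_2|\le g(r-1,s+1,t-1)$. The argument then splits on the color of the reduced edge $w_1w_2$.

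If $w_1w_2$ is red, the two internal red triangles together with the all-red join between $G_1$ and $G_2$ span a red $K_6$, hence a red $K_5$, a contradiction. So the substantive case is $w_1w_2$ blue, and the bulk of the work is there.

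For the blue case I would first pin down the reduced-graph structure. Any two red-neighbors of $w_1$ must be joined in blue (a red edge among them would extend the red triangle of $G_1$ to a red $K_5$), so the red neighborhood of $w_1$ is a blue clique, of size at most $3$ to avoid a blue $K_4$, and every part red-adjacent to $w_1$ has no red edge; the same holds for $w_2$. Next, any part $G_3$ that is blue-joined to both $w_1$ and $w_2$ cannot contain a blue edge: the two endpoints of such a blue edge, together with one vertex of each of the two red triangles, are pairwise blue and form a blue $K_4$. Likewise, two such common blue-neighbors joined in blue would, together with $w_1$ and $w_2$, force a blue $K_4$; hence the common blue-neighbors form a red clique of size at most $4$. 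These restrictions bound both the number of parts and the red/blue edges they may carry. With this limited inventory of part types in hand, I would bound each part size by the appropriate Type ratio from Tables~\ref{Table:1}--\ref{Table:4} (the two red-triangle parts by \eqref{Ineq:T10}, and the common-neighbor and red/blue-adjacent parts by the relevant $T$-inequalities), and sum to obtain $|G|<g(r,s,t)$, contradicting $|G|=g(r,s,t)$.

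The main obstacle is precisely this blue case: unlike the red case, no forbidden monochromatic clique appears automatically (indeed, two blue-joined red triangles are locally consistent as a Gallai coloring), so the contradiction cannot be purely structural and must be extracted by aggregating the inductive size bounds once the reduced-graph configuration has been sufficiently restricted. The cross-part blue-$K_4$ observation---that a part blue-adjacent to both red triangles can carry no blue edge---is the key lever that keeps the neighboring parts small enough for the weighted sum to fall below $g(r,s,t)$.
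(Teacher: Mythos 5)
Your overall strategy matches the paper's: decompose $G$ by reduced-graph adjacency to the two red-triangle parts, bound each piece by induction, and sum. Your structural observations are also correct (the red neighborhood of $w_1$ is a blue clique of at most three parts carrying no red edges; any part blue-joined to both $G_1$ and $G_2$ carries no blue edge, and these parts form a red clique of at most four). The first genuine problem is the inductive bound you assign to $G_1$ and $G_2$. In Case~\ref{Case:5} blue is one of the \emph{middle} $s$ colors, so deleting blue from a part decrements $s$, not $t$: a part with a red triangle, no red $K_4$, and no blue edge is bounded by $g(r-1,s,t)$, i.e.\ Type T12 with ratio at most $\frac{5}{26}$ by \eqref{Ineq:T12}, not by $g(r-1,s+1,t-1)$ as you write (Type T10 with ratio $\frac{17}{48}$ is the right bound in Case~\ref{Case:4}, where blue is a last-$t$ color). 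This is not cosmetic: with \eqref{Ineq:T10} the two red-triangle parts alone may occupy $\frac{17}{24}$ of $|G|$, while the three remaining regions (red neighbors of $G_1$, red neighbors of $G_2$ inside the blue neighborhood of $G_1$, common blue neighbors) contribute up to $\frac{11}{72}+\frac{1}{9}+\frac{5}{18}=\frac{13}{24}$, so your sum is $\frac{5}{4}>1$ and no contradiction follows.

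The second problem is that the summation, which is the crux, is only asserted, and the margin is thin. The paper's own execution, using a slightly weaker per-part bound of $\frac{17}{72}$ for a red-triangle part, first arrives at a total of $\frac{73}{72}>1$ and must introduce a further dichotomy (whether the red neighborhood of $G_1$ contains a blue part, which forces the common blue neighborhood to avoid a red $K_4$ and lowers its weight from $\frac{5}{18}$ to $\frac{17}{72}$) before the total drops below $g(r,s,t)$. Your extra lever --- that parts blue-joined to both $G_1$ and $G_2$ have no blue edges --- is true but does not help here: the worst case for that region is one part containing a red triangle together with one free part, a configuration that carries no blue edges anyway, so its weight bound is unchanged. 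If you replace \eqref{Ineq:T10} by \eqref{Ineq:T12} the naive sum does close (it becomes $\frac{5}{13}+\frac{13}{24}=\frac{289}{312}<1$), arguably more cleanly than in the paper, but as written the proposal neither uses the correct Type nor verifies the arithmetic, and both are essential.
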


\begin{proof}
Suppose there are two parts $G_1$ and $G_2$ containing a red copy of $K_3$. To avoid a red copy of $K_{5}$, there must be all blue edges in between $G_{1}$ and $G_{2}$. Then $G_R$, the set of vertices with red edges to $G_{1}$, is an $(R_2,B_4)$-graph, and $G_{2}$ is contained in $G_{B}$, the set of vertices with all blue edges to $G_{1}$. We also see that the set of vertices in $G_{B}$ with all red edges to $G_{1}$, $G_{BR}$ is an $(R_2,B_3)$-graph, and the set of vertices in $G_{B}$ with all blue edges to $G_{2}$, $G_{BB}$ is an $(R_5,B_2)$-graph. We deduce that
$$
w(G_R) \leq \max \left\{2 \cdot \frac{1}{24}, \frac{1}{9} + \frac{1}{24}\right\} = \frac{11}{72},
$$

$$
w(G_{BR}) \leq \max \left\{\frac{1}{9}, 2 \cdot \frac{1}{24}\right\} = \frac{1}{9},
$$

\beqs
w(G_{BB}) & \leq & \frac{|G_{B}|}{g(r, s, t)} \\
~ & \leq & \max \left\{4 \cdot \frac{1}{24}, \frac{1}{9} + \frac{2}{24}, \frac{17}{72}+\frac{1}{24}, 2 \cdot \frac{1}{9}, \frac{17}{72}\right\}g(r,s,t) \\
~ & = & \frac{5}{18}g(r,s,t).
\eeqs

We now distinguish two cases. First if $G_R$ contains a blue part, then all edges from this blue part to $G_{BB}$ are red. So $G_{BB}$ contains no red copy of $K_4$. Then
$$
w(G_{BB}) \leq \max \left\{\frac{17}{72}, \frac{1}{9} + \frac{1}{24}, 3 \cdot \frac{1}{24}\right\} = \frac{17}{72}.
$$
Thus, we obtain 
\beqs
|G| & = & |G_1|+|G_2|+|G_R|+|G_{BR}|+|G_{BB}|\\
~ & \leq & \left(2 \cdot \frac{5}{26}+\frac{17}{72}+\frac{1}{9}+\frac{17}{72}\right)g(r,s,t)\\
~ & = & \frac{151}{156}g(r,s,t)\\
~ & < & g(r,s,t),
\eeqs
a contradiction.

If $G_{BR}$ contains no blue part, then $w(G_{BR}) \leq \frac{2}{24}$, so we obtain
\beqs
|G| & = & |G_1|+|G_2|+|G_R|+|G_{BR}|+|G_{BB}|\\
~ & \leq & \left(2 \cdot \frac{17}{72}+\frac{11}{72}+\frac{2}{24}+\frac{5}{18}\right)g(r,s,t)\\
~ & = & \frac{71}{72}g(r,s,t)\\
~ & < & g(r,s,t),
\eeqs
a contradiction.
\end{proof}

Let $G_1$ be a part containing a red copy of $K_3.$ Then $G_R$ is an $(R_2,B_4)$-graph and $G_B$ is an $(R_5,B_3)$-graph. We deduce that
$$
w(G_{R}) \leq \max \left\{\frac{1}{9}+\frac{1}{24}, 3 \cdot \frac{1}{24}\right\} = \frac{11}{72},
$$
and $w(G_B) \leq \frac{5}{9}$ by Lemma~\ref{Lem:5.3}. Thus, we obtain
$$
|G|=|G_1|+|G_R|+|G_B| \leq \left(\frac{17}{72}+\frac{11}{72}+\frac{5}{9}\right)g(r,s,t) = \frac{17}{18}g(r,s,t) < g(r,s,t),
$$
a contradiction.

For the remaining subcases, we therefore know that each part $G_{i}$ can contain red (or blue) edges but no red (respectively blue) copy of $K_{3}$, and no blue (respectively red) edges.

\begin{subcase}\labelz{Subcase:5.5} 
A part $G_{1}$ contains a blue edges.
\end{subcase}

Suppose $G_{1}$ contains a blue edge. Let $G_{R}$ (or $G_{B}$) denote the set of vertices in $G \setminus G_{1}$ with red (respectively blue) edges to $G_{1}$. Let $q_{1}$ be the number of parts of the Gallai partition in $G_{R}$ and let $q_{2}$ be the number of parts of the Gallai partition in $G_{B}$. Then
$$
q_{1} \leq 17 = R(K_{4}, K_{4}) - 1
$$
since the reduced graph of $G_{R}$ contains no monochromatic copy of $K_{4}$. Similarly, $q_{2} \leq 4$ since $G_{B}$ contains no blue edges.

For the situation when $G_{R}$ contains either red or blue edges, we apply Inequality~\eqref{Ineq:T15} to get 
$|G_{R}| \leq g(r - 1, s - 1, t + 1) \leq \frac{1}{9}g(r, s, t)$.


We intend to show that
$$
|G| = |G_{1}| + |G_{R}| + |G_{B}| \leq g(r, s, t),
$$
which would be a contradiction. This would hold if we could show that
$$
|G_{1}| + |G_{B}| \leq \frac{1}{3} g(r, s, t) ~ ~ ~ \text{ and } ~ ~ ~ |G_{R}| \leq \frac{2}{3} g(r, s, t)
$$
or equivalently if
$$
\frac{|G_{1}| + |G_{B}|}{g(r, s, t)} \leq \frac{1}{3} ~ ~ ~ \text{ and } ~ ~ ~ \frac{|G_{R}|}{g(r, s, t)} \leq \frac{2}{3}.
$$

Certainly there are no blue edges within $G_{B}$ so every pair of parts in $G_{B}$ is joined entirely by red edges. Since $G$ contains no red copy of $K_{5}$, there can be at most $4$ parts of the Gallai partition in $G_{B}$.
If two parts have red edges (but certainly no red triangle in this subcase), then these are the only two parts in $G_{B}$ and if one part has red edges (but no red triangle), then there are at most $3$ parts in $G_{B}$.

This means that
$$
\frac{|G_{1}| + |G_{B}|}{g(r, s, t)} \leq \max \left\{\frac{1}{9} + 2 \cdot \frac{1}{9}, \frac{1}{9} + \frac{1}{9} + 2\cdot \frac{1}{24}, \frac{1}{9} + 4 \cdot \frac{1}{24} \right\} = \frac{3}{9} = \frac{1}{3},
$$
as desired.

Within $G_{R}$, we first note that there is no red copy of $K_{4}$ and no blue copy of $K_{4}$. We may therefore follow along with the proof of Case~\ref{Case:3} with the following arguments, cases concerning the possible values of $q_{1}$.


First suppose $q_{1} = 17$. If a part in $G_{2}$ in $G_{B}$ contains a red edge, then since $R(K_{3}, K_{4}) = 9$, there must be at most $8$ parts in $G_{R}$ with red edges to $G_{2}$ and at most $8$ parts in $G_{R}$ with blue edgs to $G_{2}$, meaning $q_{1} \leq 16$. Thus, no part in $G_{B}$ has a red edge, so
$$
\frac{|G_{R}|}{g(r, s, t)} + \frac{|G_{1}| + |G_{B}|}{g(r, s, t)} \leq \frac{q_{1}}{24} + \frac{4}{24} \leq \frac{21}{24} < 1,
$$
a contradiction. If $13 \leq q_{1} \leq 16$, then
$$
\frac{|G_{R}|}{g(r, s, t)} \leq \frac{16}{24} = \frac{2}{3},
$$
as claimed.


When $4 \leq q_{1} \leq 12$, we apply Claim~\ref{Claim:VbVr3} and note that $p_{2} = 0$. Following the proof of Subcase~\ref{Subcase:3.2}, we get the following. If $q_{1} \leq 8$, then
$$
p_{1} \cdot \frac{1}{9} + p_{0} \cdot \frac{1}{24} \leq \frac{4}{9} + \frac{4}{24} < \frac{2}{3}.
$$
If $q_{1} = 9$, then
$$
p_{1} \cdot \frac{1}{9} + p_{0} \frac{1}{24} \leq \frac{3}{9} + \frac{6}{24} = \frac{1}{3} + \frac{1}{4} < \frac{2}{3}.
$$
If $q_{1} = 10$, then $p_{1} \leq 2$ and
$$
p_{1} \cdot \frac{1}{9} + p_{0} \cdot \frac{1}{24} \leq \frac{2}{9} + \frac{8}{24} < \frac{2}{3}.
$$
Finally if $11 \leq q_{1} \leq 12$, then $p_{1} \leq 2$ and
$$
p_{1} \cdot \frac{1}{9} + p_{0} \cdot \frac{1}{24} \leq \frac{2}{9} + \frac{10}{24} = \frac{23}{36} < \frac{2}{3},
$$
completing the proof of Subcase~\ref{Subcase:5.5}.

For the remaining subcase, all parts are either free or red, and there is at least one red part.

\begin{subcase}\labelz{Subcase:5.6}
A part $G_{1}$ contains red edges but no red copy of $K_{3}$, and no blue edges.
\end{subcase}

Then $G_R$ is an $(R_3,B_4)$-graph and $G_B$ is an $(R_5,B_3)$-graph. So by Inequality~\eqref{Ineq:T15} and Lemmas~\ref{Lem:5.2} and~\ref{Lem:5.3}, we obtain
\beqs
|G|& = & |G_1|+|G_R|+|G_B|\\
~ & \leq & \max \left\{ \frac{1}{9}+\frac{1}{3}+\frac{5}{9}, \frac{1}{9} + \frac{25}{72} + \frac{39}{72} \right\} \cdot g(r,s,t) \\
~ & = & g(r,s,t) < g(r,s,t)+1,
\eeqs
a contradiction, unless $G_{R}$ and $G_{B}$ are both very specific blow-ups of the unique $2$-coloring of $K_{5}$ with no monochromatic triangle as in Lemmas~\ref{Lem:5.2} and~\ref{Lem:5.3}.

In order to avoid creating a red $K_{5}$, each part in $G_{R}$ can have red edges to at most two parts in $G_{B}$. This means that there must be at least $15$ pairs of parts (one in $G_{R}$ and one in $G_{B}$) with blue edges between them. To avoid creating a blue copy of $K_{4}$, the only way for a part in $G_{B}$ to have blue edges to three parts in $G_{R}$ is for all of those blue edges to go to the free parts in $G_{R}$. This leaves all red edges from the blue parts in $G_{R}$ to $G_{B}$, making a red copy of $K_{5}$, completing the proof of Subcase~\ref{Subcase:5.6} and Case~\ref{Case:5}.









\begin{case}\labelz{Case:6}
Red and blue are both within the first $r$ colors.
\end{case}

In this case, the graph $G$ contains no red or blue $K_{5}$. Since $r(K_{5}, K_{5}) = R + 1$, we find that $4 \leq q \leq R$. It turns out that a better bound is almost immediate.

\begin{claim}\labelz{Claim:Bigq}
If red and blue occur within the first $r$ colors, then
$$
q \leq 38.
$$
\end{claim}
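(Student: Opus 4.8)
The plan is to argue by contradiction: assume $q \geq 39$ and show that this forces \emph{every} part of the Gallai partition to be free, after which the induction hypothesis together with Inequality~\eqref{Ineq:T22} delivers a contradiction. It is worth noting up front that the naive bounds are not enough. Ramsey's theorem only gives $q \leq R$, and the per-vertex bound of Fact~\ref{Fact:LargeDegrees} by itself only yields $q - 1 = d_{r}(w_{i}) + d_{b}(w_{i}) \leq 48$; so the sharper value $38$ cannot come from the reduced graph in isolation. The extra leverage comes from combining the \emph{upper} degree bound of Fact~\ref{Fact:LargeDegrees} with Fact~\ref{Fact:SmallDegrees}.

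First I would observe that if $q \geq 39$, then for every vertex $w_{i} \in V(D)$ of the reduced graph we have $d_{r}(w_{i}) + d_{b}(w_{i}) = q - 1 \geq 38$. Since $d_{r}(w_{i}), d_{b}(w_{i}) \leq 24$ by Fact~\ref{Fact:LargeDegrees}, this forces $d_{r}(w_{i}) \geq 38 - 24 = 14$ and, symmetrically, $d_{b}(w_{i}) \geq 14$ for \emph{every} $w_{i}$. Fact~\ref{Fact:SmallDegrees} then gives $w_{i} \notin V_{r}$ and $w_{i} \notin V_{b}$ for all $i$; that is, no part $G_{i}$ contains a red edge or a blue edge. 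Hence every part is free.

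Next, since red and blue both lie among the first $r$ colors and no $G_{i}$ uses either of them internally, each part is a Gallai coloring avoiding these two $K_{5}$-colors, so by the induction hypothesis (induction on $3r + 2s + t$, which drops by $6$ when $r$ decreases by $2$) it satisfies $|G_{i}| \leq g(r - 2, s, t)$. Applying Inequality~\eqref{Ineq:T22}, namely $g(r-2,s,t)/g(r,s,t) \leq 1/R$, and summing over the $q \leq R$ parts gives
\[
|G| = \sum_{i=1}^{q} |G_{i}| \leq \frac{q}{R}\, g(r,s,t) \leq g(r,s,t) < |G|,
\]
a contradiction. Therefore $q \leq 38$.

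The one genuinely delicate point is the transition in the second paragraph. One might hope to bound $q$ purely Ramsey-theoretically, but this fails: if $R(5,5) \geq 43$ one can $2$-color $K_{39}$ with no monochromatic $K_{5}$ and all monochromatic degrees at most $24$, so such a reduced graph is combinatorially admissible. The point is that the forced lower bounds $d_{r}(w_{i}), d_{b}(w_{i}) \geq 14$ must be fed into Fact~\ref{Fact:SmallDegrees} to \emph{eliminate all internal red and blue edges}, converting a statement about the reduced graph into the structural fact that every part is free; only then does the weight inequality~\eqref{Ineq:T22} close the argument. In writing this up I would take care to verify that dropping the two first-type colors is exactly what licenses the inductive bound $|G_{i}| \leq g(r-2,s,t)$, and that $q \leq R$ (available from the start of this case) is what makes $q/R \leq 1$.
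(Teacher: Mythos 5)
Your proof is correct and follows essentially the same route as the paper: assume $q \geq 39$, use Fact~\ref{Fact:LargeDegrees} to force $d_{r}(w_{i}), d_{b}(w_{i}) \geq 14$ for every vertex of the reduced graph, apply Fact~\ref{Fact:SmallDegrees} to conclude that every part is free, and then combine the inductive bound $|G_{i}| \leq g(r-2,s,t)$ with Inequality~\eqref{Ineq:T22} and $q \leq R$ to reach the contradiction $|G| \leq g(r,s,t) < |G|$. The only difference is that you spell out the arithmetic $d_{r}(w_{i}) \geq 38 - 24 = 14$ and the role of $q \leq R$ explicitly, which the paper leaves implicit.
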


\begin{proof}
Suppose, for a contradiction, that $q \geq 39$. By Fact~\ref{Fact:LargeDegrees}, for each vertex $w_{i} \in D$, we have $d_{r}(w_{i}) \leq 24$ and $d_{b}(w_{i}) \leq 24$. Since $q = |D| \geq 39$, this means that $d_{r}(w_{i}) \geq 14$ and $d_{b}(w_{i}) \geq 14$. By Fact~\ref{Fact:SmallDegrees}, we get that $V_{r} = \emptyset$ and $V_{b} = \emptyset$, so $p_{0} = q$. By induction on the number of colors,
$$
|G_{i}| \leq \frac{1}{R} g(r, s, t)
$$
so this means that
$$
|G| \leq \frac{q}{R} g(r, s, t) < |G|,
$$
a contradiction.
\end{proof}


We break the remainder of the proof of this case into the following subcases:
\be
\item There is a part $G_{1}$ containing a red triangle and a blue triangle but no red or blue copy of $K_{4}$. 
\item There is a part $G_{1}$ containing a red edge and a blue triangle but no red triangle and no blue copy of $K_{4}$. 
\item There is a part $G_{1}$ containing a red edge and a blue edge but no red or blue triangle. So $G_1$ is an $(R_{3}, B_{3})$-part.
\item There is an $(R_{2}, B_{4})$-part $G_{1}$.
\item Each part is either a free part or a red part or a blue part.
\ee

\begin{subcase} 
There is a part $G_{1}$ containing a red triangle and a blue triangle but no red or blue copy of $K_{4}$.
\end{subcase}

If we let $G_{R}$ and $G_{B}$ be the sets of vertices with all red or respectively blue edges to $G_{1}$, then it is clear that $G_{R}$ contains no red edges and $G_{B}$ contains no blue edges. Since all edges between parts in $G_{R}$ must be blue, there can be at most $4$ parts and similar there can be at most $4$ parts in $G_{B}$. Since $G_{1}$ contains a red triangle and a blue triangle but no red or blue copy of $K_{4}$, we see from Inequality~\eqref{Ineq:T17} that
$$
\frac{|G_{1}|}{|G|} \leq \frac{18}{R}.
$$
The orders of $G_{R}$ and $G_{B}$ satisfy identical bounds so, by symmetry, we will consider only $|G_{R}|$.

If $G_{R}$ contains only one part, this part contains no red edges and perhaps some blue triangles but no blue copy of $K_{4}$ (recall that for every part $G_{i}$, there exists a part $G_{j}$ with all blue edges to $G_{i}$). By Inequality~\eqref{Ineq:T19}, this means that $|G_{R}|/|G| \leq \frac{6}{R}$.

If $G_{R}$ contains two parts, these must have all blue edges between them. Then either one of these parts contains a blue triangle and the other contains no blue edges, or each part contains blue edges but no blue triangle. In the former situation, by Inequalities~\eqref{Ineq:T22} and~\eqref{Ineq:T19}, we have
$$
\frac{|G_{R}|}{|G|} \leq \frac{6}{R} + \frac{1}{R} = \frac{7}{R}.
$$
In the latter situation, by Inequality~\eqref{Ineq:T20}, we have
$$
\frac{|G_{R}|}{|G|} \leq 2 \frac{13}{2\cdot R} = \frac{13}{R}.
$$

If $G_{R}$ contains three parts, at most one of them can contain any blue edges so, by Inequalities~\eqref{Ineq:T22} and~\eqref{Ineq:T21}, we have
$$
\frac{|G_{R}|}{|G|} \leq \frac{13}{4 \cdot R} + 2 \frac{1}{R} = \frac{21}{4\cdot R}.
$$
Finally if $G_{R}$ contains four parts, none of these may contain any blue edges so, by Inequality~\eqref{Ineq:T22}, we have
$$
\frac{|G_{R}|}{|G|} \leq 4 \frac{1}{R} = \frac{4}{R}.
$$
Putting these together, we have $\frac{|G_{R}|}{|G|} \leq \frac{7}{R}$ and so symmetrically we also get $\frac{|G_{B}|}{|G|} \leq \frac{7}{R}$. These imply that
$$
|G| = |G_{1}| + |G_{R}| + |G_{B}| \leq \frac{|G|}{R}( 18 + 7 + 7) < |G|,
$$
a contradiction, completing the proof of this subcase.



\begin{subcase} 
There is a part $G_{1}$ containing a red edge and a blue triangle but no red triangle and no blue copy of $K_{4}$.
\end{subcase}

Again let $G_{R}$ and $G_{B}$ be the sets of vertices with all red or respectively blue edges to $G_{1}$, so $G_{R}$ contains no red triangle and $G_{B}$ contains no blue edges. By Inequality~\eqref{Ineq:T18}, we see that $|G_{1}| \leq \frac{12}{R} |G|$. From the same argument as in the previous subcase, we see that $|G_{B}| \leq \frac{7}{R} |G|$. By Inequality~\eqref{Ineq:T11}, we see that $|G_{R}| \leq \frac{5}{13} |G|$.
Putting all these together, we get
$$
|G| = |G_{1}| + |G_{B}| + |G_{R}| \leq \left(\frac{12}{R} + \frac{7}{R} + \frac{5}{13}\right)|G| < |G|,
$$
a contradiction, completing the proof of this subcase.



\begin{subcase}
There is an $(R_{3}, B_{3})$-part $G_{1}$.
\end{subcase}

With $G_{R}$ and $G_{B}$ being the sets of vertices with red or blue edges respectively to $G_{1}$, we consider several possible situations. We further break into cases based on the surrounding structures.

\begin{subsubcase}
No other part contains red or blue edges.
\end{subsubcase}

Then since $G_{B}$ contains no blue triangle and no red copy of $K_{5}$, we see that $G_{B}$ contains at most $R(K_{3}, K_{5}) - 1 = 13$ parts of the Gallai partition and similarly $G_{R}$ also contains at most $13$ parts of the Gallai partition. This means that
$$
|G| = |G_{1}| + |G_{B}| + |G_{R}| \leq \frac{|G|}{R} \left( \frac{13}{2} + 13 + 13\right) = \frac{32.5|G|}{R} < |G|,
$$
a contradiction.

\begin{subsubcase}
There is a $(R_{2}, B_{4})$-part $G_{2}$.
\end{subsubcase}

In order to avoid creating a blue copy of $K_{5}$, all edges from $G_{1}$ to $G_{2}$ must be red. Let $F_{2B}$ denote the set of vertices with blue edges to $G_{2}$, let $F_{1R}$ denote any remaining vertices with red edges to $G_{1}$ and let $F_{1B}$ denote the set of vertices with blue edges to $G_{1}$. Note that $F_{2B}$ contains no blue edges, $F_{1R}$ contains no red edges since both $G_{1}$ and $G_{2}$ have all red edges to $F_{1R}$, and $F_{1B}$ contains no blue triangle and no red copy of $K_{4}$. This means that
\beqs
|G| & = & |G_{1}| + |G_{2}| + |F_{1B}| + |F_{1R}| + |F_{2B}|\\
~ & \leq & \frac{|G|}{R} \left( \frac{13}{2} + 6 + 12 + 7 + 7 \right)\\
~ & = & \frac{38.5|G|}{R} < |G|,
\eeqs
a contradiction.

\begin{subsubcase}
There is another $(R_{3}, B_{3})$-part $G_{2}$.
\end{subsubcase}

Without loss of generality, suppose the edges between $G_{1}$ and $G_{2}$ are red. If we let $F_{1R}$ denote the set of vertices with red edges to $G_{1}$, then all of $F_{1R}$ must have blue edges to $G_{2}$. Let $F_{1B}$ be the remaining vertices, those with blue edges to $G_{1}$. Then $F_{1B}$ contains no blue triangle and $F_{1R}$ contains no blue or red triangle. This means that
\beqs
|G| & = & |G_{1}| + |G_{2}| + |F_{1R}| + |F_{1B}|\\
~ & \leq & \frac{|G|}{R} \left( \frac{13}{2} + \frac{13}{2} + \frac{13}{2}\right) + \frac{5}{13}|G|\\
~ & \leq & \left( \frac{18.5}{R} + \frac{5}{13} \right) |G| < |G|,
\eeqs
a contradiction.


\begin{subsubcase} 
Every other part is either free, red or blue.
\end{subsubcase}

Note that $G_R$ is an $(R_3,B_5)$-graph and $G_B$ is an $(R_5,B_3)$-graph. So by Lemma~\ref{Lem:6.3}, we obtain
$$
|G| = |G_{1}| + |G_{R}| + |G_{B}| \leq \left( \frac{13}{2R}|G| + 2 \cdot \frac{16.25}{R}\right) g(r,s,t) = \frac{39}{R} g(r,s,t) < g(r,s,t),
$$
a contradiction, completing the proof of this subsubcase.

\begin{subcase} 
There is an $(R_{2}, B_{4})$-part $G_{1}$. 
\end{subcase}

With $G_{R}$ and $G_{B}$ being the sets of vertices with red or blue edges respectively to $G_{1}$, the graph induced on $G_{R}$ contains no blue edge. We consider several possible situations and further break into cases based on the surrounding structures.

\begin{subsubcase}
No part in $G_{R}$ contains red or blue edges.
\end{subsubcase}

Then since $G_{B}$ contains no blue edges and no red copy of $K_{5}$, we see that $G_{B}$ contains at most $R(K_{2}, K_{5}) - 1 = 4$ parts of the Gallai partition and similarly $G_{R}$ also contains at most $R(K_{4}, K_{5}) - 1 = 24$ parts of the Gallai partition. This means that
$$
|G| = |X_{1}| + |G_{B}| + |G_{R}| \leq \frac{|G|}{R} \left( 6 + 4 + 24 \right) = \frac{34|G|}{R} < |G|,
$$
a contradiction.

\begin{subsubcase}\labelz{Case:6.4.1}
There is an $(R_{4}, B_{2})$-part $G_{2}$ in $G_{R}$.
\end{subsubcase}

Without loss of generality, suppose the edges between $G_{1}$ and $G_{2}$ are all red. Let $F_{1R}$ be the set of vertices (other than $G_{2}$) with red edges to $G_{1}$ and let $F_{1B}$ be the set of vertices with blue edges to $G_{1}$. Then $F_{1B}$ contains no blue edges and $F_{1R}$ must have blue edges to $G_{2}$ to avoid creating a red copy of $K_{5}$ so $F_{1R}$ contains no red or blue copy of $K_{4}$. This means that
\beqs
|G| & = & |G_{1}| + |G_{2}| + |F_{1B}| + |F_{1R}|\\
~ & \leq & \frac{|G|}{R} (6 + 6 + 7 + 18)\\
~ & = & \frac{37}{R} |G| < |G|,
\eeqs
a contradiction.

\begin{subsubcase}
There is an $(R_{3}, B_{2})$-part $G_{2}$ in $G_{R}$.
\end{subsubcase}

Let $G_{RR}$ be the set of vertices in $G_{R}$ with red edges to $G_{2}$ and let $G_{RB}$ be those vertices in $G_{R}$ with blue edges to $G_{2}$. Then $G_{B}$ contains no blue edges, $G_{RR}$ contains no red edges, and $G_{RB}$ contains no red or blue copy of $K_{4}$. This means that
\beqs
|G| & = & |G_{1}| + |G_{2}| + |G_{B}| + |G_{RR}| + |G_{RB}|\\
~ & \leq & \frac{|G|}{R} \left(6 + \frac{13}{4} + 7 + 7 + 18\right)\\
~ & = & \frac{41.25}{R} |G| < |G|,
\eeqs
a contradiction.


\begin{subsubcase}
There is an $(R_{2}, B_{3})$-part $G_{2}$ in $G_{R}$.
\end{subsubcase}

Note that we may assume that $G_{B}$ contains no red triangle since if it did, this structure would be symmetric to the assumed structure considered in Subcase~\ref{Case:6.4.1}.

Let $G_{RR}$ denote the set of vertices in $G_{R}$ with red edges to $G_{2}$ and let $G_{RB}$ denote the set of vertices in $G_{R}$ with blue edges to $G_{2}$. Then $G_{B}$ contains no blue edges and $G_{RB}$ contains no red $K_{4}$ and no blue triangle.

Hence $G_{RR}$ is an $(R_3,B_5)$-graph and $G_{RB}$ is a $(R4,B3)$-graph. Using Lemmas~\ref{Lem:6.2} and~\ref{Lem:6.3}, we obtain
\beqs
|G| & = & |G_{1}| + |G_{2}| + |G_{B}| + |G_{RR}| + |G_{RB}|\\
~ & \leq & \frac{|G|}{R} \left( 6 + \frac{13}{4} + 7 + 16.25 + 9.75\right)\\
~ & = & \frac{42.25}{R}|G| < |G|,
\eeqs
a contradiction.

\begin{subsubcase}\labelz{Case:6.4.5}
There is an $(R_{2}, B_{4})$-part $G_{2}.$
\end{subsubcase}

Then all edges between $G_{1}$ and $G_{2}$ are red. Let $F_{1B}$ be the set of vertices with blue edges to $G_{1},$ let $F_{RR}$ be the set of vertices with red edges to both
$G_{1}$  and $G_2,$ and let $F_{RB}$ be the set of vertices with red edges to $G_{1}$ and blue edges to$G_2.$ Then $F_{1B}$ contains no blue edges and
$F_{RB}$ contains no blue edges and no red $K_4.$ If $F_{RR}$ contains no blue $K_3,$ then $F_{RR}$ is an $(R_3,B_5)$-graph.
This means that
\beqs
|G| & = & |G_{1}| + |G_{2}| + |F_{1B}| + |F_{1R}| + |F_{RR}|\\
~ & \leq & \frac{|G|}{R} (6 + 6 + 7 + 16.25) + 6\\
~ & = & \frac{41.25}{R} |G| < |G|,
\eeqs
a contradiction.

Suppose there is an $(R_{2}, B_{4})$-part $G_{3}$ in $F_{RR}.$ Repeating above arguments leads to
\beqs
|G| & = & |G_{1}| + |G_{2}| + |G_3| + |F_{1B}| + |F_{1R}| + |F_{3R}| + |F_{3B}|\\
~ & \leq & \frac{|G|}{R} (4 \cdot 6 + 2 \cdot + 7 + 3.25) \\
~ & = & \frac{41.25}{R} |G| < |G|,
\eeqs
a contradiction.







\begin{subcase} 
Each part is either a free part or a red part or a blue part.
\end{subcase}

First suppose that $G$ has exactly one non-free part. Let $G_1$ be this part containing red edges.
Then $G_R$ is an $(R_3,B_5)$-graph and $G_B$ is an $(R_5,B_4)$-graph. So we obtain

$$
|G| \leq \frac{1}{R}\left(\frac{13}{4} + 13 + 24\right) = \frac{40.25}{R} \leq \frac{40.25}{43} < 1,
$$
a contradiction.

Hence we may assume that $G$ contains at least two non-free parts, say $G_1$ and $G_2$. If
$G_1$ and $G_2$ both contain red edges, and $G_1$ and $G_2$ are joined by blue edges, then we call this
a RBR-pair. Analogously, RRR-pairs, RRB-pairs (BBR-pairs), BBB-pairs, BRB-pairs, and BBR-pairs (RBB)-pairs are defined.

\begin{claim}
$G$ contains an RRR-pair or a BBB-pair.
\end{claim}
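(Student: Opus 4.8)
The plan is to argue by contradiction: suppose $G$ contains neither an RRR-pair nor a BBB-pair. The first step is to pin down the coarse structure forced by this assumption. Since no two red parts are joined by a red edge, the red parts are pairwise joined by blue edges in the reduced graph $D$; as $G$ (hence $D$) has no blue $K_5$, they form a blue clique of size at most $4$. Symmetrically there are at most four blue parts, pairwise joined by red edges. Thus the non-free parts split into a blue clique of $\le 4$ red parts and a red clique of $\le 4$ blue parts, and we already know from Claim~\ref{Claim:Bigq} that $q \le 38$ overall.

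Next I would record the extension facts that promote an internal monochromatic edge of a non-free part to a large monochromatic clique. A red part contains a red $K_2$, so if it were red-joined in $D$ to a red $K_3$ we would obtain a red $K_5$; hence the red-neighborhood of any red part contains no red $K_3$ and is an $(R_3,B_5)$-graph, while its blue-neighborhood contains no blue $K_4$ (a blue $K_4$ there together with one vertex of the red part is a blue $K_5$) and is therefore an $(R_5,B_4)$-graph. The symmetric statements hold for blue parts, and similarly no free part may be monochromatically joined to a monochromatic $K_4$ of the same color. These facts, combined with the observation that (by no-RRR/BBB) every other red part lies in the blue-neighborhood of a fixed red part, heavily constrain how red parts, blue parts, and free parts may attach to one another.

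Fixing one non-free part $G_1$, say a red part, I would then split $G\setminus G_1$ into $G_R$ and $G_B$ (the vertices with all red, respectively all blue, edges to $G_1$), with $G_R$ an $(R_3,B_5)$-graph and $G_B$ an $(R_5,B_4)$-graph, and bound $w_{5,5}(G_R)$ and $w_{5,5}(G_B)$ by decomposing each along a further Gallai partition and applying Lemmas~\ref{Lem:6.1}--\ref{Lem:6.3} to the resulting $(R_3,B_3)$-, $(R_3,B_4)$-, and $(R_5,B_3)$-pieces; together with $|G_1| \le \frac{13}{4R}|G|$ this should drive the total coefficient strictly below $R$, forcing $|G| < g(r,s,t)$, the desired contradiction. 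I expect the main obstacle to be exactly this weighting step: there is no off-the-shelf bound for $(R_3,B_5)$- or $(R_5,B_4)$-graphs, and the genuinely tight configurations are the \emph{balanced} ones in which four red parts (a blue clique) and four blue parts (a red clique) are cross-joined so as to just avoid a monochromatic $K_5$. The crux is to show that once enough free parts are appended to such a skeleton to reach the required order $g(r,s,t)$, a monochromatic $K_5$ becomes unavoidable -- equivalently, that the simultaneous absence of an RRR-pair and a BBB-pair is precisely what pushes the neighborhood weights below the threshold. This is where the hypothesis must be used in full force, rather than only through the clique-size bounds of the first step.
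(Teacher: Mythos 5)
Your setup is sound: arguing by contradiction, noting that under no-RRR/no-BBB the red parts form a blue clique of size at most $4$ and the blue parts a red clique of size at most $4$, and identifying $G_R$ as an $(R_3,B_5)$-graph and $G_B$ as an $(R_5,B_4)$-graph for a red anchor part $G_1$ are all correct and consistent with the paper. But the argument stops exactly where the work begins, and you say so yourself: the ``weighting step'' that you defer is the entire content of the claim. Moreover, the single-anchor decomposition you propose does not appear to have enough resolution to close. With only one anchor, $G_B$ is an $(R_5,B_4)$-graph for which no lemma in the paper (and no obvious extension of Lemmas~\ref{Lem:6.1}--\ref{Lem:6.3}) gives a usable bound: under your hypotheses $G_B$ may still contain up to three red parts (pairwise blue-joined) and several blue parts alongside free parts, and summing the Type~\eqref{Ineq:T21}/\eqref{Ineq:T22} weights of such a configuration, together with $w(G_R)$ and $w(G_1)$, lands well above $\frac{43}{R}$. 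So the contradiction does not follow from the pieces you have assembled.

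What the paper actually does is anchor on a \emph{pair} of non-free parts and case on the pair's type. If an RBR-pair exists, $G$ is split into $G_R$, $G_{BR}$, $G_{BB}$ relative to both $G_1$ and $G_2$; the no-RRR hypothesis forces $G_R$ and $G_{BR}$ to have no red parts and the no-BBB hypothesis caps the blue parts of $G_R$ and $G_{BB}$ at two, which is precisely what unlocks Lemma~\ref{Lem:6.2}(ii) (the $\frac{9.5}{R}$ bound) and Lemma~\ref{Lem:6.3}(v) (the $\frac{13.5}{R}$ bound), giving a total of exactly $\frac{43}{R}$. If no RBR- or BRB-pair exists but an RRB-pair does, the split into $G_{RR}$, $G_{RB}$, $G_B$ with the even stronger constraints (no red parts anywhere, at most one blue part in $G_{RR}$ and $G_B$) again totals $\frac{43}{R}$. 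This two-anchor, pair-type case analysis is the missing idea; your closing paragraph correctly diagnoses that the hypothesis ``must be used in full force'' but does not supply the mechanism by which it is.
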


\begin{proof}
Suppose not. First assume that $G$ contains a RBR-pair. So let $G_1$ and $G_2$ contain red edges and $G_1$ and $G_2$
are joined by blue edges. Then $G_R$ is an $(R_3,B_5)$-graph, $G_{BB}$ is an $(R_5,B_3)$-graph and $G_{BR}$ is an $(R_3,B_4)$-graph.
Since there is no RRR-pair, both $G_R$ and $G_{BR}$ contain no red parts. Hence $w(G_{BR}) \leq \frac{9.5}{R}$ by Lemma~\ref{Lem:6.2} (ii).
Now $G_R$ can have at most two blue parts, since otherwise there is a BBB-pair. Now by Lemma~\ref{Lem:6.3}, we conclude that
$w(G_{R}) \leq \frac{13.5}{R}.$ Using the same arguments, we conclude that $w(G_{BB}) \leq \frac{13.5}{R}.$ Thus, we obtain
$$
|G| \leq \frac{1}{R}\left(2 \cdot \frac{13}{4} + 2 \cdot 13.5  + 9.5\right)g(r,s,t) = \frac{43}{R}g(r,s,t) \leq g(r,s,t),
$$
a contradiction.

Hence we may assume that $G$ contains no RBR-pair, no BRB-pair, but a RRB-pair (BBR-pair). Let $G_1$ contain red edges, let $G_2$ contain blue edges and $G_1$ and $G_2$
are joined by red edges. Then $G_{RR}$ is an $(R_2,B_5)$-graph, $G_{RB}$ is an $(R_3,B_3)$-graph and $G_{B}$ is an $(R_5,B_4)$-graph.

By the assumptions there are no red parts in $G_{RR}, G_{RB}$ and $G_B,$ and no blue part in $G_{RB}.$ Furthermore, $G_{RR}$ and $G_B$
have at most one blue part by the assumption. So we conclude that

$$
w(G_{RR}) \leq \max \left\{4 \cdot \frac{1}{R}, \frac{13}{4R} + 2 \cdot \frac{1}{R}, 3 \cdot \frac{1}{R}\right\} = \frac{5.25}{R},
$$
$$
w(G_{RB}) \leq (R(3,3)-1)) \cdot \frac{1}{R} = \frac{5}{R},
$$
$$
w(G_{B}) \leq \max \left\{24 \cdot \frac{1}{R}, (24-1) \cdot \frac{1}{R} + \frac{13}{4R}\right\} = \frac{26.25}{R}.
$$
Thus, we obtain
$$
|G| \leq \frac{1}{R}\left(2 \cdot \frac{13}{4} + 5.25 + 5   + 26.25\right)g(r,s,t) = \frac{43}{R}g(r,s,t) \leq g(r,s,t),
$$
a contradiction.
\end{proof}

We now consider a RRR-pair, say $G_1$ and $G_2$ each red parts and joined by red edges. Then $G_R$ is an $(R_3,B_4)$-graph. Suppose first that there is no other red part in $G_B.$ If $G_B$ contains no blue part, then
$$
|G| = |G_1| + |G_2| + |G_{R}| + |G_{B}| \leq \frac{|G|}{R} \left(3.25 + 3.25 + 9.75 + 24 \right) = \frac{40.25|G|}{R} < |G|,
$$
a contradiction.

Suppose next that there is blue part $G_3$ in $G_B.$ Let $F_2$ be the set of parts which are joined by blue edges with
$G_2$ and $G_3,$ and let $F_3$ be the set of parts which are joined by blue edges with
$G_2$ and by red edges with $G_3.$ Then $F_2$ is an $(R_5,B_2)$-graph and $F_3$ is an $(R_4,B_4)$-graph.
Suppose $F_3$ contains no blue part, then
\beqs
|G| & = & |G_1| + |G_2| + |G_{R}| + |G_3| + |F_2| + |F_3|\\
~ & \leq & \frac{|G|}{R} \left(3.25 + 3.25 + 9.75 + 3.25 + 4 \cdot 1 + 17 \right)\\
~ & = & \frac{40.5|G|}{R} < |G|,
\eeqs
a contradiction. Now suppose that $F_3$ contains a blue part $G_4.$ Let $G_{4R}$ and $G_{4B}$
denote the parts joined by red or blue edges with $G_4.$ Then $G_{4R}$ is an $(R_3,B_4)$-graph and
$G_{4B}$ is an $(R_4,B_2)$-graph. So we obtain
\beqs
|G| & = & |G_1| + |G_2| + |G_{R}| + |G_3| + |F_2| + |G_4| + |G_{4R}| + |G_{4B}|\\
~ & \leq & \frac{|G|}{R} \left(3.25 + 3.25 + 9.75 + 3.25 + 4 \cdot 1 + 3.25 + 6.5 + 3 \cdot 1 \right)\\
~ & = & \frac{36.25|G|}{R} < |G|,
\eeqs
a contradiction.

Hence we may assume that there is a red part $G_3$ in $G_B.$ Let $F_1$ be the set of parts which are joined by red edges with $G_1$ and
by blue edges with $G_2,$ let $F_2$ be the set of parts which are joined by blue edges with
$G_2$ and $G_3,$ and let $F_3$ be the set of parts which are joined by blue edges with
$G_2$ and by red edges with $G_3.$ Then $F_1$ is an $(R_3,B_4)$-graph, $F_2$ is an $(R_5,B_2)$-graph, and $F_3$ is an $(R_4,B_4)$-graph.
By Lemma~\ref{Lem:6.2} we have $w(F_i) \leq \frac{9.75}{R}$ for $i=1,3.$ By Lemma~\ref{Lem:6.3}, if (i) or (ii) or (iii) holds, then
$w(F_2) \leq \frac{13.5}{R}$, so
\beqs
|G| & = & |G_1| + |G_2| + |G_3| + |F_1| + |F_2| + |F_3| \\
~ & \leq & \frac{|G|}{R} \left(3 \cdot 3.25 + 2 \cdot 9.75 + 13.5 \right)\\
~ & = & \frac{42.75|G|}{R} < |G|,
\eeqs
a contradiction.

Hence $F_2$ contains two red parts $G_3$ and $G_4$ joined by red edges.

Suppose first that $G_1$ and $G_4$ as well as $G_2$ and $G_3$ are joined by red edges.
Let $F_1$ be the set of parts which are joined by red edges with $G_1$ and
by blue edges with $G_2$ and $G_4,$ let $F_2$ be the set of parts which are joined by red edges with $G_3$ and by blue edges with
$G_2$ and $G_4,$ and let $F_3$ be the set of parts which are joined by blue edges with
$G_1$ and $G_3.$ Then $F_1$ and $F_2$ are $(R_3,B_3)$-graphs and $F_3$ is an $(R_5,B_3)$-graph.

By Lemmas~\ref{Lem:6.1} and~\ref{Lem:6.3}, we obtain
$$
|G| = \sum_{i=1}^4|G_i| + \sum_{j=1}^3|F_j| \leq \frac{|G|}{R} \left(4 \cdot 3.25 + 2 \cdot 6.5 + 16.25 \right) = \frac{42.25|G|}{R} < |G|,
$$
a contradiction.

Hence we may assume that $G_1$ is joined by blue edges with $G_3$ and $G_4.$ By Lemma~\ref{Lem:6.2}, we have $w(F_i) \leq \frac{9.75}{R}$ for $i=1,3.$ By Lemma~\ref{Lem:6.3}, if (i) or (ii) or (iii) holds, then
$w(F_2) \leq \frac{13.5}{R}.$
So we obtain
\beqs
|G| & = & |G_1| + |G_2| + |G_3| + |F_1| + |F_2| + |F_3|\\
~ & \leq & \frac{|G|}{R} \left(3 \cdot 3.25 + 2 \cdot 9.75 + 13.5 \right)\\
~ & = & \frac{42.75|G|}{R} < |G|,
\eeqs
a contradiction.

Hence $F_2$ contains two red parts joined by red edges. By symmetry, replacing $G_4$ by $G_3,$ we obtain by Lemma~\ref{Lem:6.3} (ii) that $G_3$ also contains
no blue parts.

Now we consider the subgraph $H$ spanned by $G_3, G_4, F_2$ and $F_3.$ These parts are all adjacent in blue to $G_1.$
If $w(H) \leq \frac{26.75}{R},$ then we obtain
$$
|G| = |G_1| + |G_2| + |F_1| + |H| \leq \frac{|G|}{R} \left(2 \cdot 3.25 + 9.75 + 26.75 \right) = \frac{43|G|}{R} \leq |G|,
$$
a contradiction.

Next we observe that $F_3$ is an $(R_3,B_3)$-graph. If $w(F_3) \leq 4,$ then we obtain
$$
w(H) \leq \frac{1}{R} \left(2 \cdot 3.25 + 5 \cdot 3.25 + 4 \right) = \frac{26.75}{R},
$$
which gives a contradiction as before.

So we may assume that $w(F_3) > 4.$ Now we obtain the following two final cases:

(i) $H$ contains nine red parts. \\
Since $R(3,4)=9,$ there is a blue $K_4$ or a red $K_3$ leading to a red $K_6,$ a contradiction.

(ii) $H$ contains eight red parts and a free part. Now contract every red part to a red vertex, we obtain
a graph $H'$ with eight red vertices and a vertex. Now $R(3,4)=9$ gives a blue $K_4$ or a red $K_3$ with at least two red vertices
implying that there is a red complete subgraph with at least $2 \cdot 2 + 1 = 5$ vertices, a contradiction, completing the proof of Case~\ref{Case:6} and Theorem~\ref{Thm:grK5}.
\end{proof}

\bibliography{../GR-Ref.bib}
\bibliographystyle{plain}

\end{document}